\newcommand*\patchAmsMathEnvironmentForLineno[1]{%
  \expandafter\let\csname old#1\expandafter\endcsname\csname #1\endcsname
  \expandafter\let\csname oldend#1\expandafter\endcsname\csname end#1\endcsname
  \renewenvironment{#1}%
     {\linenomath\csname old#1\endcsname}%
     {\csname oldend#1\endcsname\endlinenomath}}%
\newcommand*\patchBothAmsMathEnvironmentsForLineno[1]{%
  \patchAmsMathEnvironmentForLineno{#1}%
  \patchAmsMathEnvironmentForLineno{#1*}}%
\newtheorem{thm}{Theorem}[section] % number like 3.1, 3.2, 3.3, etc.
\newtheorem{defn}[thm]{Definition} % numbered with thm
\newtheorem{prop}[thm]{Proposition}
\newtheorem{cor}[thm]{Corollary}
\newtheorem{lem}[thm]{Lemma}
\newtheorem{rem}[thm]{Remark}
\newtheorem{example}[thm]{Example}
\def\Aut{{\rm Aut}}
\def\Hom{{\rm Hom}}
\def\Ker{{\rm Ker}}
\def\sign{{\rm sign}}
\def\Spec{{\rm Spec\,}}
\def\C{{\mathbb C}}
\def\F{{\mathbb F}}
\def\N{{\mathbb N}}
\def\Q{{\mathbb Q}}
\def\R{{\mathbb R}}
\def\Z{{\mathbb Z}}
\def\cE{{\mathcal E}}
\def\cO{{\mathcal O}}
\def\cT{{\mathcal T}}
\def\trop{{\R^\flat}}
\def\troc{{\cT\C}}
\def\cfl{{\C^\flat}}
\def\qqq{\,,\ \forall}
\newcommand{\ie}{{\it i.e.\/}\ }
\newcommand{\eg}{{\it e.g.\/}\ }
\newcommand{\cf}{{\it cf.\/}\ }
\def\sin{{{\rm sin}}}
\def\cos{{{\rm cos}}}
\def\ker{{\mbox{Ker}}}
\def\Hom {{\mbox{Hom}}}
\def\ffa{\mathfrak{a}}
\def\ffm{\mathfrak{m}}
\def\rmax{\R_+^{\rm max}}
\def\te{Teichm\"uller }
\def\trop{{\R^\flat}}
\def\sign{\mathbf{S}}
\newcommand{\Cp}{\mathbb{C}_p}
\newcommand{\eps}{\varepsilon}
\newcommand{\ra}{\rightarrow}
\newcommand{\topf}{\cE}
\newcommand{\OO}{\mathcal{O}}
\newcommand{\etplus}{\OO_F}
\newcommand{\btplusarc}{{B^{b,\,+}_\infty}}
\newcommand{\jj}{J}
\newcommand{\btpluszero}{{B^{b,\,+}_{\infty,\,0}}}
\newcommand{\mik}{\mathcal{M}}
\newcommand{\mikfield}{\mathfrak{M}}
\def\wittO{W_{\cO_K}(\cO_F)}
\def\wittOovpi{B^{b,+}}
\def\inverta{B^b}
\def\rigid{B^+}
\newcommand{\ai}{{\rm Ai}}
\newcommand{\bi}{{\rm Bi}}
\def\arf{\mathbf{F}}
\def\varh{T}
\begin{document}

%\title{The real archimedean Witt construction}
\title{Universal thickening  of the field of real numbers}
\author{Alain Connes}
\email{alain@connes.org}
\address{Coll\`ege de France,
3 rue d'Ulm, Paris F-75005 France\newline
I.H.E.S. and Ohio State University.}
\author{Caterina Consani}
\email{kc@math.jhu.edu}
\address{Department of Mathematics, The Johns Hopkins
University\newline Baltimore, MD 21218 USA.}
\dedication{To the memory of M.~Krasner, in recognition of his farsightedness.}
\classification{13F35, 11F85, 11S15, 11S20, 44A40}
\keywords{Witt vectors, hyperfields, rings of periods.}
\thanks{ The second author was partially supported by the NSF grant DMS 1069218
and would like to thank the Coll\`ege de France for some financial support.
}
%This is the abstract\newline (see \textbf{elsdoc.pdf} on the svn and/or one of the sample %files
%\code{cmguide1.pdf} or \code{cmguide1.ps} available at
%\url{http://www.compositio.nl/cmauthor.html}).

\begin{abstract} We define the universal  thickening of the field of real numbers. This construction is performed in three steps which parallel the universal perfection, the Witt construction and a completion process. We show that the transposition of the perfection process at the real archimedean place is identical to the ``dequantization" process and yields Viro's tropical real hyperfield $\trop$. Then, we prove that the archimedean Witt construction in the context of hyperfields allows one to recover a field from a hyperfield, and we obtain the universal pro-infinitesimal thickening $\R_\infty$ of $\R$. Finally, we provide the real analogues of several algebras used in the construction of the rings of $p$-adic periods. We supply the canonical decomposition of elements in terms of \te lifts, we make the link with the Mikusinski field of operational calculus and compute the Gelfand spectrum of the archimedean counterparts of the rings of $p$-adic periods. In the second part of the paper we also discuss the complex case and its relation with the theory of oscillatory integrals in quantum physics.
\end{abstract}

\maketitle

%\vspace*{2pt}
\tableofcontents  % for this guide only.
% A table of contents should normally not be included

\section{Introduction}
\label{sec:introduction}

%This paper initiates the study of the archimedean counterpart of  the rings of periods in $p$-adic Hodge 
%theory. 

This paper establishes an analogue of the construction of  the rings of periods of $p$-adic Hodge theory (\cf\eg \cite{F2,F3,F4})
when a $p$-adic field is replaced by  the field $\R$ of real numbers. We show that the original ideas of M. Krasner, which were motivated by the correspondence he first unveiled between  Galois theories in unequal characteristics (\cite{Kr}), reappear unavoidably when the above analogy is developed. 
The interest in pursuing this construction is enhanced by our recent discovery of the {\em arithmetic site}  \cite{CCas} with its structure sheaf of semirings of characteristic $1$,  whose geometric points involve in a crucial manner the tropical semifield $\rmax$. The encounter of a structure of characteristic $1$ which is deeply related to the non-commutative geometric approach to the Riemann Hypothesis has motivated our search for the replacement of the $p$-adic constructions at the real archimedean place.\newline
We recall that the definition of the rings of $p$-adic periods is based on three main steps.
The first process (universal perfection) is a functorial construction which links a $p$-perfect field $L$ of characteristic zero (\eg the field $\C_p$ of $p$-adic complex numbers) to a perfect  field $F(L)$ of characteristic $p$ (\cf Appendix \ref{uniperf} for notations). This process uses, in a fundamental way, the properties of the $p$-adic topology and is based on extractions of roots of order $p$. Elements of $F(L)$ are sequences $x=(x^{(n)})_{n\geq 0}$, $x^{(n)}\in L$ which satisfy the condition: $(x^{(n+1)})^p=x^{(n)}$, for all $n\in\Z_{\ge 0}$. The product in $F(L)$ is defined by the rule $(xy)^{(n)}=x^{(n)} y^{(n)}$. The definition of the sum is less obvious:
\begin{equation}\label{defnsump}
    (x+y)^{(n)}=\lim_{m\to \infty}(x^{(n+m)}+y^{(n+m)})^{p^m}
\end{equation}
and the existence of the limit is ensured by properties of the $p$-adic topology. The map $\varphi(x)=x^p$ is an automorphism by construction.\newline
The second step is the $p$-isotypical Witt construction which defines a functorial process lifting back from characteristic $p$ to characteristic zero (\cf Appendix \ref{FW}).\newline
Finally, in the third step one defines the various rings of periods $B$, by making use of:
\begin{itemize}
  \item The integer ring $\cO_F\subset F(\C_p)$ and other natural rings obtained from it.
  \item The canonical covering homomorphism $\theta: W(\cO_F)\to \C_p$ (\cf Appendix~\ref{algebras}).
  \item Natural norms of $p$-adic type and corresponding completions.
\end{itemize}
These constructions then provide, for each ring of periods, a functor from the category of $p$-adic Galois representations to a category of  modules  whose definition no longer involves the original (absolute) Galois group but trades it for an action of the Frobenius $\varphi$, a differential operator etc. There is a very rich literature covering all these topics, starting of course with the seminal, afore mentioned papers of J-M. Fontaine; we refer to \cite{B} for a readable introductory overview.

For the field $\R$ of real numbers, Galois theory is of little help since
$\Aut(\R)$ is the trivial group and $\Aut_\R(\C)=\Z/2\Z$ is too small. However, the point that we want to emphasize in this paper is that the transposition of the above three steps is still meaningful and yields non-trivial, relevant rings endowed with a {\em canonical} one parameter group of automorphisms $\arf_\lambda$, $\lambda\in\R_+^\times$, which replaces the Frobenius $\varphi$. More precisely, the analogues of the above three steps are:
\begin{enumerate}
  \item A dequantization process, from fields to hyperfields.
  \item An extension ($W$-models) of the Witt construction that lifts back structures from hyperfields to fields.
  \item Various completion processes which yield the relevant Banach and Frechet algebras.
\end{enumerate}
  We discovered that the two apparently unrelated processes of dequantization on one side
  (\cf \cite{Lit}) and the direct transposition of the perfection process to the real archimedean place on the other, are in fact identical. The  perfection process, in the case of the local field $\R$, starts  by considering  the set $F(\R)$ made by sequences $x=(x_n)_{n\geq 0}$, $x_n\in \R$, which satisfy the condition
$x_{n+1}^\kappa=x_n$ for all $n\in\Z_{\ge 0}$. Here, $\kappa$ is a  fixed  positive  {\em odd} rational number  (\ie $|\kappa|_2=1$) such that $|\kappa|_\infty<1$. In Section~\ref{sec:1stsection} we prove that by applying the same algebraic rules as in the construction of the field $F(L)$ in the $p$-adic case, one inevitably obtains a hyperfield (in the sense of M.~Krasner)  $\trop$. The hyper-structure on $\trop$  is perfect, independent of the choice of $\kappa$ and it turns out that $\trop$ coincides with the tropical real hyperfield introduced by O.~Viro in \cite{V} as the dequantization of $\R$. The main relevant feature of the hyperfield $\trop$ is to be no longer rigid (unlike the field $\R$) and some of its properties are summarized as follows

\vspace{.05in}

\textbf{Theorem}~{\em
$(i)$~$\trop$ is a perfect hyperfield of characteristic one, \ie $x+x=x$, $\forall x\in \trop$ and  for any odd integer $n>0$, the map $\trop\ni x\mapsto x^n$ is an automorphism of $\trop$.

  $(ii)$~$\Aut(\trop)=\R_+^\times$, with a canonical one parameter group of automorphisms $\theta_\lambda$, $\lambda\in\R_+^\times$.

  $(iii)$~The map $x\to x_0$ defines a bijection of sets $\trop\stackrel{\sim}{\to} \R$; the inverse image of the interval $[-1,1]\subset\R$ is the maximal compact sub-hyperring $\cO\subset\trop$.
}

\vspace{.05in}

No mathematician will abandon with light heart the familiar algebraic framework of rings and fields for the esoteric one of hyperstructures. When Krasner introduced hyperfields (and hyperrings) motivated by the correspondence he had unveiled between the Galois theories in unequal characteristics (\cite{Kr}), the main criticism which prevailed was that all the interesting and known examples of hyperfields (and hyperrings) are obtained as quotients $K/G$ of a field (or ring) by a subgroup $G\subset K^\times$ of its multiplicative group, so why not to encode the structure  by the (classical) pair $(K,G)$
rather than by the hyperfield $K/G$. The second step (ii) in our construction exploits exactly that criticism and turns it into a construction which has the additional advantage to parallel the classical $p$-isotypical Witt construction. Given a hyperfield $H$, a $W$-{\em model} of $H$ is by definition a triple $(K,\rho,\tau)$ where
\begin{itemize}
  \item $K$ is a field
  \item $\rho:K\to H$ is a homomorphism of hyperfields
  \item $\tau: H\to K$ is a multiplicative section of $\rho$.
\end{itemize}
The notion of morphism of $W$-models is straightforward to define. A $W$-model of $H$ is said to be {\em universal}  if it is an initial object in the category of $W$-models of $H$. When such universal model exists it is unique up to canonical isomorphism and we denote it by $W(H)$.
In Section~\ref{sec:2ndsection} we show that the universal $W$-model of $\trop$ exists and it coincides with the triple which was constructed in \cite{CC1,C}, by working with the tropical semi-field $\rmax$ of characteristic one and implementing some concrete formulas, involving entropy, which extend the \te formula for sums of \te lifts to the case of characteristic one. We let $W=\text{Frac}({\Q[\R_+^\times]})$ be the  field of fractions of the group ring of the multiplicative group $\R_+^\times$, $\tau_W: \R_+^\times\to \Q[\R_+^\times]\subset W$ be the canonical group homomorphism and  $\rho_W: W\to \trop\sim\R$ be the map defined by
$$
\rho_W(\sum_i\alpha_i\tau_W(x_i)/\sum_j\beta_j\tau_W(y_j))=\text{sign}(\frac{\alpha_0}{\beta_0}) \frac{x_0}{y_0}
$$
where $x_0=\text{sup}\{x_i\}$, $y_0=\text{sup}\{y_j\}$ and $\alpha_i,\beta_j\in\Q$.

\vspace{.05in}

\textbf{Theorem}~{\em  The triple $(W=\text{Frac}({\Q[\R_+^\times]}),\rho_W, \tau_W)$ is the universal $W$-model for $H=\trop$. The homomorphism $\rho_W$ induces an isomorphism of hyperfields  $W/G\stackrel{\sim}{\to} \trop$, where $G = \ker(\rho_W: W^\times \to \trop^\times)$.
}

\vspace{.05in}

 In the $p$-isotypical Witt construction $R\mapsto W(R)$, the respective roles of
 $(H,K,\rho,\tau)$ correspond to the initial perfect ring $R$, the $p$-isotypical Witt ring $W(R)$, the residue homomorphism $\rho: W(R)\to R$ and the \te lift $\tau:R\to W(R)$. It is important to underline here the fact that while homomorphisms of fields are necessarily injective this restriction no longer applies to hyperfields. This is the reason why one can work directly with fields in the definition of $W$-models. The subring $W_{\Z}(H)\subset W(H)$ generated by the range of the section $\tau$ provides then a ring theoretic structure, at the real archimedean place, and it generates the field $W(H)$. Moreover, the definition of the subring $W_{\Z}(R)\subset W(H)$ is meaningful for any sub-object $R\subset H$. This construction applies in particular to the maximal compact sub-hyperring $\cO\subset \trop$ and it provides the starting structure from where one develops the construction of the real archimedean analogues of the various rings used in $p$-adic Hodge theory. Finally, the functoriality of the set-up of the universal $W$-models yields, for $H=\trop$, a {\em canonical} one parameter group of automorphisms
 \begin{equation}\label{autf}
  \arf_\lambda=W(\theta_\lambda)\in \Aut(W(\trop)), \ \ \lambda\in\R_+^\times
 \end{equation}
 which are compatible with (\ie preserve globally)  the various subrings defined above. The analogue of the covering map $\theta$ is defined, likewise in the $p$-adic case, as the unique ring homomorphism
 \begin{equation}\label{thetamap}
    \theta: W_{\Q}(\trop)\to \R, \qquad \theta(\tau(x))=x_0\qqq x=(x_n)_{n\geq 0}\in F(\R)=\trop.
 \end{equation}
  Using the map $\theta$ we define the universal formal pro-infinitesimal thickening of $\R$ as the $\Ker(\theta)$-adic completion of $W_{\Q}(\trop)$, \ie $\R_\infty=\varprojlim_n W_{\Q}(\trop)/\Ker(\theta)^n$.
 In Section~\ref{entrop} (\cf Theorem \ref{thmkerth}), we show
 that $\R_\infty$ is more substantial than the ring $\R[[T]]$ of formal power series with real coefficients.  For each non-trivial group homomorphism $\ell:\R_+^\times\to \R$, we define a surjective ring homomorphism $\R_\infty\twoheadrightarrow\R[[T]]$. In fact we find that the real vector space $\Omega_\R=\Ker(\theta)/\Ker(\theta)^2$  is infinite dimensional and it is inclusive of the $\R$-linearly independent set of natural periods $\pi_p=[p]-p$, indexed by prime numbers. Theorem \ref{thmentropysym} gives the presentation of $\Omega_\R$ by generators $\varepsilon(x)$, $x\in \R$, and relations ($(A)$, $(B)$, $(C)$), which coincide with the defining relations of the argument of the $1.\frac 12$ logarithm (\cf \cite{Ko,C}) intrinsically related to the entropy function.

 \vspace{.05in}

\textbf{Theorem}~{\em  The space  $\Ker(\theta)/\Ker(\theta)^2$ is the infinite dimensional $\R$-vector space $\Omega_\R$ generated by the symbols $\varepsilon(x)$, $x\in \R$, with relations
\begin{eqnarray}
% \nonumber to remove numbering (before each equation)
  (A) &:& \varepsilon(1-x)=\varepsilon(x)\nonumber \\
  (B) &:& \varepsilon(x+y)=\varepsilon(y)+(1-y)\,\varepsilon(\frac{x}{1-y})+y \, \varepsilon(-\frac x y)\qqq y\notin\{0,1\}\nonumber  \\
  (C) &:& x \,\varepsilon(1/x)=-\varepsilon(x)\qqq x\neq 0. \nonumber
\end{eqnarray}
}

 The above  real archimedean analogue of the $p$-isotypical Witt construction is purely algebraic and the archimedean analogue of the $p$-adic topology plays a dominant role in the third step (iii) of our construction (\cf
 Section~\ref{sec:Frechet}). This process yields $\R$-vector spaces and, in direct analogy with the theory of $p$-adic rings of periods, the definition of several  Banach and Frechet algebras obtained as completions using the direct analogues of the $\|.\|_\rho$ norms of the $p$-adic theory  (\cf Appendix~\ref{algebras}).  In Theorem \ref{mainwitt} we show that the real archimedean analogue $\btplusarc$ of the ring $\wittOovpi$ of $p$-adic Hodge theory (\cf Appendix~\ref{algebras}) is the Banach algebra of convolution of finite real Borel measures on $[0,\infty)$. In Section~\ref{sec:4thsection} we investigate the ideals and the Gelfand spectrum of the  Frechet algebras  obtained from $\btplusarc$ by completion with respect to the archimedean analogue of the norms $\|f\|_\rho$ used in $p$-adic Hodge theory (\cf Appendix~\ref{algebras}). In Theorem \ref{kertheta3} we show that the Gelfand spectrum $\Spec(B^+_{\C,0})$ of the Frechet algebra $B^+_{\C,0}$  is the one point compactification $Y=\C^+\cup\{\infty\}$  of the open half-plane $\C^+=\{z\in \C\mid \Re(z)>0\}$. It follows that the above algebras  can be faithfully represented as algebras of holomorphic functions of the complex variable $z\in Y$, and moreover
  \begin{itemize}
    \item The \te lift $[x]$ of an element $x\in [-1,1]$ is given by the function $z\mapsto {\rm sign}(x)|x|^z$.
    \item For $\rho>0$ the analogue of the $\|.\|_\rho$ norm is given, with $\alpha=-\frac{1}{\log\rho}$, by
 % \begin{equation}\label{normss}
$$
    \|f\|_\rho=\int_{0}^\infty e^{-\xi \alpha} \vert d\phi(\xi)\vert, \  \  \forall f(z)=
    \int_{0}^\infty e^{-\xi z} d\phi(\xi)
$$
% \end{equation}

where the function $\phi(\xi)$ is of bounded variation.
    \item The one parameter group $\arf_\lambda$ acts on $\C^+$ by scaling  $z\to \lambda z$ and it fixes $\infty\in Y$.
  \end{itemize}
  In Appendix \ref{tropical} we explain the relation of the point of view taken in this paper and our earlier archimedean Witt construction in the framework of perfect semi-rings of characteristic one. It is simply given by the change of variables $z=\frac 1 T$ as explained in \eqref{mapback}. Our analogy with the $p$-adic case is based on the following canonical decomposition of the elements of $\btplusarc$
(\cf Section~\ref{sec:Frechet}, Theorem~\ref{thmdec}; the symbol $\smile$ denotes the hyperaddition in $\trop$, \cf\eqref{taurgood})

\vspace{.05in}

\textbf{Theorem}~{\em Let $f\in\btplusarc$. Then, there exists a real number $s_0>-\infty$ and a real measurable function $s\geq s_0\mapsto f_s\in [-1,1]\setminus \{0\}$, unique except on a set of Lebesgue measure zero, such that $f_s\smile f_t=f_s$ for $s\leq t$ and so that} $f=\int_{s_0}^\infty [f_s]e^{-s}ds$.
%\begin{equation}\label{canonicalform}
 %   f=\int_{s_0}^\infty [f_s]e^{-s}ds.
%\end{equation}}

\vspace{.05in}

We use this canonical decomposition as a substitute of the  $p$-adic decomposition of every element $x\in\wittOovpi$  in the form  $x=\sum_{n\gg-\infty}[x_n]\pi^n$, with $x_n\in\cO_F,\forall n$ (we refer again to Appendix~\ref{algebras} for notations).
The relation between the asymptotic expansion of $f(z)$ for $z\to \infty$ in powers of $T=\frac 1z$ and the Taylor expansion at $\xi=0$ of the function $\phi(\xi)$ of 
 the formula $ f(z)=
    \int_{0}^\infty e^{-\xi z} d\phi(\xi)
 $
 is given by the Borel transform. In the simplest example  $\phi(\xi)=\frac{\xi}{1+\xi}$ which corresponds to the Euler divergent series
$$
f(z)=f(1/T)\sim \sum (-1)^n n! T^n=\sum (-1)^n n! z^{-n}
$$
the canonical decomposition is given by the fast convergent expression 
$f=\int_{0}^\infty [f_s]e^{-s}ds$ where $f_s=e^{1-e^s}\in [-1,1]\setminus \{0\}$ for all $s\geq 0$.

By exploiting Titchmarsh's theorem we then show that, in general,  the leading term $f_{s_0}$ in the expansion has a multiplicative behavior in  analogy with the $p$-adic counterpart. This part is directly related to the construction of the Mikusinski field: in Proposition \ref{propextend} we provide the precise relation by  constructing an embedding of the algebra $B^+_\infty$ in the Mikusinski field $\mikfield$.

 In Section \ref{complex} we start the development of the complex case, namely when the local field $\R$ is replaced by the field $\C$ of complex numbers. We describe an intriguing  link between the process of dequantization of $\C$ and the oscillatory integrals which appear everywhere in physics problems. We illustrate this connection by treating in details the case of the Airy function and by showing how the asymptotic expansion of this function (already obtained by Stokes in the nineteenth century) involves an hypersum in the hyperfield quotient of a  field of complex valued functions by a subgroup of its multiplicative group. More in general, in the context of gauge theories in physics, the presence of several critical points is unavoidable and for this reason we expect that the formalism deployed by the theory of hyper-structures (hyperrings and hyperfields) might shed some light on the evaluation of Feyman integrals in that context.\newline
Motivated by the Wick rotation in quantum physics, which allows one to trade an oscillatory integral for an integral of real exponentials, we study a simple ``toy model" $\cfl$    of the dequantization of the field of complex numbers by paralleling the various steps explained before for the real case.
In particular, we prove that $\cfl$ is the natural  perfection of the hyperfield $\troc$  introduced by Viro. The infinite dimensional, complex vector space $\Omega_\C=\Ker(\theta)/\Ker(\theta)^2$ naturally associated to the universal, formal pro-infinitesimal thickening $\C_\infty$ of $\C$ contains two $\C$-linearly independent types of periods. The first set is the natural complexification of the set of real periods $\pi_p$, while the second period $\varepsilon$ is purely complex and it corresponds to $2i\pi$.\newline
Appendix \ref{tabrec} reports a table which describes the archimedean structures that we have defined and discussed in this paper and their $p$-adic counterparts. \newline
In Appendix \ref{uniperf}  we provide a short overview of the well-known construction of universal perfection in number theory.\newline
In Appendix \ref{FW} we develop a succinct presentation of the isotypical Witt construction
 $R\mapsto W(R)$ as a prelude to the theory of $W$-models. The objects of the basic category are triples $(A,\rho,\tau)$. The algebraic geometric meaning of $\rho$ is clear ($\rho:A\to R$ is a ring homomorphism) while the algebraic significance of $\tau$ (a multiplicative section of $\rho$) only becomes conceptual by using the $\F_1$-formalism of monoids.\newline
Finally, in Appendix \ref{algebras} we shortly review  some relevant constructions in $p$-adic Hodge theory which lead to the definition of the rings of $p$-adic periods.

\section{Perfection in characteristic one and dequantization}
\label{sec:1stsection}

In this section we prove that the functor defined by Fontaine (\cite{F2} \S2.1)  which associates to any $p$-perfect field $L$ a perfect field $F(L)$ of characteristic $p$ has an analogue at the real archimedean place. We find that starting with the field $\R$ of real numbers and taking the limit of the field laws yields unavoidably a hyperfield structure (\cf\cite{Kr,CC2} \S2). This construction shows on one side that hyperfields appear naturally as limit of fields and it also provides on the other side an ideal candidate, namely the tropical real hyperfield $\trop$ introduced in \cite{V} (\S7.2), as a replacement at the real archimedean place, of Fontaine's universal perfection structure.
We refer to Appendix~\ref{uniperf} for a short overview of Fontaine's original construction.

Given a  $p$-perfect field $L$, one defines a perfect field $F(L)$ of characteristic $p$
\begin{equation}\label{FdirectL}
F=F(L)=\{x = (x^{(n)})_{n\ge 0}|x^{(n)}\in L, (x^{(n+1)})^p = x^{(n)}\}
\end{equation}
 with the two operations ($x,y\in F$)
 \begin{equation}\label{Fdirect1L}
 (x+y)_{n} = \lim_{m\to\infty}(x^{(n+m)}+y^{(n+m)})^{p^m},\quad (xy)^{(n)} = x^{(n)}y^{(n)}.
 \end{equation}
 Formulas \eqref{FdirectL} and \eqref{Fdirect1L} are sufficiently simple to lend themselves to an immediate generalization.

 Let us start with a topological field $\topf$ and a rational number $\kappa$ and let consider the following set
\begin{equation}\label{Fdirectbis}
F=F(\topf)=\{x = (x_{n})_{n\ge 0}|x_{n}\in\topf, (x_{n+1})^\kappa = x_{n}\}
\end{equation}
 with the two operations ($x,y\in F$)
 \begin{equation}\label{Fdirect1bis}
 (x+y)_{n} = \lim_{m\to\infty}(x_{n+m}+y_{n+m})^{\kappa^m},\quad (xy)_{n} = x_{n}y_{n}.
 \end{equation}
If $\topf$ is a $p$-perfect field one has $\kappa=p$ and thus the $p$-adic (normalized) absolute value yields $|\kappa|_p=\frac 1p <1$. When
 $\topf=\R$ one chooses $\kappa$ such that the usual archimedean absolute value yields $|\kappa|_\infty<1$. We assume that the $2$-adic valuation of $\kappa$ is zero (\ie that the numerator and the denominator of $\kappa$ are odd), so that the operation $x\mapsto x^\kappa$ is well-defined on $\R$.
 The following theorem implements the point of view of \cite{V} to establish a precise link between the process of ``dequantization" in idempotent analysis and the universal perfection construction.
 \begin{thm}\label{limitcase}
 $(1)$~The map $F\ni x\to x_{0}\in \R$ defines a bijection of sets and preserves the multiplicative structures. \vspace{.05in}

  $(2)$~The addition defined by \eqref{Fdirect1bis} is well defined but not associative. \vspace{.05in}

   $(3)$~The addition given by the limit of the graphs in \eqref{Fdirect1bis} is multivalued, associative and defines a hyperfield structure on $F$ which coincides with the real tropical hyperfield $\trop$ of \cite{V}. \vspace{.05in}
 \end{thm}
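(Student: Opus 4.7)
The hypotheses on $\kappa = p/q$ (both $p,q$ odd positive integers, $0<\kappa<1$) ensure that $x \mapsto x^\kappa$ extends to a continuous bijection of $\R$ onto itself, since the odd denominator allows real $q$-th roots of negative numbers with sign preservation. For part~(1), the compatibility $x_{n+1}^\kappa = x_n$ then determines $x_n = x_0^{1/\kappa^n}$ uniquely from $x_0$, so $x \mapsto x_0$ is a bijection $F \to \R$, and multiplicativity is immediate from the coordinate-wise product.

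For part~(2), identify $F$ with $\R$ via $x\mapsto x_0$ and set
\[
a \,\hat{+}\, b := \lim_{m\to\infty}\bigl(a^{1/\kappa^m} + b^{1/\kappa^m}\bigr)^{\kappa^m}, \qquad a,b \in \R.
\]
A case analysis on $\max(|a|,|b|)$ shows this limit always exists: it equals $a$ when $|a|>|b|$ (dominant term, subleading suppressed) or when $a=b$ (factoring and using $2^{\kappa^m}\to 1$), equals $b$ symmetrically, and equals $0$ when $a=-b$ because the summand vanishes exactly for every~$m$. The resulting operation is the signed tropical max. Non-associativity is then witnessed by $(1\,\hat{+}\,1)\,\hat{+}\,(-1) = 1\,\hat{+}\,(-1) = 0$ while $1\,\hat{+}\,(1\,\hat{+}\,(-1)) = 1\,\hat{+}\,0 = 1$; the obstruction is the exact cancellation on the diagonal $a+b=0$ that makes the pointwise limit lose information.

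For part~(3), replace the single-valued limit by the graph limit. Let $\Gamma_m \subset \R^3$ be the graph of the approximating operation in~\eqref{Fdirect1bis} and put $\Gamma_\infty := \limsup_m \Gamma_m$ in the Painlev\'e--Kuratowski sense. Define the hyperoperation $a \smile b := \{c\in\R : (a,b,c) \in \Gamma_\infty\}$. Outside the locus $a+b=0$, continuity of each approximation together with part~(2) forces $a \smile b$ to reduce to a single point, agreeing with Viro's rule. The essential case is $b=-a\neq 0$, where one must show that every $c$ with $|c|\leq|a|$ occurs. For $|c|<|a|$, the explicit choice $a_m := a$ and $b_m := (c^{1/\kappa^m} - a^{1/\kappa^m})^{\kappa^m}$ yields $(a_m^{1/\kappa^m} + b_m^{1/\kappa^m})^{\kappa^m} = c$ identically, while $b_m \to -a$ since $(c/a)^{1/\kappa^m}\to 0$; the boundary values $c=\pm a$ are obtained by passing to the limit from the interior, or directly by perturbations of order $\kappa^{2m}$.

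Once $\smile$ is identified with the hyperaddition on Viro's real tropical hyperfield $\trop$, the hyperfield axioms (neutral element, reversibility, associativity of the multi-valued law, distributivity) follow either from the explicit case description or from the construction in~\cite{V}. The main obstacle is the careful control of the limit near the critical locus $a+b=0$: the sequences $(a_m,b_m)$ producing a prescribed limit $c$ must be calibrated at the precise exponential scale dictated by $\kappa^m$, and it is exactly this calibration that yields the full interval $[-|a|,|a|]$ in $a \smile (-a)$ and restores associativity at the cost of losing single-valuedness.
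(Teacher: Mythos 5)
Your proposal is correct and follows the same overall architecture as the paper's proof: part (1) from bijectivity of $x\mapsto x^\kappa$, part (2) via the explicit case formula and a cancellation counterexample, part (3) via graph convergence. The genuine difference is in how part (3) is nailed down. The paper asserts the graph convergence largely by appeal to the figures and then, in Remark~2.2, justifies the filling of the interval $[-x,x]$ on the anti-diagonal by a range/connectedness argument: the map $\epsilon\mapsto((1+\epsilon)^m-1)^{1/m}$ on $|\epsilon|\le 1/m$ has connected range sweeping out $(-1,1)$ as $m\to\infty$. You instead invert the relation: fixing $c$ with $|c|<|a|$, you take $a_m=a$ and $b_m=(c^{1/\kappa^m}-a^{1/\kappa^m})^{\kappa^m}$, so that $a_m+_m b_m=c$ identically, and verify $b_m\to-a$ using $(c/a)^{1/\kappa^m}\to 0$. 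This is more explicit and self-contained — it exhibits the exact calibration at scale $\kappa^m$ that produces a prescribed limit — and it also naturally gives the stronger Kuratowski $\liminf$ statement rather than just $\limsup$, which is needed to match the paper's claim that the graphs \emph{converge}. Two points deserve a little more care than you give them: (a) your one-line claim that part (2) plus continuity forces single-valuedness off the anti-diagonal needs checking on the diagonal $a=b\neq 0$, where $(b_m/a_m)^{1/\kappa^m}$ can be anything in $[0,1]$; the saving observation is that $(1+t)^{\kappa^m}\to 1$ uniformly for $t\in[0,1]$, so the limit is still $a$; (b) the boundary $c=a$ genuinely falls outside your construction (it would force $b_m=0$, not $b_m\to-a$), so the ``perturbation of order $\kappa^{2m}$'' remark should be replaced by an actual choice, e.g. $b_m=-a(1-\delta_m)$ with $\delta_m\to 0$ but $\delta_m/\kappa^m\to\infty$, which makes $a+_m b_m\to a$ while $b_m\to -a$.
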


\begin{figure}
\centering
\includegraphics[width=3.5in]{}
\caption{Graph of the  addition in $\R$ after conjugation by $x\mapsto x^3$,
\ie of $(x^3+y^3)^{\frac 13}$.}\label{3addition}
\end{figure}

\begin{figure}
\centering
\includegraphics[width=3.5in]{}
\caption{Graph of the  addition in $\R$ after conjugation by $x\mapsto x^{3^n}$ for $n$ large. It converges to the graph of a function which is multivalued on the line $y=-x$.}\label{limitaddition}
\end{figure}

\proof $(1)$~By construction the map $x\mapsto x^\kappa$ is a bijection of $\R$, thus the map $F\ni x\mapsto x^{(0)}\in \R$ is a bijection of sets. It also preserves the multiplicative structure, due to the definition of the multiplication on $F$ as in the second formula in \eqref{Fdirect1bis}.\newline
$(2)$~To avoid confusion with the ordinary addition, we denote the addition in $F$, as in the first formula in \eqref{Fdirect1bis} and expressed in terms of $x_0\in \R$, by $x +' y$. More explicitly, it  is given by the formula
\begin{equation*}
 x +' y=  \lim_{m\to\infty}(x^{\kappa^{-m}}+y^{\kappa^{-m}})^{\kappa^m}
\end{equation*}
and is easy to compute. In fact, it is given by
\begin{equation}\label{taur}
    x +' y=\left\{
                 \begin{array}{ll}
                   x, & \hbox{if $|x|>|y|$ or $x=y$;} \\
                   y, & \hbox{if $|x|<|y|$ or $x=y$;} \\
                   $0$, & \hbox{if $y=-x$.}
                 \end{array}
               \right.
\end{equation}
In particular one finds $x+'x=x$, $\forall x\in F$. The associative law cannot hold since
for any $y\in F$ with $|y|<|x|$ one has
\begin{equation*}
    (y +' x)+' -x=x +' -x=0\,, \ \ y  +' (x +' -x)=y +' 0 =y\,.
\end{equation*}

$(3)$~For each non negative integer $m$, the graph $G_m$ of the addition  conjugated by the map $x\mapsto x^{\kappa^{-m}}$ is connected (\cf Figure~\ref{3addition}). When $m\to \infty$ these graphs converge, as closed subsets of $\R\times \R\times \R$ (as Figure~\ref{limitaddition} shows) to the graph $G$ of the addition $\smile$ on the  hyperfield $\trop$.  The obtained hyperaddition of real numbers is the following
\begin{equation}\label{taurgood}
    x \smile y=\left\{
                 \begin{array}{ll}
                  x, & \hbox{if $|x|>|y|$ or $x=y$;} \\
                   y, & \hbox{if $|x|<|y|$ or $x=y$;} \\
                    $[-x,x]$, & \hbox{if $y=-x$.}
                 \end{array}
               \right.
\end{equation}
One can see in Figure \ref{limitaddition} how the limit of the graphs of the conjugates of addition becomes multivalued on the anti-diagonal $y=-x$ and fills up the interval $[-x,x]$. One checks directly that with this hyperaddition $\trop$
is a hyperfield.\endproof

Notice that replacing the sum \eqref{taur} by the multivalued one \eqref{taurgood} is the {\em only way} of making the latter one associative without altering the first two lines of \eqref{taur}. Indeed, the fact that $0\in x+(-x)$ implies that for any $y$ with $|y|<|x|$ one has
\begin{equation*}
    y\in y+(x+(-x))=(y+x)+(-x)=x+(-x).
\end{equation*}

\begin{rem}\label{assochyper}{\rm
The second statement of Theorem~\ref{limitcase} shows that there is no ``formal" proof of associativity when addition is defined by \eqref{Fdirect1bis} and assuming that the limit exists. The third statement of the theorem implies that hyperfields naturally arise when one considers limits of field structures on the same topological space, since as the proof of the statement (3) shows, the limit of univalent maps giving addition may well fail to be univalent.

The abstract reason behind the associativity of the hyperlaw given by the limit $G$ of the graphs $G_n$ of the conjugate $+_n$ of the addition in $\R$ is that for any convergent sequence $z_n\to z$, $z\in G(x,y)$ there exist convergent sequences
$x_n\to x$ and $y_n\to y$ such that $z_n=x_n+_n y_n$. This is easy to see if $|y|<|x|$ or $y=x$ since (with $m$ an odd integer)
\begin{equation*}
    \partial_x (x^m+y^m)^{1/m}=\left(1+(y/x)^m\right)^{-1+\frac 1m}.
\end{equation*}
 If $y=-x$ the result also holds since the range of the real map $\epsilon\mapsto ((1+\epsilon)^m-1)^{1/m}$, for $|\epsilon|\leq \frac 1m$, is connected and fills up the interval $(-1,1)$ when  $m\to \infty$ ($m$ odd).
}\end{rem}
\begin{defn}\label{defn1} $(1)$~A hyperfield $H$ is of characteristic one if $x+x=x$, $\forall x\in H$.

$(2)$~A hyperfield $H$ of characteristic one is perfect if and only if for any odd integer $n>0$, the map $H\ni x\mapsto x^n$ is an automorphism of $H$.
\end{defn}
\begin{prop} \label{propviro}
$(i)$~The real tropical hyperfield $\trop$ is perfect and of characteristic one.

$(ii)$~The map $\lambda\mapsto \theta_\lambda$, $\theta_\lambda(x)={\rm sign}(x)|x|^\lambda$, $\forall x\in \R$, defines a group isomorphism
 $\theta: \R_+^\times\stackrel{\sim}{\to} \Aut(\trop)$. If $\lambda=\frac ab\in \Q_+^\times$ is odd (\ie both $a$ and $b$ are odd integers) one has $\theta_\lambda(x)=x^\lambda$, $\forall x\in \R$.

$(iii)$~The compact subset $[-1,1]\subset \trop$ is the maximal compact sub-hyperring
$\cO$ of $\trop$.

$(iv)$~The hyperfield $\trop$ is complete for the distance given by $d(x,y)=|x_0-y_0|$, for $x,y\in F=\trop$.
\end{prop}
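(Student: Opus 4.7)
The plan is to treat the four parts in order, leaning on Theorem~\ref{limitcase} to identify $\trop$ with $\R$ as a set (via $x\mapsto x_0$) so that every statement can be phrased in terms of the explicit hyperaddition~\eqref{taurgood} and ordinary multiplication.

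For $(i)$, setting $y=x$ in~\eqref{taurgood} gives $x\smile x=x$, which is the characteristic-one relation. For perfectness, I would verify that for odd $n>0$ the map $\phi_n(x)=x^n$ is (a) a multiplicative bijection of $\R$ (odd powers are bijective), (b) preserves signs (since $n$ is odd), and (c) preserves absolute-value comparisons (since $t\mapsto t^n$ is strictly increasing on $\R_+$). These three facts together with the case-by-case definition of $\smile$ show $\phi_n(x\smile y)=\phi_n(x)\smile\phi_n(y)$, so $\phi_n\in\Aut(\trop)$.

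Part $(ii)$ is the main obstacle. Writing $\theta_\lambda(x)=\sign(x)|x|^\lambda$, multiplicativity is immediate and compatibility with $\smile$ follows as in $(i)$ because signs are preserved and $t\mapsto t^\lambda$ is strictly increasing on $\R_+$; when $\lambda=a/b$ is odd one has $\theta_\lambda(x)=x^\lambda$ on all of $\R$. Injectivity of $\lambda\mapsto\theta_\lambda$ follows from evaluation at any single $x>1$. The content is surjectivity: given $\sigma\in\Aut(\trop)$, I would first observe that $\sigma(0)=0$, $\sigma(1)=1$, and $\sigma(-1)=-1$ because $(-1)^2=1$ and $\sigma$ is a bijection fixing $1$. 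Hence $\sigma$ preserves signs and restricts to a group automorphism of $(\R_+^\times,\cdot)$. Next, since $x\smile y=x$ whenever $|x|>|y|$ and the hyperaddition is $\sigma$-equivariant, $\sigma$ preserves the order on $\R_+^\times$; a monotone bijection of an open interval is automatically continuous, so $\sigma|_{\R_+^\times}$ is a continuous multiplicative automorphism of $\R_+^\times$. Pulling back by $\log$, this is a continuous additive automorphism of $\R$, hence multiplication by some $\lambda\in\R^\times$; positivity of $\lambda$ is forced by order-preservation. Extending by sign gives $\sigma=\theta_\lambda$ with $\lambda\in\R_+^\times$.

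For $(iii)$, I would check that $\cO=[-1,1]$ is stable under multiplication (clear from $|xy|\le|x||y|$) and under $\smile$ (the only nontrivial case is $y=-x$, where $[-x,x]\subset[-1,1]$), and contains $0,1$, hence is a sub-hyperring; compactness in the metric of $(iv)$ is just compactness of $[-1,1]\subset\R$. Maximality: if $R$ is any sub-hyperring containing some $x$ with $|x|>1$, then $x^n\in R$ for all $n\ge 0$, so $R$ is unbounded in the metric $d$ and therefore not compact; thus $R\subseteq\cO$. Finally, $(iv)$ is immediate from part $(1)$ of Theorem~\ref{limitcase}: the bijection $x\mapsto x_0$ is by definition an isometry from $(\trop,d)$ onto $(\R,|\cdot|)$, and $\R$ is complete.
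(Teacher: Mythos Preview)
Your proposal is correct and follows essentially the same approach as the paper's own proof: both verify $(i)$ directly from the hyperaddition rule, both prove $(ii)$ by showing that any $\sigma\in\Aut(\trop)$ fixes $-1$, preserves $\R_+^\times$, and restricts to an increasing group automorphism there (hence a power map), both handle $(iii)$ via unbounded powers of any $|x|>1$, and both derive $(iv)$ from the isometry with $\R$ in Theorem~\ref{limitcase}. Your write-up simply fills in more detail in $(ii)$, making explicit the passage from ``monotone group automorphism of $\R_+^\times$'' to ``$t\mapsto t^\lambda$'' via continuity and the classification of continuous additive endomorphisms of $\R$, a step the paper leaves implicit.
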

\proof $(i)$ follows from the equality $1+1=1$ which holds in $\trop$. The perfection follows from $(ii)$.\newline
 $(ii)$~The maps $\theta_\lambda$ are automorphisms for the multiplicative structure. They also preserve the hyperaddition $\smile$ on $\trop$. By construction, they agree with $x\mapsto x^\lambda$ when $\lambda\in\Q_+^\times$ is odd. Let $\alpha\in \Aut(\trop)$. Since $\alpha$ is an automorphism of the multiplicative group $\R^\times=\R_+^\times\times \{\pm 1\}$, one has $\alpha(-1)=-1$, and $\alpha$ preserves globally $\R_+^\times$. The compatibility with the hyperaddition shows that $\alpha$ defines an increasing group automorphism of $\R_+^\times$, thus it coincides with $\theta_\lambda$.\newline
$(iii)$~The compact subset $[-1,1]\subset \trop$ is stable under multiplication and hyperaddition. For any element  $x\in\trop$ with  $x\notin [-1,1]$, the integer powers $x^n$ form an unbounded subset of $\trop$, the maximality property then follows.\newline
$(iv)$~By Theorem~\ref{limitcase} (1), the map $x\to x_0\in \R$ defines a bijection of sets which is an isometry for the distance $d(x,y)$. The conclusion follows.
\endproof

We refer to Appendix~\ref{uniperf} (Proposition~A.1) for the $p$-adic counterpart of the above.

\section{The algebraic Witt construction for hyperfields}
\label{sec:2ndsection}

 In this paper we use the following formulation of the classical $p$-isotypical Witt construction which associates to a perfect ring $R$ of characteristic $p$ the strict $p$-ring $W(R)$ of Witt vectors. We denote by $\rho_R: W(R)\to R$ the canonical homomorphism and $\tau_R: R\to W(R)$ the multiplicative section given by the \te lift.
 The following proposition is an immediate corollary of Theorem 1.2.1 of \cite{F3}.
\begin{prop} \label{propwittiso} Let $p$ be a prime number and $R$
a perfect ring of characteristic $p$. The triple $(W(R),\rho_R,\tau_R)$ is the universal object among triples $(A,\rho,\tau)$ where $A$ is a (commutative) ring, $\rho:A\to R$ is a ring homomorphism with multiplicative section
$\tau:R\to A$ and the following condition holds
\begin{equation}\label{projcond}
    A=\varprojlim_n A/\Ker(\rho)^n.
\end{equation}
\end{prop}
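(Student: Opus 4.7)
The plan is to show that for any triple $(A,\rho,\tau)$ satisfying the hypotheses, there exists a unique ring homomorphism $\Phi : W(R) \to A$ with $\rho\circ\Phi = \rho_R$ and $\Phi\circ\tau_R = \tau$. Two structural inputs drive the construction: the canonical Teichm\"uller expansion available in the strict $p$-ring $W(R)$, and the $\Ker(\rho)$-adic completeness imposed by \eqref{projcond}.

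First I would observe that $p\in\Ker(\rho)$, since $\rho(p\cdot 1_A)=p\cdot 1_R=0$ because $R$ has characteristic $p$. By \eqref{projcond} any series $\sum_{n\ge 0} a_n p^n$ with $a_n\in A$ therefore converges in $A$. Since $R$ is perfect, every $x\in W(R)$ admits a unique Teichm\"uller expansion $x = \sum_{n\ge 0} \tau_R(x_n)\, p^n$ with $x_n\in R$, and one defines
\begin{equation*}
\Phi(x) \; := \; \sum_{n\ge 0} \tau(x_n)\, p^n \in A.
\end{equation*}
The compatibility $\Phi\circ\tau_R = \tau$ is tautological, and $\rho\circ\Phi = \rho_R$ follows from $\rho(p)=0$ and $\rho\circ\tau = \mathrm{id}_R$, which force $\rho(\Phi(x))=\rho(\tau(x_0))=x_0=\rho_R(x)$.

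The main obstacle is showing that $\Phi$ respects addition and multiplication. The argument invokes the universal Witt polynomials $S_n, P_n \in \Z[X_0,\ldots,X_n,Y_0,\ldots,Y_n]$ expressing the Witt coordinates of sums and products. Translated into the Teichm\"uller basis, using perfection of $R$ to extract $p$-th power roots, these yield, for $x = \sum \tau_R(x_n)p^n$ and $y = \sum \tau_R(y_n)p^n$, explicit polynomial identities with integer coefficients describing the Teichm\"uller coordinates of $x+y$ and $xy$ in terms of the $p$-power roots of the $x_i,y_j$. Because $\tau$ is multiplicative and these identities are universal over $\Z$, they are preserved upon application of $\Phi$ modulo $\Ker(\rho)^{N+1}$ for every $N$; passing to the limit via \eqref{projcond} gives $\Phi(x+y)=\Phi(x)+\Phi(y)$ and $\Phi(xy)=\Phi(x)\Phi(y)$. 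Uniqueness is then immediate: any competing homomorphism $\Phi'$ agrees with $\Phi$ on all Teichm\"uller lifts and on $p$, hence on every finite truncation $\sum_{n\le N}\tau_R(x_n)p^n$, and the $\Ker(\rho)$-adic continuity forced by \eqref{projcond} (via $\Phi'(p^n W(R))\subset\Ker(\rho)^n$) extends this equality to all of $W(R)$.
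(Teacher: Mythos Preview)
Your overall strategy---define $\Phi$ via the Teichm\"uller expansion and verify it is a ring homomorphism by working modulo $\Ker(\rho)^{N+1}$ and passing to the limit---is correct and is precisely the content behind the references the paper invokes (Serre, \emph{Corps Locaux}, II \S\S 4--5). However, the crucial step has a genuine gap. The sentence ``because $\tau$ is multiplicative and these identities are universal over $\Z$, they are preserved upon application of $\Phi$ modulo $\Ker(\rho)^{N+1}$'' is not an argument: the Witt-polynomial identities encode the ring structure of $W(R)$, not of $A$, and multiplicativity of $\tau$ alone gives no control over how $\tau(a)+\tau(b)$ behaves inside $A$. Nothing you have written explains why a polynomial relation among Teichm\"uller lifts in $W(R)$ should force the analogous relation among the $\tau(x_i)$ in the arbitrary ring $A$.

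The missing ingredient is the elementary congruence lemma: since $p\in\mathfrak a:=\Ker(\rho)$, one has $u\equiv v\pmod{\mathfrak a}\Rightarrow u^{p^m}\equiv v^{p^m}\pmod{\mathfrak a^{m+1}}$. Combined with multiplicativity of $\tau$ and perfection of $R$ this yields
\[
\tau(r)=\tau\bigl(r^{1/p^m}\bigr)^{p^m}\equiv \hat r^{\,p^m}\pmod{\mathfrak a^{m+1}}
\]
for \emph{any} lift $\hat r\in A$ of $r^{1/p^m}$. From this one sees that, modulo $\mathfrak a^{N+1}$, your map $\Phi$ agrees with the $N$-th ghost polynomial $w_N$ evaluated at arbitrary lifts of the $x_i^{1/p^{N-i}}$; since the ghost components satisfy $w_N(S_\bullet)=w_N(X_\bullet)+w_N(Y_\bullet)$ and $w_N(P_\bullet)=w_N(X_\bullet)\,w_N(Y_\bullet)$ as polynomial identities over $\Z$, additivity and multiplicativity of $\Phi$ modulo each $\mathfrak a^{N+1}$ follow. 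This is exactly the substance of Serre's Propositions~8 and~10 that the paper cites; the paper then uses the uniqueness of the multiplicative section (Prop.~8) to conclude $\tau=\alpha\circ\tau_R$ a posteriori, whereas you build $\tau$ into the definition of $\Phi$ from the outset---a harmless reorganization once the congruence lemma is in place.
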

We refer to Appendix~\ref{FW} for an elaboration on the nuance, due to the presence of the multiplicative lift $\tau$ in the currently used formulation, with respect to the classical notion of universal $p$-adic thickening.
Next, we proceed in a similar manner with hyperfields, by suitably transposing the above set-up
\begin{defn}\label{defnwittmo} Let $H$ be a hyperfield.
 A Witt-model ($W$-model) of $H$  is a triple $(K,\rho,\tau)$, where $K$ is a field, $\rho: K\to H$ is a homomorphism of hyperfields,  and $\tau$ is a multiplicative section of $\rho$.
\end{defn}
 A morphism $(K_1,\rho_1,\tau_1)\to (K_2,\rho_2,\tau_2)$ of $W$-models of $H$ is a field homomorphism $\alpha: K_1\to K_2$ such that the following equations hold
\begin{equation}\label{19}
\tau_2=\alpha\circ\tau_1,\quad \rho_1=\rho_2\circ\alpha.
\end{equation}
\begin{defn}\label{defnwittmo&} A $W$-model for a hyperfield $H$ is universal if there exists a unique morphism from this model  to any other $W$-model of $H$.
\end{defn}
If a universal $W$-model exists then it is unique up-to unique isomorphism and in that case we denote it by $(W(H),\rho_H,\tau_H)$.

Next, we study the $W$-models for the real tropical hyperfield $H = \trop$.
 As a first step we construct a particular $W$-model for $H$ and then we shall prove that in fact it is the universal one.

Let $W=\text{Frac}({\Q[\R_+^\times]})$ be the field of fractions of the rational group ring $\Q[\R_+^\times]$
of the multiplicative group $\R_+^\times$. We let $\tau_W: \R_+^\times\to W$ be the canonical group homomorphism $\tau_W(x)(=[x])$ and define the map $\rho_W: W\to \trop\sim\R$ by
\begin{equation}\label{7}
\rho_W(\sum_i\alpha_i\tau_W(x_i)/\sum_j\beta_j\tau_W(y_j))=\text{sign}(\frac{\alpha_0}{\beta_0}) \frac{x_0}{y_0}
\end{equation}
where $x_0=\text{sup}\{x_i\}$, $y_0=\text{sup}\{y_j\}$ and $\alpha_i,\beta_j\in\Q$. We extend $\tau_W$ to a multiplicative section $\tau_W: \trop\to W$ of $\rho_W$ by setting $\tau_W(0)=0$ and $\tau_W(-x)=-\tau_W(x)$. It is straightforward to verify that $(W,\rho_W,\tau_W)$ is a $W$-model of $\trop$ (the details are provided in the next proof). We claim that it is also the universal one, moreover it describes the full structure of the hyperfield $\trop$ as the quotient of a field by a subgroup of its multiplicative group.

\begin{thm}\label{thmuni} The triple $(W=\text{Frac}({\Q[\R_+^\times]}),\rho_W, \tau_W)$ is the universal $W$-model for $H=\trop$. The homomorphism $\rho_W$ induces an isomorphism of hyperfields  $W/G\stackrel{\sim}{\to} \trop$, where $G = \ker(\rho_W: W^\times \to \trop^\times)$.
\end{thm}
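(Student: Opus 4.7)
The plan is to verify first that $(W,\rho_W,\tau_W)$ is a $W$-model, then to construct for every other $W$-model $(K,\rho,\tau)$ a unique morphism $\alpha\colon W\to K$, and finally to deduce the quotient description. Checking that $(W,\rho_W,\tau_W)$ is a $W$-model is a direct calculation: multiplicativity of $\tau_W$ and of $\rho_W$ is immediate from the definitions, and the hyperadditive axiom $\rho_W(a+b)\in\rho_W(a)\smile\rho_W(b)$ is first verified on $\Q[\R_+^\times]$ by a short case analysis on whether the dominating monomials of $a$ and $b$ correspond to the same $x\in\R_+^\times$ or not (and, in the coincident case, whether their rational coefficients cancel), and then transported to $W$ via $\rho_W(p/q)=\rho_W(p)/\rho_W(q)$ together with the identity $\lambda\cdot(x\smile y)=(\lambda x)\smile(\lambda y)$ in $\trop$ for $\lambda\neq 0$.

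For universality, the definition of $\alpha$ is forced: on $\Q[\R_+^\times]$ it must be the $\Q$-linear ring homomorphism $\alpha_0$ determined by $\alpha_0(\tau_W(x))=\tau(x)$ for $x\in\R_+^\times$. The main obstacle is to show that $\alpha_0$ is injective, as this gives a unique extension to a field homomorphism $\alpha\colon W=\text{Frac}(\Q[\R_+^\times])\to K$; the identities $\tau=\alpha\circ\tau_W$ and $\rho_W=\rho\circ\alpha$ then follow by checking on generators (using $\tau(-1)=-1$ to handle negative arguments of $\tau_W$), and uniqueness follows because any such morphism is determined on $\tau_W(\R_+^\times)$.

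Injectivity of $\alpha_0$ rests on a hyperfield computation inside $K$. Given $f=\sum_{i=0}^{n}\alpha_i\tau_W(x_i)\in\Q[\R_+^\times]$ with the $x_i\in\R_+^\times$ pairwise distinct, all $\alpha_i\in\Q^\times$, and $x_0=\max_i x_i$, factor
\[
\alpha_0(f)=\alpha_0\,\tau(x_0)\,(1+g),\qquad g=\sum_{i\geq 1}\frac{\alpha_i}{\alpha_0}\,\tau(x_i/x_0).
\]
Because $\rho(n)=1$ for every positive integer $n$ (an immediate consequence of $1\smile 1=1$ in $\trop$ together with the hyperadditive axiom), $K$ has characteristic zero and $\alpha_0\in\Q^\times$ is invertible in $K$; and $\tau(x_0)\neq 0$ since $\rho(\tau(x_0))=x_0\neq 0$. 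Each $\tau(x_i/x_0)$ has $\rho$-image of absolute value $x_i/x_0<1$ and each rational coefficient has $\rho$-image $\pm 1$, so a short induction on the number of summands, using the fact that a two-term hypersum in $\trop$ has elements of absolute value bounded by the maximum of those of its summands, yields $|\rho(g)|<1$. The dominating-term rule in $\trop$ then forces $\rho(1+g)=1\neq 0$, hence $1+g\neq 0$ in $K$ and $\alpha_0(f)\neq 0$. As a by-product $\rho(\alpha_0(f))=\text{sign}(\alpha_0)\,x_0=\rho_W(f)$, which is exactly the compatibility $\rho\circ\alpha=\rho_W$.

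For the quotient statement $W/G\simeq\trop$, the same computation applied to $\rho_W$ itself shows that $\rho_W(f)=0$ forces $f=0$ in $W$; combined with the obvious surjectivity (via $\tau_W$) and the multiplicativity $\rho_W(a/b)=\rho_W(a)/\rho_W(b)$, this gives that the induced map $W/G\to\trop$ is a well-defined multiplicative bijection. Compatibility of the two hyperadditions reduces to a case analysis on $s=\rho_W(a)$, $t=\rho_W(b)$: when $|s|\neq|t|$ or $s=t$ the hypersum $s\smile t$ is a singleton and the inclusion is automatic, while in the antipodal case $t=-s\neq 0$ every $y\in[-|s|,|s|]=s\smile t$ is realized as $\rho_W(a_0+b_0)$ for explicit representatives $a_0\in aG$, $b_0\in bG$, for instance $a_0=\tau_W(s)$ and $b_0=-\tau_W(s)+\tau_W(y)$ when $y>0$, $a_0+b_0=0$ when $y=0$, and a mirror choice when $y<0$.
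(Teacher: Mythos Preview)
Your proof is correct and follows essentially the same strategy as the paper's: both establish injectivity of $\alpha_0$ by showing $\rho(\alpha_0(f))\neq 0$ via the dominating-monomial rule in $\trop$, and both verify $W/G\simeq\trop$ by a case analysis on the hyperaddition. The only notable difference is that the paper introduces an auxiliary field embedding $\Phi\colon W\to\mathcal M(\C)$ into meromorphic functions, interpreting $\rho_W(X)$ as the limit $\lim_{n\to\infty}\Phi(X)(2n+1)^{1/(2n+1)}$, and runs the case analysis for $W/G$ through these asymptotics; your argument dispenses with $\Phi$ and works directly with the combinatorial definition of $\rho_W$, which is slightly more elementary. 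One small patch: in the antipodal case your representative $b_0=-\tau_W(s)+\tau_W(y)$ fails at the boundary $y=s$ (it gives $b_0=0$, which does not lie in $bG$); the endpoints $y=\pm s$ are covered instead by taking, e.g., $a_0=2\tau_W(s)$, $b_0=-\tau_W(s)$ and its mirror, as the paper does.
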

\proof First we show that the triple $(W=\text{Frac}({\Q[\R_+^\times]}),\rho_W, \tau_W)$ is a $W$-model and it also fulfills the second property. The map $\tau_W: \trop\to W$, $x\to\tau_W(x)=[x]$ is multiplicative by construction and it is immediate to check that $\rho_W\circ \tau_W=id$, thus $\tau_W=[~]$ is a multiplicative section of the map $\rho_W$ defined by \eqref{7}. To understand $\rho_W$  it is useful to consider the field homomorphism $\Phi: W\to \mathcal M(\C)$,  where  $\mathcal M(\C)$ is the field of meromorphic functions on $\C$, defined by the formula
\begin{equation}\label{8}
\Phi(\sum_i\alpha_i\tau_W(x_i)/\sum_j\beta_j\tau_W(y_j))(z) = \frac{\sum_i\alpha_ix_i^z}{\sum_j\beta_j y_j^z}.
\end{equation}
Then, keeping in mind the notation of \eqref{7}, one deduces the following  interpretation of $\rho_W$
\begin{equation}\label{9}
\Phi(X)(z) \sim (\frac{\alpha_0}{\beta_0}) (\frac{x_0}{y_0})^z\quad\text{when}~z\to+\infty
\end{equation}
(since for $x_j<x_0, y_j<y_0$ one has $x_j^z\ll x_0^z$,
$y_j^z\ll y_0^z$ when $z\to+\infty$).
One can thus state that
\begin{equation}\label{10}
\exists a\in\R_+, \ \Phi(X)(2n+1)\sim a\rho_W(X)^{2n+1}\quad\text{when}~n\to+\infty.
\end{equation}
This means that one can define $\rho_W: W \to \trop$ by the formula
\begin{equation}\label{11}
\rho_W(X) =\lim_{n\to+\infty}(\Phi(X)(2n+1))^{1/(2n+1)}.
\end{equation}
 Notice that \eqref{11} is well defined because the odd roots are uniquely defined in $\R$. The map $\rho_W$ is clearly multiplicative, next we show that it induces an isomorphism of hyperfields  $W/G\stackrel{\sim}{\to} \trop$, where the subgroup $G$ is the kernel at the multiplicative level, \ie $G = \ker(\rho_W: W^\times \to \trop^\times)$. By definition the underlying set of  $G$ is made by the ratios
$(\sum_i\alpha_i\tau_W(x_i))/(\sum_j\beta_j\tau_W(y_j))\in W$, such that: $x_0=y_0$ and sign($\alpha_0$)$=$sign($\beta_0$). What remains to show is that the hyper-addition in $\trop$ coincides with the quotient addition rule $x+_G y$ on  $W/G=\R$. By definition one has
\begin{equation}\label{12}
x+_G y = \{\rho_W(X+Y)|~\rho_W(X)=x, \rho_W(Y)=y\}.
\end{equation}
We need to consider three cases:
\begin{enumerate}
\item[a)] Assume $|x|<|y|$. Then for some $a,b\in\R_+$ one has
$
\Phi(X)(2n+1)\sim a x^{2n+1}$ and $\Phi(Y)(2n+1)\sim b  y^{2n+1}$
 and thus it follows that  $\Phi(X+Y)(2n+1)\sim b y^{2n+1}$. Then one gets  $\rho_W(X+Y)=y$.
 \item[b)] Assume $x=y$. Then  for some $a,b\in\R_+$ one has
 $
 \Phi(X)(2n+1)\sim a x^{2n+1}$, $\Phi(Y)(2n+1)\sim b x^{2n+1}$ and $\Phi(X+Y)(2n+1)\sim (a+b)x^{2n+1}$,
 thus one gets  $\rho_W(X+Y)=x$, since $a+b>0$.
 \item[c)] Assume $y=-x$. In this case, we have with $a,b\in\R_+$:
 $
 \Phi(X)(2n+1)\sim a x^{2n+1}$, $\Phi(Y)(2n+1)\sim b y^{2n+1}=-bx^{2n+1}$. In this case we can only conclude that
 $
 |\Phi(X+Y)(2n+1)|\lesssim c|x|^{2n+1}
 $
 which gives $|\rho_W(X+Y)|\le|x|$. Moreover,  by choosing $X = a\tau_W(x)$, $Y = b\tau_W(y)$ for suitable real $a,b$, we conclude that $\{x,y\}\subset x+_G y$. In fact by taking  $z$, $|z|<|x|$, $X = \tau_W(x)+\tau_W(z)$, $Y=\tau_W(y)=-\tau_W(x)$, one gets $\rho_W(X+Y)=z$ and hence $x+_G y$ is the whole interval between $x$ and $y$.
 \end{enumerate}
 This shows that the quotient addition rule on $W/G$ coincides with \eqref{taurgood}.

Finally, we show that the triple $(W,\rho_W, \tau_W)$ is the universal $W$-model for $H=\trop$.

Let $(K,\rho,\tau)$ be a $W$-model for $H=\trop$.
First we prove that the field $K$ is of characteristic zero.
Indeed,  one has $\rho(1)=1$ and thus for any positive integer $p$ one derives
\[\rho(p)\in \underbrace{\rho(1)+\cdots+\rho(1)}_{p-times}=\{1\},\] so that $p\neq 0$, $\forall p$. Next,
we note that $\tau(-1)=-1\in K$. Indeed, since $\tau$ is multiplicative one has $\tau(-1)^2=\tau(1)=1$ and $\tau(-1)\neq 1$ since $(\rho\circ\tau)(-1)=-1\neq\rho(1)=1$. By multiplicativity of $\tau$ we thus get
\begin{equation}\label{15}
\tau(-x)=-\tau(x)\quad \forall x\in H=\trop.
\end{equation}
We first define the homomorphism $\alpha:W\to K$ on $R= \Q[\R_+^\times]\subset W$. We denote  an element in $R$ as $x = \sum_ia_i\tau_W(x_i)$ with $a_i\in\Q$. We define $\alpha: R \to K$ by the formula
\begin{equation}\label{16}
\alpha\left(\sum_ia_i\tau_W(x_i)\right)=\sum_i a_i\tau(x_i).
\end{equation}
By applying the property \eqref{15} we get  $\alpha(\tau_W(x))=\tau(x)$, $\forall x\in H=\trop$.
We check now that $\alpha: R \to K$ is injective.
Let $R\ni x = \sum_i a_i\tau_W(x_i)\neq 0$. Let $x_0=\text{max}\{x_i\}$. One then has
\[
\rho(\alpha(x))=\rho\left(\sum_i a_i\tau(x_i)\right)\in \sum_i\rho(a_i\tau(x_i)).
\]
 One has  $\rho(a)=1$ for $a\in \Q$, $a>0$ and $\rho(-1)=-1$   and hence $\rho(a_i\tau(x_i))=\epsilon_i\rho\tau(x_i)=\epsilon_i x_i$
 with $\epsilon_i$ the sign of $a_i$. Thus $\rho(\alpha(x))\in \sum_H \epsilon_i x_i=\epsilon_0 x_0\neq 0$ and the injectivity is proven. From the injectivity just proven it follows that $\alpha: R \to K$  defines  a field homomorphism  $\alpha: \text{Frac}(R)\to K$. By construction one has $\alpha(\tau_W(x))=\tau(x)$ $\forall x\in \R$.

 It remains to show the second equality of \eqref{19}.
 Consider $\rho\circ\alpha$: to show that  this is equal to $\rho_W$ it is enough to prove that they agree on $R$ since both maps are multiplicative. Let $x =\sum_i a_i\tau_W(x_i)\in R$, then one has $\rho(\alpha(x))\in \sum_i\rho(a_i\tau(x_i))$ and by  the above argument one gets $\rho(\alpha(x))=\epsilon_0 x_0=\rho_W(x)$. This shows that the morphism $\alpha$ exists and it is unique because $\alpha(\tau_W(x))$ is necessarily equal to $\tau(x)$ so that by linearity we know $\alpha$ on $R$ and hence on $\text{Frac}(R)$.
 \endproof

 The following functoriality will be applied later in the paper.
 \begin{prop}\label{propfunct} Let $H$ be a hyperfield and assume that the universal $W$-model $W(H)$ for $H$ exists.
 Then there is a canonical   group homomorphism
 \begin{equation}\label{18}
 W: \text{Aut}(H)\to \text{Aut}(W(H)),\qquad W(\theta)=\alpha.
 \end{equation}
 \end{prop}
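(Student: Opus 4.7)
The plan is to exploit the universal property of $W(H)$ by twisting the given model by $\theta$. More precisely, given $\theta\in\Aut(H)$, I would first construct a new $W$-model of $H$ whose underlying field is still $W(H)$ but whose structure maps are the $\theta$-twists of the originals. Then the universal morphism from $(W(H),\rho_H,\tau_H)$ to this twisted model will produce the desired endomorphism $\alpha=W(\theta)$; functoriality will upgrade this to a group homomorphism.

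Concretely, for $\theta\in\Aut(H)$ I set $M_\theta := (W(H),\ \theta^{-1}\circ\rho_H,\ \tau_H\circ\theta)$. One checks it is a $W$-model of $H$: the map $\theta^{-1}\circ\rho_H$ is a hyperfield homomorphism $W(H)\to H$ as a composition of such, the map $\tau_H\circ\theta$ is multiplicative, and
\[
(\theta^{-1}\circ\rho_H)\circ(\tau_H\circ\theta)=\theta^{-1}\circ(\rho_H\circ\tau_H)\circ\theta=\theta^{-1}\circ\id\circ\theta=\id,
\]
so $\tau_H\circ\theta$ is a multiplicative section. By universality of $(W(H),\rho_H,\tau_H)$ there is a unique field homomorphism $\alpha:W(H)\to W(H)$ realising a morphism of $W$-models to $M_\theta$, i.e.\ satisfying
\[
\alpha\circ\tau_H=\tau_H\circ\theta,\qquad \rho_H=(\theta^{-1}\circ\rho_H)\circ\alpha,
\]
the second relation being equivalent to $\rho_H\circ\alpha=\theta\circ\rho_H$. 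I would then define $W(\theta):=\alpha$.

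The remaining verifications are short uniqueness arguments. For $\theta=\id_H$, the identity of $W(H)$ satisfies both intertwining relations, so $W(\id)=\id$ by uniqueness. For $\theta_1,\theta_2\in\Aut(H)$ with $\alpha_i=W(\theta_i)$, the composition $\alpha_1\alpha_2$ satisfies
\[
\alpha_1\alpha_2\circ\tau_H=\alpha_1\circ\tau_H\circ\theta_2=\tau_H\circ\theta_1\theta_2,\quad \rho_H\circ\alpha_1\alpha_2=\theta_1\circ\rho_H\circ\alpha_2=\theta_1\theta_2\circ\rho_H,
\]
so by uniqueness $W(\theta_1\theta_2)=W(\theta_1)W(\theta_2)$. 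Combined with $W(\id)=\id$, this gives $W(\theta)W(\theta^{-1})=\id=W(\theta^{-1})W(\theta)$, so each $W(\theta)$ is a field automorphism and $W$ is a group homomorphism $\Aut(H)\to\Aut(W(H))$.

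I do not anticipate a serious obstacle: the entire argument is a formal abstract-nonsense consequence of the universal property established in Definition~\ref{defnwittmo&}. The only point requiring mild care is the bookkeeping of which side $\theta$ versus $\theta^{-1}$ appears on, so that the assignment $\theta\mapsto W(\theta)$ is a covariant (rather than contravariant) group homomorphism; the twist chosen above is exactly the one that makes $\alpha_1\alpha_2$ cover $\theta_1\theta_2$ rather than $\theta_2\theta_1$.
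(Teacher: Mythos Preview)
Your proof is correct and follows essentially the same approach as the paper: both twist the universal $W$-model by $\theta$ to form $(W(H),\theta^{-1}\circ\rho_H,\tau_H\circ\theta)$ and invoke universality to obtain $\alpha$. Your verification of $W(\theta_1\theta_2)=W(\theta_1)W(\theta_2)$ via uniqueness is in fact more explicit than the paper's, which merely asserts that the group homomorphism property follows from $W(\id)=\id$ together with $\tau\circ\theta=\alpha\circ\tau$.
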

 \proof
 Let $\theta\in \text{Aut}(H)$ be an automorphism  of $H$. Let $(W(H)=K,\rho,\tau)$ be the universal $W$-model for $H$. Then, we consider the triple: $(K,\rho'=\theta^{-1}\circ\rho,\tau'=\tau\circ\theta)$.
 One sees that $\rho'$ is a homomorphism of hyperfields, that $\tau'$ is multiplicative and that $\rho'(\tau'(x))= \theta^{-1}(\rho(\tau(\theta(x))))=x$. Thus $(K=W(H),\rho',\tau')$ is also a $W$-model of $H$. Then it follows from  universality  that there exists a field homomorphism $\alpha: K\to K$ such that  the rules \eqref{19} hold, and in particular $\tau\circ\theta=\alpha\circ\tau$.
 From this and the fact that $\alpha$ is the identity when $\theta$ is the identity one deduces the existence of  the  group homomorphism \eqref{18}.\endproof

  When $H=\trop$ we derive from the above proposition the existence of a one parameter group  of automorphisms of $W(\trop)$ given by  the (images of the) $\theta_\lambda\in\text{Aut}(\trop)$ (\cf Proposition~\ref{propviro}). These form the one parameter group  of Frobenius  automorphisms
  \begin{equation}\label{froblambda}
  W(\theta_\lambda)=\arf_\lambda\in\text{Aut}(\text{Frac}(\Q[\R_+^\times])).
 \end{equation}
  The universal $W$-model $W(H)$ of a hyperfield $H$ (when it exists) inherits automatically the refined structure of the {\em field of quotients} of a natural ring
\begin{prop}\label{integ} Let $H$ be a hyperfield with a universal $W$-model $(W(H),\rho,\tau)$. Let $W_\Z(H)\subset W(H)$ (resp. $W_\Q(H)\subset W(H)$) be the (integral) subring (resp. sub $\Q$-algebra) generated by the $\tau(x)$'s, $x\in H$. Then one has
\begin{equation}\label{20}
W(H)=\text{Frac}(W_\Z(H))=\text{Frac}(W_\Q(H)).
\end{equation}
\end{prop}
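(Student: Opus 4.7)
The plan is to use the universal property of $W(H)$ in the standard categorical way: produce a sub-$W$-model inside $W(H)$ and use uniqueness of the universal morphism to show it must exhaust $W(H)$.

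First I would set $K := \mathrm{Frac}(W_\Z(H))$. Since $W_\Z(H)$ is a subring of the field $W(H)$ it is an integral domain, and $K$ embeds as a subfield of $W(H)$ via the canonical inclusion $i\colon K\hookrightarrow W(H)$. By construction of $W_\Z(H)$ as the subring generated by $\tau(H)$, the multiplicative section factors as $\tau\colon H\to W_\Z(H)\subset K$. Restricting $\rho\colon W(H)\to H$ to $K$ gives a map $\rho_K:=\rho\circ i\colon K\to H$, and because the hyperfield homomorphism axioms (preservation of $0$, $1$, products, and $\rho(x+y)\in\rho(x)+\rho(y)$) restrict tautologically to a subfield, $\rho_K$ is again a homomorphism of hyperfields. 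The identity $\rho_K\circ\tau=\rho\circ\tau=\mathrm{id}_H$ still holds, so $(K,\rho_K,\tau)$ is a $W$-model of $H$.

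Next I would invoke universality. Since $(W(H),\rho,\tau)$ is the universal $W$-model of $H$, there exists a unique field homomorphism $\alpha\colon W(H)\to K$ satisfying
\begin{equation*}
\tau=\alpha\circ\tau,\qquad \rho=\rho_K\circ\alpha.
\end{equation*}
Composing with $i$ yields a field endomorphism $i\circ\alpha\colon W(H)\to W(H)$ which fits into the same equations with target $(W(H),\rho,\tau)$, i.e.\ $\tau=(i\circ\alpha)\circ\tau$ and $\rho=\rho\circ(i\circ\alpha)$. The identity map is another such endomorphism. By the uniqueness clause in the universal property of $(W(H),\rho,\tau)$, these must coincide, so $i\circ\alpha=\mathrm{id}_{W(H)}$. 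In particular $i$ is surjective, forcing $K=W(H)$, which is exactly $W(H)=\mathrm{Frac}(W_\Z(H))$.

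Finally, the chain of inclusions $W_\Z(H)\subset W_\Q(H)\subset W(H)$ gives $\mathrm{Frac}(W_\Z(H))\subset\mathrm{Frac}(W_\Q(H))\subset W(H)$, and since the outermost terms have just been shown to agree, all three fields coincide, proving \eqref{20}. The only delicate point is the verification that the restriction $\rho_K$ still qualifies as a hyperfield homomorphism so that $(K,\rho_K,\tau)$ is genuinely a $W$-model — but this is immediate because $K$ is a subfield of $W(H)$ and $\rho$ was already a hyperfield homomorphism; no new hyperoperations are involved on the source side.
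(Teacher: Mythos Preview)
Your proof is correct and follows essentially the same approach as the paper: form the sub-$W$-model $K=\mathrm{Frac}(W_\Z(H))$ inside $W(H)$ and invoke universality to produce $\alpha\colon W(H)\to K$. The only minor difference is in the closing step---the paper argues directly that $\alpha$ is bijective (surjective because its image contains the generators $\tau(x)$, injective as a field map), whereas you use the uniqueness clause to force $i\circ\alpha=\mathrm{id}_{W(H)}$; your version is arguably cleaner since it yields $K=W(H)$ as subfields rather than merely $K\cong W(H)$, and your sandwich argument for $\mathrm{Frac}(W_\Q(H))$ is more efficient than the paper's ``similar proof''.
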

\begin{proof} Let $K = \text{Frac}(W_\Z(H))\subset W(H)$. The map $\tau: H \to W(H)$ has, by construction, image in $K$ and using the restriction $\rho_K$ of $\rho: W(H) \to H$ to $K$ one gets a $W$-model $(K,\rho_K,\tau_K)$ for $H$. Thus by universality there exists a field homomorphism $\alpha: W(H)\to K$ such that $\alpha\circ \tau_K(x) = \tau(x)$, $\forall x\in H$. Hence $\alpha$ is surjective on $K$ (as $K$ is generated by the $\tau(x)$'s) and also injective (as field homomorphism). Then $\alpha: W(H)\stackrel{\sim}{\to}K$ is a field isomorphism. A similar proof shows the second equality in \eqref{20}.
\end{proof}
\begin{example}{\rm Let $H =\sign=\{0, \pm 1\}$, be the hyperfield of signs (\cf \cite{CC2}, Definition~2.2)  then $W_\Z(\sign)=\Z$.}
\end{example}
\begin{example}{\rm Let $H = \trop$, then $W_\Q(\trop)=\Q[\R_+^\times]$.}
\end{example}

  In $p$-adic Hodge theory (\cf \cite{FF}, \S5) one defines for a finite extension $E$ of $\Q_p$ with residue field $\F_q$ and for any (real) valued complete, algebraically closed field $F$ of characteristic $p$  extension of an algebraically closed field $k|\F_q$, a ring homomorphism $\theta: W_{\cO_E}(\cO_F) \to \C_p$, $\theta(\sum_{n\ge 0}[x_n]\pi^n) = \sum_{n\ge 0}x_n^{(0)}\pi^n$ ($\pi=\pi_E$ is a chosen uniformizer of $\cO_E$).

  At the real archimedean place, we have the following counterpart

\begin{prop}\label{proptheta} There exists a unique ring homomorphism $\theta: W_\Q(\trop)\to \R$ such that
\begin{equation}\label{22}
\theta([x]) =\theta(\tau(x))= x^{(0)} = x,\quad\forall x\in \trop.
\end{equation}
\end{prop}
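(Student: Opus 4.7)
The key simplification is that everything has already been reduced to a concrete group algebra. By the example immediately preceding the proposition, $W_\Q(\trop) = \Q[\R_+^\times]$, where the \te lift $\tau$ sends a positive real $x$ to the grouplike element $[x]$, satisfies $\tau(-x)=-[x]$ by \eqref{15}, and $\tau(0)=0$. In particular, the $\Q$-algebra generated by the image of $\tau$ is exactly the rational group algebra of the multiplicative group $\R_+^\times$.

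For existence, I would invoke the universal property of the group algebra. The tautological inclusion $\iota:\R_+^\times\hookrightarrow \R^\times\subset \R$ is a multiplicative group homomorphism, so it extends uniquely to a $\Q$-algebra homomorphism
\begin{equation*}
\theta:\Q[\R_+^\times]\longrightarrow \R,\qquad \theta\Bigl(\sum_i a_i[x_i]\Bigr)=\sum_i a_i x_i,\quad a_i\in\Q,\ x_i\in\R_+^\times.
\end{equation*}
By construction $\theta$ is multiplicative on the generators (as $[x][y]=[xy]$ and $xy=\theta([x])\theta([y])$), so it is a ring homomorphism, and $\theta([x])=x$ for $x>0$. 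It remains to check the prescribed identity $\theta(\tau(x))=x$ on all of $\trop=\R$. For $x=0$ we have $\tau(0)=0\mapsto 0$, and for $x<0$ one uses $\tau(x)=-[|x|]$ together with $\Q$-linearity to get $\theta(\tau(x))=-|x|=x$.

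For uniqueness, observe that any ring homomorphism $\theta':W_\Q(\trop)\to\R$ satisfying $\theta'(\tau(x))=x$ for every $x\in\trop$ is pinned down on the set of generators $\{\tau(x):x\in\trop\}$; by $\Q$-algebra linearity this determines $\theta'$ on $W_\Q(\trop)=\Q[\R_+^\times]$, and forces $\theta'=\theta$.

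There is no serious obstacle here; the only point requiring a moment of care is the bookkeeping between the three cases $x>0$, $x=0$, $x<0$ in the definition of the section $\tau$, which is settled by \eqref{15}. The content of the proposition is essentially that the universal $p$-adic covering $\theta:W(\cO_F)\to\C_p$ has as its archimedean analogue nothing more mysterious than the $\Q$-linear extension of the inclusion $\R_+^\times\hookrightarrow\R$ to the rational group algebra.
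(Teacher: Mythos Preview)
Your proof is correct and follows essentially the same approach as the paper: both reduce to the identification $W_\Q(\trop)=\Q[\R_+^\times]$ and then extend the inclusion $\R_+^\times\hookrightarrow\R$ by $\Q$-linearity to obtain $\theta(\sum_i a_i[x_i])=\sum_i a_i x_i$. Your version is simply more explicit, spelling out the universal property of the group algebra, the case analysis for $x>0$, $x=0$, $x<0$ via \eqref{15}, and the uniqueness argument, all of which the paper leaves implicit in a one-line proof.
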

\begin{proof} It follows from Proposition~\ref{integ} that $W_\Q(\trop) = \Q[\R_+^\times]$ and thus the natural map $\R_+^\times\to\R$ extends by linearity and uniquely to a ring homomorphism ($[~]=\tau$)
\begin{equation}\label{23}
\theta(\sum_ia_i[x_i]) = \sum_ia_ix_i\in\R.
\end{equation}
\end{proof}

\section{Universal formal pro-infinitesimal thickening of the field $\R$}\label{entrop}

Theorem~\ref{thmuni} states the existence of a universal algebraic object whose definition is independent of the completeness condition \eqref{projcond} of Proposition \ref{propwittiso}. Given a universal object among triples $(A,\rho,\tau)$ where $A$ is a (commutative) ring, $\rho:A\to R$ is a ring homomorphism with multiplicative section
$\tau:R\to A$, the corresponding data obtained by passing to the completion $\varprojlim_n A/\Ker(\rho)^n$ is automatically universal among the triples which fulfill \eqref{projcond}. This suggests to consider
the homomorphism  $\theta: W_\Q(\trop)\to \R$ of Proposition~\ref{proptheta} and introduce the following
  \begin{defn} The universal formal pro-infinitesimal thickening $\R_\infty$ of $\R$ is the $\Ker(\theta)$-adic completion of $W_\Q(\trop)$, \ie \[\R_\infty=\varprojlim_n W_\Q(\trop)/\Ker(\theta)^n.\]
  \end{defn}
 Next theorem shows that $\R_\infty$ has a richer structure than the ring $\R[[T]]$ of formal power series  with real coefficients. In fact we prove that the real vector space $\Ker(\theta)/\Ker(\theta)^2$ is infinite dimensional.

\begin{thm}\label{thmkerth}
$(i)$~Let $\ell:\R_+^\times\to \R$ be a  group homomorphism, then the  map
 \begin{equation}\label{taul}
    \cT_\ell(X)(z):=\sum_ia_i e^{\log(x_i)+ (z-1)\ell(x_i)}\qqq X=\sum_ia_i[x_i]\in W_\Q(\trop)
\end{equation}
defines a ring homomorphism $\cT_\ell:W_\Q(\trop)\to C^\infty(\R)$ to the ring of smooth real functions and
\begin{equation}\label{valatval}
   \theta(X)=\cT_\ell(X)(1),~\forall X\in W_\Q(\trop).
\end{equation}

$(ii)$~The Taylor expansion at $z=1$ induces a  ring homomorphism
\begin{equation}\label{surjectl}
   \R_\infty\stackrel{\cT_\ell}{\to}\R[[z-1]], \ \
\end{equation}
which is surjective if $x\ell(x)+(1-x)\ell(1-x)\neq 0$ for some $x\in \R_+^\times$.

$(iii)$~$\Ker(\theta)/\Ker(\theta)^2$ is an infinite dimensional real vector space  and the ``periods" $\pi_p=[p]-p\in \Ker(\theta)/\Ker(\theta)^2$, for $p$ a prime number, are linearly independent over $\R$.
\end{thm}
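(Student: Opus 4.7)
The plan is to treat the three parts in order, using the identification $W_\Q(\trop) = \Q[\R_+^\times]$ from Proposition~\ref{integ}. For part $(i)$, on a generator $[x]$ with $x\in\R_+^\times$ one has $\cT_\ell([x])(z) = x\, e^{(z-1)\ell(x)}$. The group-homomorphism property $\ell(xy)=\ell(x)+\ell(y)$ gives multiplicativity on generators, and $\Q$-linear extension to $\Q[\R_+^\times]$ produces a well-defined ring homomorphism into $C^\infty(\R)$. Evaluating at $z=1$ recovers $\theta$ by \eqref{23}, which proves \eqref{valatval}.

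For part $(ii)$, let $I:=\Ker(\theta)$. Since $\cT_\ell(X)(1)=\theta(X)$, elements of $I$ have Taylor expansion at $z=1$ lying in $(z-1)\R[[z-1]]$; as $\cT_\ell$ is a ring homomorphism, $I^n$ maps into $(z-1)^n\R[[z-1]]$. Hence the Taylor map is continuous for the $I$-adic and $(z-1)$-adic topologies and factors through $\R_\infty$, which yields \eqref{surjectl}. For surjectivity under the hypothesis, choose $x\in(0,1)$ with $c:=x\ell(x)+(1-x)\ell(1-x)\neq 0$ and set $X_0:=[x]+[1-x]-[1]\in I$; a direct expansion gives $\cT_\ell(X_0)(z)=c(z-1)+O((z-1)^2)$. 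Given $f=\sum_{n\ge 0}r_n(z-1)^n\in\R[[z-1]]$, I would construct compatible lifts $\bar Z_N\in W_\Q(\trop)/I^{N+1}$ with $\cT_\ell(\bar Z_N)\equiv f\pmod{(z-1)^{N+1}}$ by induction on $N$: at step $N$, correct the $N$-th Taylor coefficient by adding a term of the form $[r'/c^N]\cdot X_0^N\in I^N$, whose Taylor image is $r'(z-1)^N+O((z-1)^{N+1})$; such multiplicative lifts $[\alpha]$ exist for every $\alpha\in\R_+^\times$, with the extension to $\R$ using $\tau(-\alpha)=-\tau(\alpha)$. The resulting coherent sequence defines an element of $\R_\infty$ mapping to $f$. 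The most delicate point here is bookkeeping: each correction $[r'/c^N]X_0^N$ also contributes higher-order tails, and one must verify that these can be absorbed into the later steps.

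For part $(iii)$, one has $\pi_p=[p]-p\cdot[1]\in I$ (the integer $p$ is a rational coefficient) and
\begin{equation*}
\cT_\ell(\pi_p)(z)=p\bigl(e^{(z-1)\ell(p)}-1\bigr)=p\,\ell(p)(z-1)+O((z-1)^2).
\end{equation*}
If a finite real linear combination $\sum_p\lambda_p\pi_p$ lies in $I^2$, applying $\cT_\ell$ and reducing modulo $(z-1)^2$ yields $\sum_p\lambda_p\,p\,\ell(p)=0$ for every group homomorphism $\ell:\R_+^\times\to\R$. Since $\{\log p\ :\ p\text{ prime}\}$ is $\Q$-linearly independent in $\R$ (by unique factorization), this set extends to a $\Q$-basis of $\R$ and, for each prime $p_0$, one can define a $\Q$-linear functional $\tilde\ell_{p_0}:\R\to\R$ with $\tilde\ell_{p_0}(\log p_0)=1$ and $\tilde\ell_{p_0}(\log q)=0$ for primes $q\neq p_0$. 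The group homomorphism $\ell_{p_0}:=\tilde\ell_{p_0}\circ\log$ then separates $\pi_{p_0}$: applied to the relation above it gives $\lambda_{p_0}p_0=0$, hence $\lambda_{p_0}=0$. Thus the classes of the $\pi_p$ are $\R$-linearly independent in $\Ker(\theta)/\Ker(\theta)^2$, and since there are infinitely many primes $\Ker(\theta)/\Ker(\theta)^2$ is an infinite-dimensional real vector space.
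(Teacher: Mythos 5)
Your proof is correct and follows essentially the same route as the paper's: for (i) you observe that multiplicativity reduces to the group-homomorphism property of $\ell$, for (ii) you use powers of $s(x)=1-[x]-[1-x]$ (your $X_0=-s(x)$) together with Teichm\"uller lifts to hit each Taylor coefficient in turn, and for (iii) you reduce linear independence to the $\Q$-linear independence of $\{\log p\}$ by constructing suitable homomorphisms $\ell$. The only cosmetic differences are that you spell out the $I$-adic induction in (ii) more explicitly than the paper does, and in (iii) you build explicit ``coordinate'' homomorphisms $\ell_{p_0}$ rather than invoking divisibility/injectivity of $\R$ to prescribe arbitrary values $\ell(p_i)$; both are the same argument in substance.
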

\proof $(i)$~For any $z\in \R$ the  map $\log+ (z-1)\ell$ defines a group homomorphism $\R_+^\times\to \R$ and the conclusion follows.\newline
$(ii)$~It follows from $(i)$ and \eqref{valatval} that one obtains the ring homomorphism \eqref{surjectl} since for $X\in \Ker(\theta)^n$ the function  $\cT_\ell(X)(z)$ vanishes of order $\geq n$ at $z=1$.
We show, under the assumption of $(ii)$, that $\cT_\ell$ as in \eqref{surjectl} is surjective. Let $x\in \R$ with $x\ell(x)+(1-x)\ell(1-x)\neq 0$. One has $s(x):=1-[x]-[1-x]\in \Ker(\theta)$; the first derivative of $\cT_\ell(s(x))(z)$ at $z=1$ is equal to $-(x\ell(x)+(1-x)\ell(1-x))$ and it does not vanish. Using the $\R$-linearity which follows from $W_\Q(\trop)/\Ker(\theta)=\R$ and by implementing the powers $s(x)^n$, one derives the surjectivity.\newline
$(iii)$~One has $\pi_n:=[n]-n\in \Ker(\theta)$ for any integer $n$ and hence for any prime number $n=p$. Next we show that these elements are linearly independent in $\Ker(\theta)/\Ker(\theta)^2$. The latter is a vector space over $W_\Q(\trop)/\Ker(\theta)=\R$, and the multiplication by a real number $y\in \R$ is provided by the multiplication by any $s\in  W_\Q(\trop)$ such that $\theta(s)=y$. In particular we can always choose the lift of the form $s=a[b]$ where $a\in \Q$ and $b\in \R_+^\times$. Assume now that there is a linear relation of the form
\begin{equation}\label{relat}
   X= \sum_i a_i[b_i]\pi_{p_i}\in \Ker(\theta)^2
\end{equation}
where $a_i\in \Q$, $b_i\in \R_+^\times$ and $p_i$ are distinct primes. Let $\ell:\R_+^\times\to \R$ be a  group homomorphism, then by using $(ii)$ we see that $\cT_\ell(X)$ vanishes at order $\geq 2$ at $z=1$, and thus
%\begin{equation*}
   $\left(\frac{d}{dz}\right)_{z=1}\cT_\ell (X)=0$.
%\end{equation*}
Using $\pi_{p_i}\in \Ker(\theta)$, we derive, using \eqref{taul},
\begin{equation*}
\left(\frac{d}{dz}\right)_{z=1}\cT_\ell(a_i[b_i]\pi_{p_i})=a_i b_i p_i\ell(p_i)
    \end{equation*}
so that $\sum_i a_i b_i p_i\ell(p_i)=0$.
Since the logarithms of prime numbers are rationally independent, and since the additive group $\R$ is divisible, hence injective among abelian groups, one can construct a group homomorphism $\ell:\R_+^\times\to \R$ such that the values of $\ell(p_i)$ are arbitrarily chosen real numbers.  In view of the fact that the above relation is always valid, we derive that all the coefficients $a_i b_i p_i$ must  vanish and that the original relation is therefore trivial.\endproof

Next, we extend the above construction to obtain linear forms on $\Ker(\theta)/\Ker(\theta)^2$.
\begin{lem}\label{quasi}$(i)$~Let $\delta:W_\Q\to \R$ be a $\Q$-linear map such that
\begin{equation}\label{qlin}
    \delta(fg)=\theta(f)\delta(g)\qqq g\in \Ker(\theta).
\end{equation}
Then $\delta$  vanishes on $\Ker(\theta)^2$ and it defines an $\R$-linear form on $\Ker(\theta)/\Ker(\theta)^2$.

$(ii)$~Let $\psi:\R\to \R$ be such that $\psi(-x)=-\psi(x)$ for all $x\in \R$ and
\begin{equation}\label{quasi1}
    \psi(x(y+z))-x\psi(y+z)=\psi(xy)-x\psi(y)+\psi(xz)-x\psi(z)\qqq x,y,z\in\R
\end{equation}
then the following equality defines a $\Q$-linear map $\delta_\psi:W_\Q\to \R$ fulfilling \eqref{qlin}
\begin{equation}\label{qlin1}
   \delta_\psi(\sum_j a_j [x_j]):= \sum_j a_j \psi(x_j)\qqq a_j\in \Q, \ x_j\in \R.
\end{equation}
\end{lem}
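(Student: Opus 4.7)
For part (i), the plan is to observe that $\Ker(\theta)^2$ is generated, as a $\Q$-vector space, by products $fg$ with $f,g \in \Ker(\theta)$. Applying \eqref{qlin} to any such product gives $\delta(fg) = \theta(f)\delta(g) = 0$, so by $\Q$-linearity $\delta$ vanishes on $\Ker(\theta)^2$. To upgrade the induced $\Q$-linear map $\bar{\delta}: \Ker(\theta)/\Ker(\theta)^2 \to \R$ to an $\R$-linear form, I would invoke the $W_\Q(\trop)/\Ker(\theta) \cong \R$-module structure on $\Ker(\theta)/\Ker(\theta)^2$ (already used implicitly in the proof of Theorem~\ref{thmkerth}): scalar multiplication by $y \in \R$ is multiplication by any lift $s \in W_\Q(\trop)$ with $\theta(s) = y$. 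Condition \eqref{qlin} is exactly what makes $\bar\delta$ equivariant, since $\bar\delta(s \cdot [g]) = \theta(s)\delta(g) = y\,\bar\delta(g)$, and the answer is independent of the chosen lift because $\delta$ already kills $\Ker(\theta)^2$.

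For part (ii) the plan has three steps. First, well-definedness of \eqref{qlin1}: since Proposition~\ref{integ} gives $W_\Q(\trop) = \Q[\R_+^\times]$ with the conventions $[-x]=-[x]$ and $[0]=0$, the hypothesis $\psi(-x) = -\psi(x)$ (which forces $\psi(0)=0$) makes the formula compatible with rewriting $\sum a_j [x_j]$ in the canonical form indexed by $\R_+^\times$. Second, $\Q$-linearity of $\delta_\psi$ is then immediate from the defining expression.

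The core step is verifying \eqref{qlin}. Since both sides are $\Q$-linear in $f$, it suffices to take $f = [x]$ for $x \in \R$ and prove $\delta_\psi([x]\,g) = x\,\delta_\psi(g)$ whenever $g = \sum_k b_k [y_k]$ satisfies $\sum_k b_k y_k = 0$. Setting $\phi_x(y) := \psi(xy) - x\psi(y)$, this reduces to showing $\sum_k b_k \phi_x(y_k) = 0$. The key reformulation is that \eqref{quasi1} reads precisely $\phi_x(y+z) = \phi_x(y) + \phi_x(z)$, i.e.\ $\phi_x:\R\to\R$ is additive. An additive map of abelian groups is automatically $\Q$-linear (and satisfies $\phi_x(0)=0$), whence
\[
\sum_k b_k \phi_x(y_k) \;=\; \phi_x\!\left(\sum_k b_k y_k\right) \;=\; \phi_x(0) \;=\; 0,
\]
which closes the argument.

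The only subtle point in the entire proof is the identification of \eqref{quasi1} with the additivity of the ``defect'' $\phi_x$; once this is spotted, both statements reduce to bookkeeping. There is no significant obstacle — the hypotheses have been shaped so that the conclusions fall out cleanly. The real content of the lemma is structural: it will supply concrete linear forms on the $\R$-vector space $\Ker(\theta)/\Ker(\theta)^2$ of Theorem~\ref{thmkerth}, complementing the family coming from group homomorphisms $\ell$ via \eqref{taul}.
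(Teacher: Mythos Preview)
Your proof is correct and follows essentially the same route as the paper's own argument: both parts reduce exactly as you describe, and in particular your defect function $\phi_x(y)=\psi(xy)-x\psi(y)$ is the paper's map $L$, with the same observation that \eqref{quasi1} is precisely its additivity. Your exposition is slightly more explicit (e.g.\ spelling out why the reduction to $f=[x]$ is legitimate and why additivity gives $\Q$-linearity), but there is no substantive difference in strategy.
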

\proof $(i)$~For $f,g\in \Ker(\theta)$, it follows from \eqref{qlin} that $\delta(fg)=0$, thus $\delta$  vanishes on $\Ker(\theta)^2$. The action of $s\in \R$ on $g\in \Ker(\theta)/\Ker(\theta)^2$ is given by $fg$ for any $f\in W_\Q$ with $\theta(f)=s$. Thus the $\R$-linearity of the restriction of $\delta$ to $\Ker(\theta)$ follows from \eqref{qlin}.

$(ii)$~By construction the map $\delta$ defined by \eqref{qlin1} is well defined since $\psi$ is odd, and $\Q$-linear. To check
\eqref{qlin} we can assume that $f=[x]$ for some $x\in \R$. One then has
\begin{equation}\label{quasi2}
   \delta(fg)-\theta(f)\delta(g)=\sum_j b_j(\psi(xy_j)-x\psi(y_j))\qqq g=\sum_j b_j [y_j].
\end{equation}
The map $L:\R\to \R$ given by $L(y)=\psi(xy)-x\psi(y)$ is additive by \eqref{quasi1} and thus
\begin{equation*}
    \sum_j b_j(\psi(xy_j)-x\psi(y_j))=\sum_j b_j L(y_j)=L(\sum_j b_j y_j).
\end{equation*}
When $g\in \Ker(\theta)$, one derives $\theta(g)=\sum_j b_j y_j=0$ and thus one obtains \eqref{qlin}.\endproof

The next statements show how the entropy appears naturally to define ``periods".

 \begin{lem}\label{entropysym} $(i)$~The symbol $s(x):=1-[x]-[1-x]$ defines a map  $\R\to\Ker(\theta)\subset W_\Q(\trop)$ such that
\begin{eqnarray}
% \nonumber to remove numbering (before each equation)
  (a) &:& s(1-x)=s(x)\nonumber \\
  (b) &:& s(x+y)=s(y)+[1-y]s(\frac{x}{1-y})+[y] s(-\frac x y)\nonumber  \\
  (c) &:& [x] s(1/x)=-s(x). \nonumber
\end{eqnarray}
$(ii)$~The $\R$-linear span in $\Ker(\theta)/\Ker(\theta)^2$ of the $s(x)$, for $x\in \R$ generates $\Ker(\theta)/\Ker(\theta)^2$.
\end{lem}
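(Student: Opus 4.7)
For part (i), I would verify each identity by direct computation, using only multiplicativity of $\tau$ and $\tau(-1)=-1$ (established in the proof of Theorem~\ref{thmuni}). That $s(x)\in\Ker(\theta)$ is immediate: $\theta(s(x))=1-x-(1-x)=0$. Identity (a) is clear by inspection. For (c),
\begin{equation*}
[x]\,s(1/x)=[x]-[x][1/x]-[x][1-1/x]=[x]-1-[x-1]=[x]-1+[1-x]=-s(x),
\end{equation*}
using $[x-1]=-[1-x]$. For (b), expand the right-hand side using $[1-y][x/(1-y)]=[x]$, $[1-y][1-x/(1-y)]=[1-y-x]$, $[y][-x/y]=-[x]$, and $[y][1+x/y]=[x+y]$; the cancellations leave $1-[x+y]-[1-x-y]=s(x+y)$.

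For part (ii), my plan is to fix a $\Q$-basis $B\subset\R$ containing $1$, write each $x\in\R$ uniquely as $x=\sum_b c_b(x)\,b$ with $c_b(x)\in\Q$ of finite support, and introduce the $\Q$-linear section $\sigma(x):=\sum_b c_b(x)[b]$ of $\theta$. Setting $\mu_x:=[x]-\sigma(x)\in\Ker(\theta)$, any $\eta=\sum_i q_i[x_i]\in\Ker(\theta)$ satisfies $\eta=\sum_i q_i\mu_{x_i}+\sigma(\sum_i q_i x_i)=\sum_i q_i\mu_{x_i}$; hence the $\R$-vector space $V:=\Ker(\theta)/\Ker(\theta)^2$ is spanned over $\R$ by the classes $\bar\mu_x$, and it suffices to show each $\bar\mu_x$ lies in the $\R$-linear span of $\{\bar s(z)\}_{z\in\R}$.

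The engine is the identity obtained by multiplying $s(y)=1-[y]-[1-y]$ by $[x]$ and using multiplicativity of $\tau$:
\begin{equation*}
[x]\,s(y)=[x]-[xy]-[x(1-y)].
\end{equation*}
Because $\sigma$ is $\Q$-linear and $x-xy-x(1-y)=0$, the right-hand side equals $\mu_x-\mu_{xy}-\mu_{x(1-y)}$. Reducing modulo $\Ker(\theta)^2$ and using $\overline{[x]\,s(y)}=x\,\bar s(y)$ in $V$ (by the $\R$-action), one obtains $\bar\mu_x-\bar\mu_{xy}-\bar\mu_{x(1-y)}=x\,\bar s(y)$. Writing $u=xy$, $v=x(1-y)$, this shows that in the quotient $\overline V:=V/\R\text{-span}\{\bar s(z):z\in\R\}$ the map $\bar\mu:\R\to\overline V$ satisfies $\bar\mu_{u+v}=\bar\mu_u+\bar\mu_v$ whenever $u+v\ne 0$; together with $\mu_0=0$ and $\mu_{-u}=-\mu_u$ (which follow from $[-u]=-[u]$ and $c_b(-u)=-c_b(u)$), the map $\bar\mu$ is additive and therefore $\Q$-linear.

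To finish, I would use a second exact identity in $W_\Q(\trop)$, valid for all $r,x\in\R$:
\begin{equation*}
[r]\,\mu_x=[rx]-\sum_b c_b(x)\,[rb]=\mu_{rx}-\sum_b c_b(x)\,\mu_{rb},
\end{equation*}
the first equality by multiplicativity of $\tau$, the second by $\Q$-linearity of $\sigma$ applied to $rx=\sum_b c_b(x)\cdot rb$. The left-hand side equals $r\,\bar\mu_x$ in $V$ by the definition of the $\R$-action, while in $\overline V$ the $\Q$-linearity of $\bar\mu$ gives $\bar\mu_{rx}=\sum_b c_b(x)\,\bar\mu_{rb}$; hence $r\,\bar\mu_x=0$ in $\overline V$ for every $r,x\in\R$. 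Specializing to $r=1$ yields $\bar\mu_x\in\R\text{-span}\{\bar s(z):z\in\R\}$ for all $x$, which concludes the argument. The main obstacle to organize cleanly is pinning down the right generating identity: a single formula $[x]\,s(y)=[x]-[xy]-[x(1-y)]$, combined with two independent applications of the $\Q$-linearity of $\sigma$ (once to collapse differences of sections, once to promote additivity of $\bar\mu$ to $\Q$-linearity), carries the whole argument.
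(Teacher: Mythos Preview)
Your treatment of (i) is correct and matches the paper's computations.

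For (ii) your argument is correct but more elaborate than necessary, and the ``second exact identity'' is redundant. Once you have shown that $\bar\mu:\R\to\overline V$ is additive (hence $\Q$-linear), you are already finished: for each basis element $b\in B$ one has $\mu_b=[b]-\sigma(b)=0$ by construction, so $\bar\mu_x=\sum_b c_b(x)\,\bar\mu_b=0$ in $\overline V$ directly. Your second identity with $r=1$ simply reproduces this observation; for general $r$ it gives nothing further.

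The paper's route is shorter and avoids the choice of a $\Q$-basis altogether. It uses the same key identity in the form $[u+v]\,s\!\left(\dfrac{u}{u+v}\right)=[u+v]-[u]-[v]$ (for $u+v\neq 0$), which shows at once that every class $[u+v]-[u]-[v]$ lies in the $\R$-span of the $s(z)$'s. Given $f\in\Ker(\theta)$, one clears denominators to write $nf=\sum_j[x_j]-\sum_k[y_k]$ with $\sum_j x_j=\sum_k y_k$, and then iterates this additivity relation to collapse $\sum_j[x_j]$ to $[\sum_j x_j]$ and likewise for the $y_k$'s; the two bracketed sums cancel. Your approach trades this elementary telescoping for the machinery of a $\Q$-linear section $\sigma$; it works, but buys nothing extra here.
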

 \proof
$(i)$~The symbol $[x]$ extends  to $\R$ by $[-x]=-[x]$. The equality  $(a)$ holds by construction.
We check (b) ($(c)$ is checked in the same way). One has
\begin{equation*}
    [1-y]s(\frac{x}{1-y})=[1-y]-[x]-[1-y-x], \ \ [y] s(-\frac x y)=[y]+[x]-[x+y]
\end{equation*}
\begin{equation*}
  [1-y]s(\frac{x}{1-y})+[y] s(-\frac x y)=[1-y]-[1-y-x]+[y]-[x+y]=s(x+y)-s(y).
\end{equation*}

$(ii)$~The $\R$-linear span in $\Ker(\theta)/\Ker(\theta)^2$ of the $s(x)$ contains all the $[x+y]s(x/(x+y))$ for $x+y\neq 0$, and hence all the $[x+y]-[x]-[y]$. Let $f\in \Ker(\theta)$, then a non-zero integer multiple of $f$ is of the form
\begin{equation*}
    nf=\sum_j [x_j]-\sum_k [y_k]\,, \ \ \sum_j x_j=\sum_k y_k
\end{equation*}
and both $\sum_j [x_j]-[\sum_j x_j]$ and $\sum_k [y_k]-[\sum_k y_k]$ belong to the $\R$-linear span in $\Ker(\theta)/\Ker(\theta)^2$ of the $s(z)$.
\endproof
\begin{thm}\label{thmentropysym} The space  $\Ker(\theta)/\Ker(\theta)^2$ is the infinite dimensional $\R$-vector space $\Omega$ generated by the symbols $\varepsilon(x)$, $x\in \R$, with relations
\begin{eqnarray}
% \nonumber to remove numbering (before each equation)
  (A) &:& \varepsilon(1-x)=\varepsilon(x)\nonumber \\
  (B) &:& \varepsilon(x+y)=\varepsilon(y)+(1-y)\,\varepsilon(\frac{x}{1-y})+y \, \varepsilon(-\frac x y)\qqq y\notin\{0,1\}\nonumber  \\
  (C) &:& x \,\varepsilon(1/x)=-\varepsilon(x)\qqq x\neq 0. \nonumber
\end{eqnarray}
\end{thm}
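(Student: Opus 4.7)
The plan is to construct the natural $\R$-linear surjection $\pi\colon\Omega\twoheadrightarrow\Ker(\theta)/\Ker(\theta)^2$ and then prove that it is injective by separating points of $\Omega$ with a family of linear functionals supplied by Lemma~\ref{quasi}. The map is defined by $\varepsilon(x)\mapsto s(x):=1-[x]-[1-x]$. Lemma~\ref{entropysym}(i) shows that $s$ satisfies in $\Ker(\theta)/\Ker(\theta)^2$ the analogues of relations (A), (B), (C), where the real factors $(1-y)$, $y$, $x$ are realized as multiplication by the \te lifts $[1-y]$, $[y]$, $[x]$ which induce the correct $\R$-scalar action on the quotient; the three defining relations of $\Omega$ are therefore preserved and $\pi$ is well-defined. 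Lemma~\ref{entropysym}(ii) yields surjectivity, and together with Theorem~\ref{thmkerth}(iii) this already establishes that $\Omega$ is infinite dimensional.

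To prove injectivity I would produce an ample supply of $\R$-linear forms on $\Ker(\theta)/\Ker(\theta)^2$ via Lemma~\ref{quasi}. For each group homomorphism $\ell\colon\R_+^\times\to\R$, define the odd function $\psi_\ell\colon\R\to\R$ by $\psi_\ell(x):=x\,\ell(|x|)$ for $x\neq 0$ and $\psi_\ell(0):=0$. The multiplicativity of $\ell$ gives $\psi_\ell(xt)-x\,\psi_\ell(t)=xt\,\ell(|x|)$, which is $\R$-linear (a fortiori additive) in $t$, so hypothesis~\eqref{quasi1} is satisfied. Lemma~\ref{quasi}(ii) then produces an $\R$-linear form $\delta_{\psi_\ell}$ on $\Ker(\theta)/\Ker(\theta)^2$, whose pullback through $\pi$ is
\[
\phi_\ell(\varepsilon(x))=-x\,\ell(|x|)-(1-x)\,\ell(|1-x|)\in\Omega^*.
\]
One verifies directly, by the entropy-style computation that underlies the proof of (B), that $\phi_\ell$ indeed respects (A), (B), (C), so that $\phi_\ell$ is a genuine element of $\Omega^*$.

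The main obstacle is to show that the family $\{\phi_\ell\}_\ell$ separates points of $\Omega$. Given $\omega=\sum_{i=1}^n c_i\,\varepsilon(x_i)\in\ker\pi$, the identity $\phi_\ell(\omega)=0$ reads $\sum_i c_i\bigl(x_i\,\ell(|x_i|)+(1-x_i)\,\ell(|1-x_i|)\bigr)=0$ for every $\ell$. Choose in the finite multiplicative set $\{|x_i|,|1-x_i|\}\subset\R_+^\times$ a $\Q$-linearly independent subset $G_0$ generating the same $\Q$-subspace of the divisible group $(\R_+^\times,\cdot)$; since $(\R,+)$ is a divisible, hence injective, abelian group, $\ell$ can be prescribed arbitrarily on $G_0$. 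Regrouping the above relation by element of $G_0$ and varying $\ell$ forces every resulting coefficient to vanish, and a careful unwinding of the relations (A), (B), (C) --- the delicate step, requiring one to handle the possible $\Q$-linear dependencies between the $x_i$ and $1-x_i$ by iterated application of (B) to reduce $\omega$ to a normal form indexed by $G_0$ --- then yields $c_i=0$ for all $i$, hence $\omega=0$ in $\Omega$ and the theorem.
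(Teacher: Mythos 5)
Your reduction to the map $\pi\colon\Omega\to\Ker(\theta)/\Ker(\theta)^2$, $\varepsilon(x)\mapsto s(x)$, and the surjectivity argument via Lemma~\ref{entropysym} match the paper. The functionals $\delta_{\psi_\ell}$ with $\psi_\ell(x)=x\,\ell(|x|)$ are also a correct application of Lemma~\ref{quasi}; this is a legitimate way to produce $\R$-linear forms on $\Ker(\theta)/\Ker(\theta)^2$.

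The gap is in the injectivity argument. What you actually need is that for \emph{every} $M\in\Omega^*$ there is a $\delta$ on $\Ker(\theta)/\Ker(\theta)^2$ with $\delta\circ\pi=M$; then $\ker\pi\subset\bigcap_M\ker M=\{0\}$. You instead try to get by with the particular family $\phi_\ell=\delta_{\psi_\ell}\circ\pi$, and the burden shifts to proving that $\{\phi_\ell\}_\ell$ separates points of $\Omega$. That is not established: the $\psi_\ell(x)=x\ell(|x|)$ form a rather special class of quasi-derivations satisfying \eqref{quasi1}, and there is no argument that every linear form on $\Omega$ is detected by them. The sketch ``choose a $\Q$-basis $G_0$ of the multiplicative group generated by $\{|x_i|,|1-x_i|\}$, vary $\ell$, regroup, and then carefully unwind (A)--(C)'' is precisely where the difficulty is concentrated: the vanishing of the regrouped coefficients gives linear conditions on the $c_i$, but deducing $\omega=0$ from those conditions requires navigating the relations (B) applied repeatedly, and you give no normal-form algorithm or inductive scheme to carry that out. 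As it stands this is an assertion, not a proof.

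The paper avoids the separating-family problem altogether. Given \emph{any} $M\in\Omega^*$, set $H(x)=M(\varepsilon(x))$. Relations (A), (B), (C) for $H$ are exactly what makes $\phi(x,y)=(x+y)H\bigl(\tfrac{x}{x+y}\bigr)$ (with $\phi(x,-x)=0$) a symmetric $2$-cocycle on $(\R,+)$ with values in $\R$. Because $\R$ is a $\Q$-vector space (injective as an abelian group), this cocycle is a coboundary, $\phi=b\psi$, and the twist identity $b\psi_x(y,z)=x\,\phi(y,z)$ shows $\psi$ satisfies \eqref{quasi1}; after the odd-symmetrization $\psi\mapsto\tfrac12(\psi-\psi_{-1})$, Lemma~\ref{quasi}(ii) yields a $\delta_\psi$ with $\delta_\psi(s(x))=b\psi(x,1-x)=H(x)$, so $\delta_\psi\circ\pi=M$. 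That argument works uniformly in $M$ and is what actually closes the injectivity. If you want to salvage your route, you would need to show either that every coboundary $\psi$ arising this way agrees, modulo additive functions (which do not affect $\delta_\psi$ on $\Ker(\theta)$), with some $x\ell(|x|)$, or else give the missing normal-form reduction in $\Omega$; neither is done in the proposal.
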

\proof The map $\varepsilon(x)\mapsto s(x)\in \Ker(\theta)/\Ker(\theta)^2$ is well defined and surjective by Lemma \ref{entropysym}. We show that it is injective. Let $M$ be an $\R$-linear form on $\Omega$. Next we prove that there exists $\delta:W_\Q\to \R$ fulfilling \eqref{qlin}, such that
\begin{equation}\label{deltas}
    \delta(s(x))=M(\varepsilon(x))\qqq x\in \R.
\end{equation}
The injectivity then follows using $\R$-linearity to get $M(Z)=0$ for any $Z$ in the kernel. We now prove \eqref{deltas}. Let $H(x)=M(\varepsilon(x))$, then, as explained in
\cite{Ko} and
 Remark 5.3 of \cite{C},  the function $\phi(x,y)=(x+y)H(\frac{x}{x+y})$,  $\phi(x,-x)=0$,
is a two cocycle on the additive group of $\R$, with coefficients in $\R$. This symmetric  cocycle  defines an extension in the category of torsion free divisible abelian groups, \ie of $\Q$-vector spaces, and hence  is a coboundary, $\phi=b\psi$. With $\psi_x(y):=\psi(xy)$ one has $b\psi_x(y,z)=\phi(xy,xz)=x\phi(y,z)$. This shows that  $\psi$ fulfills \eqref{quasi1} and that replacing $\psi$ by $\frac 12(\psi-\psi_{-1})$ one can assume that $\psi$ is odd.
Then $\delta_\psi$ defined in \eqref{qlin1} fulfills \eqref{deltas}
since $\delta_\psi(s(x))=b\psi(x,1-x)=H(x)$. \endproof
\begin{rem}\label{log}{\rm
 All Lebesgue measurable group homomorphisms $\ell:\R_+^\times\to \R$ are of the form $x\mapsto \lambda\log x$, and yield the linear form on $\Omega$ given by the entropy function. In the next section \ref{sec:Frechet} we  investigate the ring homomorphism $\cT_\ell$ of \eqref{taul} given by the measurable choice $\ell=\log$.
}\end{rem}

\section{The $\mathbb R$-algebras of analytic functions and their canonical form}

%\section{The algebras of ``rigid analytic functions"}
\label{sec:Frechet}
%The $\R$-algebras of ``rigid analytic functions"

This section is concerned with the {\em topological} step inherent to the construction of the archimedean analogue of the rings which in the $p$-adic case are the analogues  in mixed characteristics of the ring of rigid analytic functions on the punctured unit disk in equal characteristics  (\cf Appendix~\ref{algebras}).
In Section~\ref{sec:2ndsection} we have seen that the structure of the $\Q$-algebras $W_\Q(\cO)\subset W_\Q(\trop)$ is inclusive of a one parameter group of automorphisms $\arf_\lambda=W(\theta_\lambda)\in \Aut(W_\Q(\trop))$ (\cf \eqref{froblambda}) preserving  $W_\Q(\cO)$, and of the ring homomorphism $\theta: W_\Q(\trop)\to \R$ (\cf Proposition~\ref{proptheta}). Thus the following map defines a ring homomorphism from the Witt ring $W_\Q(\cO)$ to the ring of real valued functions of one (positive) real variable endowed with the pointwise operations
\begin{equation}
  W_\Q(\cO)\to\mathcal F(\R_+^\times),\quad  x\to x(z)=\theta(\arf_z(x)) \qqq z>0.
\end{equation}
For $x \in W_\Q(\cO)$, one has $x=\sum_ia_i[x_i]$ with $x_i\in (0,1]$ (we keep the notation $[x]=\tau(x)$ of \S\ref{sec:2ndsection}). The function $x(z)=\sum_i a_i x_i^z$, $z>0$, is bounded by the norm $\|x\|_0=\sum_i|a_i|$. After performing the compactification of the  balls $\|x\|_0\leq R$ of this norm for the topology of simple convergence
\begin{equation}\label{simplecv}
    x_n\to x\Longleftrightarrow x_n(z)\to x(z)\qqq z>0,
\end{equation}
one obtains
 the Banach algebra $\btplusarc$  which is the real archimedean counterpart of the $p$-adic ring $\wittOovpi$ (\cf Appendix~\ref{algebras}).

 The main result of this section is stated in Theorem~\ref{thmdec} that describes the canonical expansion of the elements of $\btplusarc$. The following \S\ref{anasect} prepares the ground. In \S\ref{rn} we construct the Frechet $\R$-algebra obtained by completion for the analogue of the $\|.\|_\rho$ norms   and relate it to the Mikusinski field (\cf \S\ref{mikfieldr}).

\subsection{The algebra  $\btplusarc$}\label{anasect}

We denote by NBV (Normalized and of Bounded Variation) the class of real functions $\phi(\xi)$ of a real variable $\xi$ which are of bounded variations and normalized \ie point-wise left continuous and tending to $0$ as $\xi\to -\infty$. In fact we shall only work with functions that vanish for $\xi\leq 0$, and say that a real valued function $\phi$ on $(0,\infty)$ is NBV when its extension by  $0$ for $\xi \leq 0$ is NBV. Thus, saying that a function $\phi$ of bounded variation on $(0,\infty)$ is normalized just means that it is left continuous at every point of $(0,\infty)$. We refer to \cite{R}, Chapter 8.

We observe that with the notation of \eqref{taurgood}, for $a,b\in \R$, one has
\begin{equation}\label{acupb}
 a \smile b=a\Leftrightarrow |a|\geq |b|, \quad\text{and}\quad a=b\ \text{if}\ |a|= |b|.
\end{equation}
Throughout this section we continue to use the notation $[~]=\tau$ of \S\ref{sec:2ndsection}.
\begin{prop} \label{propnbv}
$(i)$~Let $\phi(\xi)$ be a real valued, left continuous function of $\xi\in (0,\infty)$ of bounded variation and let $V$ be its total variation. Then, there exists a measurable function $u\mapsto x_u\in [-1,1]\setminus \{0\}$ of $u \in [0,V)$ such that
$x_u\smile x_v=x_u$ for $u\leq v$  and
\begin{equation}\label{equalint}
    \int_0^V [x_u](z)du=\int_0^\infty e^{-\xi z}d\phi(\xi)\qqq z\in\R^\times_+.
\end{equation}
Moreover the function $u\mapsto x_u$ is unique almost everywhere (\ie except on a set of Lebesgue measure zero).

$(ii)$~Conversely, given $V<\infty$ and a measurable function $u\mapsto x_u\in [-1,1]\setminus \{0\}$ of $u \in [0,V)$, such that
$x_u\smile x_v=x_u$ for $u\leq v$, there exists a unique real valued left continuous function $\phi(\xi)$,  $\xi\in (0,\infty)$ of total variation $V$ such that \eqref{equalint} holds.
\end{prop}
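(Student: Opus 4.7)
The key observation is that \eqref{equalint} is a disguised Laplace--Stieltjes identity. Indeed, for $x_u\in[-1,1]\setminus\{0\}$, if we set $\xi(u):=-\log|x_u|\in[0,\infty)$ and $\epsilon(u):=\mathrm{sign}(x_u)\in\{\pm 1\}$, then using Proposition~\ref{propviro}(ii) together with the definition of $x(z)=\theta(\arf_z(x))$ given at the beginning of Section~\ref{sec:Frechet}, one has $[x_u](z)=\theta([\theta_z(x_u)])=\epsilon(u)\,e^{-z\,\xi(u)}$. Hence the left side of \eqref{equalint} becomes $\int_0^V\epsilon(u)e^{-z\xi(u)}\,du$. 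By \eqref{acupb}, the monotonicity condition $x_u\smile x_v=x_u$ for $u\le v$ is equivalent to $|x_u|\ge|x_v|$ with signs matching in case of equality, i.e.\ to $\xi(u)$ being a nondecreasing function of $u$. The proposition therefore reduces to matching signed Laplace--Stieltjes transforms via a sorting-by-magnitude change of variable.

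\textbf{Part (i).} Given $\phi$, let $V_\phi(\xi):=\int_{(0,\xi)}|d\phi|$ be the cumulative total-variation function; it is nondecreasing and left-continuous with $V_\phi(\infty)=V$. By Radon--Nikodym (equivalently the Jordan decomposition) there is a Borel measurable sign $\epsilon_\phi:(0,\infty)\to\{\pm 1\}$ with $d\phi=\epsilon_\phi\,dV_\phi$. Introduce the generalized inverse $\xi(u):=\inf\{\xi>0:V_\phi(\xi)>u\}$ for $u\in[0,V)$; this is Borel, nondecreasing, and has the property that the push-forward of Lebesgue measure on $[0,V)$ under $\xi$ is precisely the measure $dV_\phi$. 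Setting $x_u:=\epsilon_\phi(\xi(u))\,e^{-\xi(u)}\in[-1,1]\setminus\{0\}$ produces a function with the required $\smile$-monotonicity, and the standard change-of-variable formula $\int_{(0,\infty)} g(\xi)\,dV_\phi(\xi)=\int_0^V g(\xi(u))\,du$, applied to $g(\xi)=\epsilon_\phi(\xi)e^{-z\xi}$, yields \eqref{equalint}.

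\textbf{Part (ii) and uniqueness.} Conversely, given a measurable $x_u$ satisfying $x_u\smile x_v=x_u$ for $u\le v$, set $\xi(u):=-\log|x_u|$ (nondecreasing) and $\epsilon(u):=\mathrm{sign}(x_u)$. Define $\phi$ as the left-continuous primitive of the signed push-forward of $\epsilon(u)\,du$ by $\xi$, explicitly $\phi(\xi):=\int_{\{u<V\,:\,\xi(u)<\xi\}}\epsilon(u)\,du$. The strict inequality in the domain of integration gives left-continuity of $\phi$; the total variation equals $V$ since $|d\phi|$ is the push-forward of $du$; and \eqref{equalint} is the same change of variable as in (i) read in reverse. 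For the uniqueness in both parts one invokes injectivity of the Laplace transform on finite signed Borel measures on $[0,\infty)$: any two candidates produce the same function of $z\in\R_+^\times$, hence the same signed measure $d\phi$ (by classical Lerch/Müntz-type results), and from $d\phi$ one recovers $\xi$ as the generalized inverse of $V_\phi$ and $\epsilon_\phi$ as the Radon--Nikodym sign, so $x_u$ is determined a.e.

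\textbf{Main obstacle.} The principal care required lies in the sorting construction when $V_\phi$ has plateaus (where $\xi(u)$ jumps, producing intervals on which $x_u$ is constant) or jumps (where $\xi(u)$ is constant, so $\xi$ does not strictly increase). These are the usual subtleties of the generalized-inverse construction; they create ambiguity for $x_u$ only on Lebesgue null sets, which is precisely why uniqueness must be stated ``almost everywhere''. A related bookkeeping point is the possible contribution of $|x_u|=1$, corresponding to an atom of $d\phi$ at the origin, which is handled by the convention adopted for $\phi$ on $(0,\infty)$. Once this is settled, both directions follow from standard Lebesgue--Stieltjes calculus and the injectivity of the Laplace transform.
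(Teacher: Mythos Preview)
Your proposal is correct and follows essentially the same route as the paper's own proof: the paper also writes $x_u=h(S(u))e^{-S(u)}$ where $S(u)=\inf\{\xi:T_\phi(\xi)>u\}$ is the generalized inverse of the total-variation function and $h$ is the Radon--Nikodym sign of $d\phi$ with respect to $|d\phi|$, establishes \eqref{equalint} via the push-forward identity $S_*(du)=|d\mu|$, and deduces uniqueness from injectivity of the Laplace transform on finite measures. The only point you leave slightly implicit is why in part~(ii) the total variation of your $\phi$ is exactly $V$ (no cancellation in the push-forward of $\epsilon(u)\,du$); this follows from your own remark that equal moduli force equal signs, which is precisely how the paper handles it as well.
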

\proof $(i)$~Let $\mu$ be the unique real Borel measure on $[0,\infty)$ such that: $\mu([0,\xi))=\phi(\xi),\forall\xi > 0$.
Then the definition of the integral on the right hand side of \eqref{equalint} is
\begin{equation*}
    \int_0^\infty e^{-\xi z}d\phi(\xi)=\int_0^\infty e^{-\xi z}d\mu.
\end{equation*}
The total variation function $T_\phi(\xi)$ which is defined as (with $\phi(0)=0$)
\begin{equation*}
    T_\phi(\xi)=\sup\left(\sum_{j=1}^n|\phi(\xi_j)-\phi(\xi_{j-1})|\right)\,, \ \ 0=\xi_0<\xi_1<\ldots <\xi_n=\xi
\end{equation*}
is equal to $|\mu|([0,\xi))$ where $|\mu|$ is the positive Borel measure on $[0,\infty)$ which is the total variation of $\mu$ (\cf \cite{R}, Theorem 8.14). We set, for $u\in [0,V)$
\begin{equation}\label{defnS}
    S(u)=\inf\{\xi\in(0,\infty)\mid T_\phi(\xi)> u\}\in [0,\infty).
\end{equation}
One has $S(u)\geq S(v)$, when $u\geq v$. Moreover the function $S$ of $u$ is right continuous and is finite since the total variation of $\phi$ is  $V=V(\phi)=\displaystyle{\lim_{\xi\to \infty}} T_\phi(\xi)$. Let $m$ be the Lebesgue measure on $(0,V(\phi))$. The direct image of $m$ by $S$ is equal to the measure $|\mu|$. Indeed, one has
\begin{equation*}
    T_\phi(\xi)>u\Longleftrightarrow S(u)<\xi
\end{equation*}
which shows that the Lebesgue measure $S(m)([0,\xi))$ of the set $\{u\mid S(u)<\xi\}$ is equal to $T_\phi(\xi)=|\mu|([0,\xi))$, $\forall\xi$. Let $h(\xi)$ be the essentially unique
measurable function with values in $\{\pm 1\}$ such that $\mu=h|\mu|$ (\cf \cite{R}, Theorem 6.14). We define the function $u\mapsto x_u$ by
\begin{equation}\label{xu}
    x_u=h(S(u))e^{-S(u)}\qqq u\in [0,V).
\end{equation}
One has $|x_u|\geq |x_v|$ for $u\leq v$, moreover $|x_u|= |x_v|\implies x_u=x_v$,
since $|x_u|= |x_v|$ implies $S(u)=S(v)$. Moreover, since $S(m)=|\mu|$ one gets
\begin{equation*}
    \int_0^V f(S(u))dm=\int_0^\infty f(\xi)|d\mu|
\end{equation*}
and taking $f(\xi)=h(\xi)e^{-z\xi}$ one obtains \eqref{equalint}. The proof of the uniqueness is postponed after the proof of $(ii)$.

$(ii)$~Let $\sigma(u)={\rm sign}(x_u)$, $S(u)=-\log(|x_u|)$. These functions are well defined on the interval  $J=[0,V)$. One has $S(u)\geq S(v)$, when $u\geq v$. Let $T(\xi)$ be defined by
\begin{equation}\label{defnT}
    T(\xi)=\sup\{u\in J\mid S(u)<\xi\}\in [0,V].
\end{equation}
 $T(\xi)$ is non-decreasing and left continuous by construction, and thus it belongs to the class NBV. By hypothesis $|x_u|= |x_v|\implies x_u=x_v$, so that the function $\sigma(u)={\rm sign}(x_u)$ only depends upon $S(u)$ and can be written as $h(S(u))$ where $h$ is measurable and takes values in $\{\pm 1\}$. We extend $h$ to a measurable function $h:[0,\infty)\to \{\pm 1\}$. Let $\mu= h dT$ be the real Borel measure such that
\begin{equation}\label{mumu}
    |\mu|([0,\xi))=T(\xi)\,, \ \ \mu=h|\mu|
\end{equation}
Then the function $\phi(\xi)=\mu([0,\xi))$ is  real valued, NBV and such that \eqref{equalint} holds. Indeed, one has
\begin{equation*}
    T(\xi)>u\Longleftrightarrow S_+(u)<\xi, \ \ S_+(u)=\lim_{\epsilon\to 0\atop \epsilon>0}S(u+\epsilon)
\end{equation*}
so that the map $S$ associated to the function $\phi$ in the proof of $(i)$ is equal to $S_+$ and thus it agrees with $S$ outside a countable set. Hence the function $x_u$ associated to $\phi$ by \eqref{xu} agrees with the original $x_u$ almost everywhere and one gets  \eqref{equalint}.

Finally, we prove the uniqueness statement of $(i)$. It is enough to show that the function $f(z)$ given by
\begin{equation}\label{f(z)}
    f(z)=\int_0^\infty [x_u](z)du
\end{equation}
uniquely determines $x_u$ almost everywhere. It follows from the above discussion that it is enough to prove that $f(z)$ uniquely determines the function $\phi(\xi)$. By \cite{R}, Theorem 8.14,
the function $\phi\in$ NBV is uniquely determined by the associated measure $\mu$. The latter is uniquely determined by $f$ since one has
\begin{equation*}
    f(z)=\int_0^\infty e^{-\xi z} d\mu(\xi).
\end{equation*}
Hence $f$ is the Laplace transform of $\mu$ and this property determines uniquely the finite measure $\mu$.\endproof

The following definition introduces the real archimedean counterpart of the ring $ \wittOovpi$ of $p$-adic Hodge theory.
\begin{defn}\label{defnbtplus} We denote by $\btplusarc$  the space of real functions of the form $\int_0^V [x_u]du$ where $V<\infty$, and $u\mapsto x_u\in [-1,1]$ is a measurable function of $u \in [0,V]$, such that
\[
x_u\smile x_v=x_u,\quad\text{for}\quad u\leq v.
\]
\end{defn}

Then Proposition~\ref{propnbv} shows that the functions in $\btplusarc$ are exactly
the Laplace transforms of finite real Borel measures
\begin{equation}\label{fzfz}
    f(z)=\int_0^\infty e^{-\xi z} d\mu(\xi).
\end{equation}
Moreover, when expressed in terms of $\mu$ one gets
\begin{equation}\label{equsup0}
   \|f\|_0:=\sup\{u\in[0,V]\mid x_u\neq 0\}=|\mu|([0,\infty))
\end{equation}
This can be seen using \eqref{defnT} to get
\begin{equation*}
    \sup\{u\in[0,V]\mid x_u\neq 0\}= \sup\{u\in[0,V]\mid S(u)<\infty\}=\lim_{\xi\to \infty}T(\xi)
    =|\mu|([0,\infty)).
\end{equation*}
This shows that $\btplusarc$ is the real Banach algebra of convolution of finite real Borel measures on $[0,\infty)$.
Then we obtain the following result

\begin{thm}\label{mainwitt} The space $\btplusarc$, endowed with the pointwise operations of functions and the map $f\mapsto \|f\|_0=\sup\{u\in\R_{\ge 0}\mid x_u\neq 0\}$,  is a real Banach algebra.
\end{thm}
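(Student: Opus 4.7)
The plan is to identify $\btplusarc$, via the Laplace transform, with the classical convolution algebra of finite real Borel measures on $[0,\infty)$. Proposition~\ref{propnbv} together with \eqref{fzfz} and \eqref{equsup0} already does most of this work: it sets up a bijection
\[
\btplusarc \;\longleftrightarrow\; M_\R([0,\infty)), \qquad f \longleftrightarrow \mu,
\]
where $M_\R([0,\infty))$ denotes the space of finite real Borel measures on $[0,\infty)$, under which $f(z)=\int_0^\infty e^{-\xi z}\,d\mu(\xi)$ and $\|f\|_0=|\mu|([0,\infty))$ is the total variation norm. So the task reduces to transporting the pointwise algebra structure through this bijection and then invoking the standard facts about the measure algebra.

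First I would verify that the bijection sends pointwise addition and pointwise multiplication of functions to, respectively, addition and convolution of measures. Linearity is immediate from linearity of the Laplace transform, and for multiplication one uses the Fubini computation
\[
f(z)g(z)=\int_0^\infty\!\!\int_0^\infty e^{-(\xi+\eta)z}\,d\mu(\xi)\,d\nu(\eta)=\int_0^\infty e^{-\xi z}\,d(\mu*\nu)(\xi),
\]
which is legal since $\mu,\nu$ are finite. This already shows that $\btplusarc$ is stable under pointwise sum and product (so it is a subalgebra of the algebra of functions on $\R_+^\times$), since $\mu+\nu$ and $\mu*\nu$ are finite real Borel measures on $[0,\infty)$.

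Next I would check that $\|\cdot\|_0$ is a submultiplicative Banach algebra norm. Nondegeneracy follows from uniqueness of the Laplace transform established at the end of the proof of Proposition~\ref{propnbv}(i); the triangle inequality is the triangle inequality for total variation $|\mu+\nu|\le|\mu|+|\nu|$; homogeneity is clear. Submultiplicativity reduces to the classical estimate $|\mu*\nu|\le |\mu|*|\nu|$, whose total mass is $|\mu|([0,\infty))\cdot|\nu|([0,\infty))$, giving $\|fg\|_0\le\|f\|_0\|g\|_0$.

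Finally I would establish completeness by transferring it from $M_\R([0,\infty))$ with the total variation norm. A Cauchy sequence $(f_n)$ in $\btplusarc$ corresponds to a total-variation Cauchy sequence $(\mu_n)$ of signed measures on $[0,\infty)$; decomposing $\mu_n=\mu_n^+-\mu_n^-$ via Hahn--Jordan and noting that total variation dominates both parts, one obtains a norm-limit $\mu\in M_\R([0,\infty))$ by the classical completeness of the space of finite signed Borel measures under total variation. The associated Laplace transform $f\in\btplusarc$ then satisfies $\|f_n-f\|_0\to 0$. The only mild subtlety, which I regard as the main point to check carefully, is the identification $\|f\|_0=|\mu|([0,\infty))$: this was already derived in the discussion following Proposition~\ref{propnbv} from \eqref{defnT} and the definition of $S$, so the rest of the argument is essentially a bookkeeping translation from the language of integrals $\int[x_u]\,du$ to the language of the measure algebra.
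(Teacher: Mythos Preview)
Your proposal is correct and follows exactly the approach taken in the paper: the discussion preceding the theorem identifies $\btplusarc$ with the convolution algebra of finite real Borel measures on $[0,\infty)$ via the Laplace transform, with $\|f\|_0$ equal to the total variation $|\mu|([0,\infty))$, and the theorem is then stated as an immediate consequence of this identification. You have simply made explicit the standard verifications (that pointwise product corresponds to convolution, submultiplicativity, and completeness of the measure algebra) that the paper leaves to the reader.
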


\subsection{Canonical form of elements of $\btplusarc$}\label{compsect}

In the $p$-adic case (\cf Appendix~\ref{algebras}),  every element $x\in\wittOovpi$ can be written uniquely in the form
\begin{equation}\label{Bb0}
x=\sum_{n\gg-\infty}[x_n]\pi^n\,, \ \ x_n\in\cO_F.
\end{equation}
In the archimedean case one gets an analogous decomposition by applying Proposition \ref{propnbv}. In the next pages we shall explain this point with care since the decomposition of the elements in $\btplusarc$ does {\em not} arise by applying \eqref{equalint} naively.

\begin{thm}\label{thmdec} Let $f\in\btplusarc$. Then there exists $s_0>-\infty$ and a measurable function, unique   except on a set of Lebesgue measure zero,
$s\mapsto f_s\in [-1,1]\setminus \{0\}$, for $s>s_0$ such that $f_s\smile f_t=f_s$ for $s\leq t$ and
\begin{equation}\label{canonicalform}
    f=\int_{s_0}^\infty [f_s]e^{-s}ds.
\end{equation}
\end{thm}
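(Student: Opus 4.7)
The idea is to derive the canonical expansion \eqref{canonicalform} directly from the representation of Proposition \ref{propnbv}(i) by a single exponential change of variable $u\leftrightarrow s$, and to read off uniqueness by running the same substitution in reverse.

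First I would use the remark after Definition \ref{defnbtplus} identifying $\btplusarc$ with the Laplace transforms of finite real Borel measures on $[0,\infty)$: for $f\in\btplusarc$, let $\mu$ be the associated measure and apply Proposition \ref{propnbv}(i) (with $\phi(\xi)=\mu([0,\xi))$) to produce $V=|\mu|([0,\infty))=\|f\|_0$ together with an essentially unique measurable map $u\mapsto x_u\in[-1,1]\setminus\{0\}$ on $[0,V)$ satisfying $x_u\smile x_v=x_u$ for $u\le v$ and $f=\int_0^V[x_u]\,du$. Set $s_0:=-\log V>-\infty$ and perform the substitution $u=V-e^{-s}$, whose Jacobian is $du=e^{-s}\,ds$ and which identifies $[0,V)$ in $u$ with $(s_0,\infty)$ in $s$. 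Defining
$$f_s:=x_{V-e^{-s}},\qquad s>s_0,$$
converts the integral into $f=\int_{s_0}^\infty[f_s]\,e^{-s}\,ds$. Because $s\mapsto V-e^{-s}$ is strictly increasing, the condition $f_s\smile f_t=f_s$ for $s\le t$ is inherited immediately from the corresponding property of $x_u$, and $f_s\in[-1,1]\setminus\{0\}$ is automatic.

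For uniqueness, given a second representation $f=\int_{s_0'}^\infty[g_s]\,e^{-s}\,ds$ of the stated form, set $V':=e^{-s_0'}$ and $y_u:=g_{-\log(V'-u)}$ for $u\in[0,V')$; reversing the change of variable yields $f=\int_0^{V'}[y_u]\,du$, a representation of the type considered in Proposition \ref{propnbv}(ii), with $y_u\in[-1,1]\setminus\{0\}$ and $y_u\smile y_v=y_u$ for $u\le v$ (the ordering is preserved by the monotonicity of $u\mapsto -\log(V'-u)$). The identity \eqref{equsup0} then forces $V'=\|f\|_0=V$, hence $s_0'=s_0$, and the essential uniqueness clause of Proposition \ref{propnbv}(i) identifies $y_u$ with $x_u$ outside a Lebesgue null set. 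Pulling this back through the smooth change of variable yields $g_s=f_s$ almost everywhere on $(s_0,\infty)$.

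The only delicate step is ensuring that $s_0$ is uniquely determined by $f$: a priori one might hope to enlarge the integration interval by taking $s_0'<s_0$ and setting $g_s=0$ on the extra piece, but this is excluded by the standing requirement $g_s\in[-1,1]\setminus\{0\}$, and \eqref{equsup0} pins down the endpoint via $e^{-s_0}=\|f\|_0$. Beyond that, the proof reduces to the change of variable $u=V-e^{-s}$ combined with an appeal to Proposition \ref{propnbv}; nothing deeper is required.
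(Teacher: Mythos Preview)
Your proof is correct and follows exactly the same approach as the paper: apply Proposition~\ref{propnbv} to write $f=\int_0^V[x_u]\,du$, set $s_0=-\log V$, and use the substitution $u=V-e^{-s}$ with $f_s=x_{V-e^{-s}}$. The paper's proof is in fact terser than yours---it stops after the change of variable and does not spell out the uniqueness argument, which you handle by inverting the substitution and invoking \eqref{equsup0} together with the essential uniqueness clause of Proposition~\ref{propnbv}(i).
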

\proof By Proposition~\ref{propnbv} there exists a measurable function $u\mapsto x_u\in [-1,1]\setminus \{0\}$ of $u \in [0,V)$ such that
$x_u\smile x_v=x_u$ for $u\leq v$  so that
\begin{equation}\label{start}
   f= \int_0^V [x_u]du.
\end{equation}
Define $f_s$ by the equality
\begin{equation}\label{defnfs}
    f_s=x_{(V-e^{-s})}\qqq s \geq s_0=-\log V.
\end{equation}
The function $V-e^{-s}$ is increasing and $d(V-e^{-s})=e^{-s} ds$, so that
\eqref{canonicalform} follows from \eqref{start} by applying a change of variables. \endproof

The scalars $\R$, \ie the constant functions in $\btplusarc$, are characterized by the condition
\begin{equation}\label{scalarchar}
    \int_{s_0}^\infty [f_s]e^{-s}ds\in \R \Longleftrightarrow f_s\in \sign\qqq s
\end{equation}
where $\sign = \{-1,0,1\}$ is the hyperfield of signs (\cf \cite{CC2}).
 This corresponds, in the $p$-adic case, to the characterization of the elements of the local field $K\subset\wittOovpi$ by the condition
\begin{equation*}
    \sum_{n\gg-\infty} [a_n]\pi^n\in K\Longleftrightarrow a_n\in k\qqq n
\end{equation*}
where $k$ is the residue field of $K$ (\cf \cite{FF1}, \S 2.1
and also Appendix~\ref{algebras}).

 In the $p$-adic case, the projection $\cO_F\to k_F$ (\cf Appendix~\ref{algebras}) induces an augmentation map $\varepsilon$ obtained by applying the above projection to each $a_n$ inside the expansion $f=\sum_{n\gg-\infty} [a_n]\pi^n$ (\cf \eqref{aug} in Appendix \ref{algebras}). In the real archimedean case, the corresponding projection is the map
 \[
 \trop\supset [-1,1]=\cO\to \sign,\quad x\mapsto \tilde x=\left\{ \begin{array}{rcl}0&\mbox{if}&x\in(-1,1)\\
 \pm 1&\mbox{if}&x=\pm 1\end{array}\right.
  \]
 When this projection is applied inside the expansion $f=\int_{s_0}^\infty [f_s]e^{-s}ds$ of elements in $\btplusarc$, it yields the following
 \begin{prop} \label{augmentation}
 For $f\in\btplusarc$, let $f=\int_{s_0}^\infty [f_s]e^{-s}ds$ be its canonical form. Then
\begin{equation}\label{charepsilon}
    \epsilon(f):=\int_{s_0}^\infty [\tilde{f_s}]e^{-s}ds=\lim_{z\to +\infty}f(z)
\end{equation}
defines a character $\epsilon: \btplusarc\to \R$ of the Banach algebra $\btplusarc$.
\end{prop}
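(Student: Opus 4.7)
My plan is to identify $\epsilon(f)$ with $\lim_{z\to+\infty} f(z)$ via dominated convergence applied to the canonical expansion \eqref{canonicalform}, and then to deduce the character property from the fact, noted in Theorem~\ref{mainwitt}, that the algebra structure on $\btplusarc$ is given by the pointwise operations in $z$.

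First I would observe that since $\tilde{f_s}$ takes values in $\sign=\{-1,0,1\}\subset\R$, the Teichm\"uller lift $[\tilde{f_s}]$ is the constant function of $z$ equal to $\tilde{f_s}$; indeed $[0]=0$ and $[\pm 1](z)=\pm 1$. Consequently
\begin{equation*}
\int_{s_0}^\infty [\tilde{f_s}]\,e^{-s}\,ds=\int_{s_0}^\infty \tilde{f_s}\,e^{-s}\,ds
\end{equation*}
is a real number (the integral converges since $|\tilde{f_s}|\le 1$ and $s_0>-\infty$), which belongs to $\btplusarc$ as a scalar by \eqref{scalarchar}.

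Next I apply the dominated convergence theorem. Using $[f_s](z)=\mathrm{sign}(f_s)|f_s|^z$ together with the bound $0<|f_s|\le 1$ built into Definition~\ref{defnbtplus}, the integrand in \eqref{canonicalform} is dominated by $e^{-s}$ uniformly in $z>0$, and $e^{-s}$ is integrable on $[s_0,\infty)$. Pointwise in $s$ one has $|f_s|^z\to \mathbf{1}_{\{|f_s|=1\}}$ as $z\to+\infty$, whence $\mathrm{sign}(f_s)|f_s|^z\to\tilde{f_s}$. Dominated convergence then yields
\begin{equation*}
\lim_{z\to+\infty} f(z)=\int_{s_0}^\infty \tilde{f_s}\,e^{-s}\,ds=\epsilon(f),
\end{equation*}
which is exactly the identity \eqref{charepsilon}, and in particular shows $\epsilon(f)\in\R$.

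For the character property I invoke Theorem~\ref{mainwitt}, by which the operations on $\btplusarc$ are the pointwise ones in $z$, and the previous step, which guarantees the existence of $\lim_{z\to+\infty}h(z)\in\R$ for every $h\in\btplusarc$. Passing to the limit at $+\infty$ in the pointwise identities $(f+g)(z)=f(z)+g(z)$ and $(fg)(z)=f(z)g(z)$ then gives $\epsilon(f+g)=\epsilon(f)+\epsilon(g)$ and $\epsilon(fg)=\epsilon(f)\epsilon(g)$; since $\epsilon(1)=1$, the map $\epsilon$ is a unital $\R$-algebra homomorphism $\btplusarc\to\R$, hence a character. The only mild technical point is the uniform domination in the DCT step, which is immediate from $|f_s|\le 1$; no serious obstacle arises.
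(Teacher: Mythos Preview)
Your proof is correct and follows essentially the same approach as the paper: dominated convergence (with the bound $e^{-s}$) to identify $\epsilon(f)$ with $\lim_{z\to+\infty}f(z)$, then the pointwise nature of the operations on $\btplusarc$ to conclude that $\epsilon$ is a character. You have simply spelled out the domination and the pointwise limit $[f_s](z)\to\tilde{f_s}$ in more detail than the paper does.
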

\proof
When $z\to \infty$, one has for any  $s\geq s_0$, $[f_s](z)\to [\tilde{f_s}](z)$. Thus it follows from the Lebesgue dominated convergence theorem that (\cf \eqref{f(z)})
\begin{equation}\label{limlim}
   \lim_{z\to +\infty}f(z)=\int_{s_0}^\infty [\tilde{f_s}]e^{-s}ds.
\end{equation}
Since the operations in the Banach algebra $\btplusarc$  are pointwise when the elements are viewed as functions of $z$, the functional $\epsilon$ is a character.
\endproof

Next, we exploit a Theorem of Titchmarsh to show that the leading term of the canonical form
\eqref{canonicalform} behaves multiplicatively likewise its $p$-adic counterpart. In particular, it will also follow that the ring $\btplusarc$ is integral (\ie it has no zero divisors).
\begin{thm}\label{titsmarsh}
The following formula defines a multiplicative map from the subset of non zero elements of $\btplusarc$ to $(0,1]$
\begin{equation}\label{mult}
    |\rho|(f)=\lim_{\epsilon\to 0+}|f_{s_0+\epsilon}| \qqq f=\int_{s_0}^\infty [f_s]e^{-s}ds\in\btplusarc\setminus\{0\}.
\end{equation}
\end{thm}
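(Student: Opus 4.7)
The plan is to reinterpret $|\rho|(f)$ as an intrinsic invariant of the finite real Borel measure $\mu_f$ on $[0,\infty)$ associated with $f$ (by Proposition~\ref{propnbv} and the identification $f(z)=\int_0^\infty e^{-\xi z}d\mu_f(\xi)$), and then reduce multiplicativity to the Titchmarsh convolution theorem for measures.

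First I would trace through the canonical form of Theorem~\ref{thmdec} to pin down $|\rho|(f)$ intrinsically. Recall that $f_s=x_{V-e^{-s}}$ for $s>s_0=-\log V$, where $V=|\mu_f|([0,\infty))$ and $|x_u|=e^{-S(u)}$ with $S(u)=\inf\{\xi>0\mid T_{\phi_f}(\xi)>u\}$. As $\epsilon\to 0^+$ we have $V-e^{-(s_0+\epsilon)}\to 0^+$, so $S(V-e^{-(s_0+\epsilon)})\to S(0^+)$; but $S(0^+)$ is exactly the infimum of the support of $|\mu_f|$, i.e.
\[
m_f := \inf\mathrm{supp}(\mu_f)=\inf\mathrm{supp}(|\mu_f|).
\]
This gives the clean formula $|\rho|(f)=e^{-m_f}$. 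From $f\neq 0\Leftrightarrow\mu_f\neq 0\Leftrightarrow m_f<\infty$ we get $|\rho|(f)>0$, and from $\mathrm{supp}(\mu_f)\subset[0,\infty)$ we get $m_f\geq 0$, hence $|\rho|(f)\in(0,1]$.

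Next I would verify that multiplication in $\btplusarc$ corresponds to convolution of measures: since $\btplusarc$ is endowed with the pointwise operations on functions of $z$ and the elements of $\btplusarc$ are Laplace transforms of finite real Borel measures on $[0,\infty)$, the product $fg$ corresponds to $\mu_f*\mu_g$. The Titchmarsh convolution theorem, in its form for finite signed Borel measures on $[0,\infty)$, asserts
\[
\inf\mathrm{supp}(\mu*\nu)=\inf\mathrm{supp}(\mu)+\inf\mathrm{supp}(\nu)\qquad\text{for }\mu,\nu\neq 0.
\]
Applying this with $\mu=\mu_f$, $\nu=\mu_g$ yields $m_{fg}=m_f+m_g$, whence
\[
|\rho|(fg)=e^{-(m_f+m_g)}=|\rho|(f)\,|\rho|(g).
\]

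The main obstacle is the appeal to Titchmarsh in this precise generality (signed measures, not just $L^1$ functions); I would cite the classical measure-theoretic form but, if needed, sketch it by reducing via translation to the case $m_f=m_g=0$ and then analyzing the asymptotics of $\widehat{\mu_f*\mu_g}(z)=f(z)g(z)$ as $z\to+\infty$ to rule out cancellation of leading behavior. As an immediate by-product, $\mu_f*\mu_g\neq 0$ whenever both factors are non-zero, so the ring $\btplusarc$ has no zero divisors; this integrality statement, mentioned in the preamble to the theorem, drops out of the same argument.
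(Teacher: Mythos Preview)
Your proposal is correct and follows essentially the same route as the paper: identify $|\rho|(f)=e^{-m_f}$ with $m_f=\inf\mathrm{supp}(\mu_f)$ via the change of variables from $s$ back to $u$ and the formula $|x_u|=e^{-S(u)}$, then invoke Titchmarsh's convolution theorem to get additivity of $m_f$ under convolution. The paper handles the generality issue you flag by citing Lions' distributional formulation of Titchmarsh, so you may simply cite \cite{T,L} rather than sketch a reduction.
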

\proof Using \eqref{defnfs}, we can write the map $|\rho|$, with the notations of Proposition~\ref{propnbv} as
\begin{equation*}
    |\rho|(x)=\lim_{\epsilon\to 0+}|x_\epsilon| \qqq x=\int_0^V [x_u]du.
\end{equation*}
By \eqref{defnS} one has
\begin{equation*}
  \lim_{\epsilon\to 0+}|x_\epsilon|=e^{-S(0)}, \ \  S(0)=\inf\{\xi\mid T_\phi(\xi)> 0\}\in [0,\infty).
\end{equation*}
In terms of the measure $\mu=d\phi$, $S(0)$ is the lower bound of the support of $\mu$. By Titchmarsh's Theorem \cite{T} (formulated in terms of distributions \cite{L}) one has
the additivity of these lower bounds  for the convolution of two measures on  $[0,\infty)$
\begin{equation*}
    \inf{\rm Support}(\mu_1\star\mu_2)=\inf{\rm Support}(\mu_1)
    +\inf{\rm Support}(\mu_2)
\end{equation*}
and hence the required multiplicativity. \endproof

\begin{rem}\label{care}{\rm  One important nuance between the $p$-adic case and the archimedean case is in the behavior of $s_0$ under the algebraic operations. As in the $p$-adic case the quantity $V=e^{-s_0}$ defines a norm, $\|.\|_0$, but this norm  is no longer ultrametric and is  {\em sub-multiplicative} (\cf Lemma \ref{lemsubmul}) while its
$p$-adic counterpart (\cf Lemma \ref{newnorm} (ii)) is multiplicative. It remains multiplicative for positive measures.
}\end{rem}

\subsection{The real archimedean norms $\|.\|_\rho$}\label{rn}

We recall that in $p$-adic Hodge theory one defines, for each $\rho\in (0,1)$, a multiplicative norm on the ring
\begin{equation}\label{Bbprime}
\wittOovpi=W_{\cO_K}(\cO_F)[\frac{1}{\pi}]=\{f=\sum_{n\gg-\infty}[a_n]\pi^n\in \mathfrak E_{F,K}|a_n\in\cO_F,\forall n\}
\end{equation}
(\cf Appendix~\ref{algebras} for notations) by letting
\begin{equation}\label{norm}
    |f|_\rho=\max_\Z |a_n|\rho^n.
\end{equation}
 To define the real archimedean counterpart of the norm $|.|_\rho$, one needs first to rewrite \eqref{norm} in a slightly different manner without changing the uniform structure that it describes. Let $q$ be the cardinality of the field of constants $k$  so that $|\pi|=q^{-1}$. Rather than varying $\rho\in (0,1)$ we introduce a real positive parameter $\alpha>0$ and make it varying so that $\rho^\alpha=q^{-1}$.
  \begin{lem} \label{newnorm}
  $(i)$~For $\rho\in (0,1)$,  we set
   $
 \alpha=\frac{\log q}{-\log\rho},\quad (\rho=q^{-1/\alpha}).
 $

  Then, for $f=\displaystyle{\sum_{n\gg-\infty}}[a_n]\pi^n\in\wittOovpi$ the equality
  \begin{equation}\label{norm1}
  |f|_\rho^\alpha=\max_\Z |a_n|^\alpha q^{-n}
\end{equation}
  defines a multiplicative norm on $\wittOovpi$ that describes the same uniform structure as the norm  $|.|_\rho$ and whose restriction to $\Q_p$ is independent of $\rho$. \vspace{.05in}

  $(ii)$~The limit, as $\rho\to 0$,  of $|f|^\alpha_\rho$ is the norm $|f|_0=q^{-r}$, where $r$ is the smallest integer such that $a_r\neq 0$ (\cf\cite{FF1}, \S3.1).
  \end{lem}
 \proof $(i)$~One has $\rho^\alpha=q^{-1}$, thus the second equality of \eqref{norm1} holds. Since $|\cdot|_\rho$ is a multiplicative norm the same statement holds for $|\cdot|_\rho^\alpha$. When restricted to $\Q_p$ the expression \eqref{norm1} is independent of $\alpha$ since one has
 $|a_n|\in \{0,1\}$ $\forall n$.

$(ii)$~As $\rho\to 0$, also $\alpha\to 0$ and  $|f|^{\alpha}_\rho\stackrel[\alpha\to 0]{}{\longrightarrow}\displaystyle{\max_{n\in\Z\atop a_n\neq 0}}~q^{-n}=|f|_0$.
\endproof

In the real archimedean case, the operation of taking the ``$\max$'' in \eqref{norm1} is replaced by an integration process.  By   re-scaling, we can replace $\log q$ by $1$; then the archimedean analogue of \eqref{norm1} is given, for each $\rho\in (0,1)$ and for $\alpha=-\frac{1}{\log\rho}$ by the formula
\begin{equation}\label{ananorm}
    \|f\|_\rho:=\int_{s_0}^\infty |f_s|^\alpha e^{-s}ds\qqq f=\int_{s_0}^\infty [f_s]e^{-s}ds\in\btplusarc.
\end{equation}

\begin{lem}\label{lemsubmul} Let $\rho\in [0,1)$. Equation \eqref{ananorm} defines a sub-multiplicative norm on $\btplusarc$. For $\rho>0$ one has, with $\alpha=-\frac{1}{\log\rho}$
\begin{equation}\label{ananormfz}
    \|f\|_\rho=\int_{0}^\infty e^{-\xi \alpha} |d\mu(\xi)|\qqq f(z)=
    \int_{0}^\infty e^{-\xi z} d\mu(\xi).
\end{equation}
For $\rho=0$ this norm coincides with the norm $\|f\|_0$ of Theorem~\ref{mainwitt}.
\end{lem}
\proof Using \eqref{defnfs}, we can write the functional $\|f\|_\rho$ with the notations of Proposition~\ref{propnbv}
\begin{equation}\label{rhonormxx}
   \|x\|_\rho=\int_0^V|x_u|^\alpha du\qqq x=\int_0^V [x_u]du.
\end{equation}
Using the notations of the proof of Proposition \ref{propnbv}, one has
\begin{equation*}
    \int_0^V|x_u|^\alpha du=\int_0^V e^{-\alpha S(u)}  du=
    \int_{0}^\infty e^{-\xi \alpha} |d\mu(\xi)|
\end{equation*}
since the image of the Lebesgue measure $m$ on $[0,V)$ by the map $S$ is the measure $|d\mu(\xi)|$. Thus we obtain \eqref{ananormfz}.
The sum of the functions associated with the measures $d\mu_j$ ($j=1,2$) corresponds to the measure $d\mu_1+d\mu_2$. Thus one derives the triangle inequality
$\|f_1+f_2\|_\rho\leq \|f_1\|_\rho+ \|f_2\|_\rho.
 $
 The product of the functions corresponds to the convolution $d\mu=d\mu_1\star d\mu_2$ of the measures
\begin{equation}\label{convol1}
\int_0^\infty h(\xi)d\mu(\xi)=\int_0^\infty\int_0^\infty h(\xi_1+\xi_2)d\mu_1(\xi_1)d\mu_2(\xi_2)
\end{equation}
which is the projection of the product measure $d\mu_1\otimes d\mu_2$ by the map $(\xi_1,\xi_2)\mapsto s((\xi_1,\xi_2))=\xi_1+\xi_2$. The module of the product measure $d\mu_1\otimes d\mu_2$
is $|d\mu_1|\otimes |d\mu_2|$.  Moreover, for $h\geq 0$ a real positive function one has
\[
\int_0^\infty h |d\nu|=\sup\{\left\arrowvert\int_0^\infty h\psi d\nu\right\arrowvert : |\psi|\leq 1\}.
 \]
 It follows that the module of the projection of a measure is less than or equal to the projection of its module. Thus we derive
\[
\|f_1 f_2\|_\rho=\int_0^\infty  e^{\xi/\log\rho}|d\mu(\xi)|\leq \int_0^\infty e^{(\xi_1+\xi_2)/\log\rho}|d\mu_1(\xi_1)\|d\mu_2(\xi_2)|=\|f_1\|_\rho\|f_2\|_\rho
\]
which proves that $\|.\|_\rho$ is sub-multiplicative.
The limit case $\rho=0$ arises by taking the limit $\alpha\to 0$ in \eqref{ananorm}, hence we obtain
\begin{equation}\label{ananorm0}
    \|f\|_0=\int_{0}^V du,\quad  V=\sup\{u\mid x_u\neq 0\}.
\end{equation}
Thus $\|f\|_0 $ agrees with the norm  of Theorem~\ref{mainwitt}.\endproof

 The norms $\|.\|_\rho$ behave coherently with  the action of the automorphisms $\arf_\lambda$, more precisely one has the equality
\begin{equation}\label{compnorm}
    \|\arf_\lambda(f)\|_\rho=\|f\|_{\rho^{1/\lambda}}.
\end{equation}
Indeed, for $f(z)=\int_0^\infty e^{-\xi z}d\mu(\xi)$ one has
 \begin{equation}\label{frobnorm}
 \arf_\lambda(f)(z)=\int_0^\infty e^{-\lambda\xi z}d\mu(\xi)=\int_0^\infty e^{-\xi z}d\mu(\xi/\lambda)
 \end{equation}
which gives \eqref{compnorm}.

By construction the archimedean norms $\|.\|_\rho$ fulfill the inequality
\begin{equation}\label{inequnorm}
   \|f\|_\rho\leq \|f\|_{\rho'}\qqq \rho\geq \rho'.
\end{equation}
In particular for $I\subset (0,1)$  a closed interval, one has, with $\rho_0=\min I$ the smallest element of $I$
\begin{equation}\label{normI}
    \|f\|_I=\sup_{\rho\in I} \|f\|_\rho=\|f\|_{\rho_0}.
\end{equation}
\begin{defn}\label{Binf+} We define $B^+_\infty$ to be the Frechet  algebra  projective limit of the Banach algebras  completion of $\btplusarc$  for the norms $\|\cdot\|_\rho$, for $\rho \in (0,1)$.
 \end{defn}
It is straightforward to check, using \eqref{ananormfz}, that $B^+_\infty$ is the convolution algebra of measures $\mu$ on $[0,\infty)$ such that
\begin{equation}\label{convolrho}
   \int_0^\infty  e^{-\alpha\xi}|d\mu(\xi)|<\infty \qqq \alpha >0.
   \end{equation}
   \begin{prop}\label{ideal}
  $(i)$~The measures which are absolutely continuous with respect to the Lebesgue measure form an ideal $\jj\subset B^+_\infty$.

  $(ii)$~Let $B^+_{\infty,0}\subset B^+_\infty$ be the sub-ring obtained by adjoining the unit: \[
   B^+_{\infty,0}=\jj +\R \subset B^+_\infty.
    \]
    Then for $f\in B^+_\infty$, one has $f\in B^+_{\infty,0}$ if and only if the map $\lambda\mapsto \arf_\lambda(f)\in B^+_\infty$ is continuous for the Frechet topology of $B^+_\infty$.
    \end{prop}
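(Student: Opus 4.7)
For the ideal property, I would use the identification of $B^+_\infty$ with the convolution algebra of finite real Borel measures on $[0,\infty)$ having all exponential moments (Equation~\ref{convolrho}) and argue by direct computation. Taking $d\mu_1=h(\xi)\,d\xi\in\jj$ and $\mu_2\in B^+_\infty$, a Fubini calculation shows that $\mu_1\star\mu_2$ has density $\tilde h(\eta)=\int_0^\eta h(\eta-\xi)\,d\mu_2(\xi)$ with respect to Lebesgue measure. The estimate $\int_0^\infty e^{-\alpha\eta}|\tilde h(\eta)|\,d\eta\le (\int e^{-\alpha\zeta}|h(\zeta)|\,d\zeta)\cdot\|\mu_2\|_\rho$, with $\alpha=-1/\log\rho$, then proves finiteness of the exponential moments of the density, so $\mu_1\star\mu_2\in\jj$. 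Linearity of $\jj$ is obvious.

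\textbf{Part (ii), direction $\Rightarrow$.} To prove that $\mu\in B^+_{\infty,0}$ implies continuity, I would write $\mu=c\delta_0+\nu$ with $\nu\in\jj$ and use $\arf_\lambda\delta_0=\delta_0$ to reduce to the continuity of $\lambda\mapsto\arf_\lambda\nu$. Setting $d\nu=h(\xi)\,d\xi$, the pushforward formula reads $d(\arf_\lambda\nu)=\lambda^{-1}h(\xi/\lambda)\,d\xi$. For $h\in C_c^\infty((0,\infty))$, continuity in each $\|\cdot\|_\rho$ follows from uniform convergence of $\lambda^{-1}h(\xi/\lambda)$ to $h(\xi)$ on a bounded set. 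For general $h$, I would approximate by $h_n\in C_c^\infty$ in every Frechet seminorm and use the identity
\[
\|\arf_\lambda(\nu-\nu_n)\|_\rho=\int_0^\infty e^{-\alpha\lambda\zeta}|h-h_n|(\zeta)\,d\zeta,
\]
which is bounded, uniformly for $\lambda$ in a neighbourhood of any fixed $\lambda_0\in\R_+^\times$, by a Frechet seminorm of $\nu-\nu_n$. A standard triangle-inequality argument then delivers continuity.

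\textbf{Part (ii), direction $\Leftarrow$ (reduction).} For the converse, my plan is to use the Lebesgue decomposition $\mu=\mu_{ac}+\mu_{sing}$. Since $\arf_\lambda$ preserves both classes and the weighted total variation norm is additive on mutually singular measures, continuity of $\lambda\mapsto\arf_\lambda\mu$ forces continuity of $\lambda\mapsto\arf_\lambda\mu_{sing}$. I then separate the atom at the origin, $\mu_{sing}=c\delta_0+\mu'$ with $\mu'(\{0\})=0$, reducing the problem to showing that any non-zero singular $\mu'$ with $\mu'(\{0\})=0$ has a discontinuous orbit.

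\textbf{The main obstacle} is this last reduction: the atomic case (atom at $\xi_0>0$) can be settled by inspection, but the singular continuous case requires a dilation analogue of the ``almost every translation makes a singular measure mutually singular with itself'' lemma. The key device I would exploit is a Fubini identity: if $S\subset(0,\infty)$ is a Lebesgue-null Borel carrier of $\mu'$, then
\[
\int_0^\infty\mu'(\lambda S)\,d\lambda=\int d\mu'(x)\int_0^\infty 1_S(x/\lambda)\,d\lambda=\int d\mu'(x)\cdot\bigl|\{x/s:s\in S\}\bigr|=0,
\]
because the image of the Lebesgue-null set $S$ under the diffeomorphism $s\mapsto x/s$ of $(0,\infty)$ remains Lebesgue-null. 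Consequently $\mu'(\lambda S)=0$ for almost every $\lambda$, and picking any sequence $\lambda_n\to 1$ in this full-measure set, the measures $\arf_{\lambda_n}\mu'$ and $\mu'$ are mutually singular. Hence $\|\arf_{\lambda_n}\mu'-\mu'\|_\rho=\|\arf_{\lambda_n}\mu'\|_\rho+\|\mu'\|_\rho$, which tends to $2\|\mu'\|_\rho>0$ by dominated convergence, contradicting the assumed continuity. Therefore $\mu'=0$ and $\mu\in B^+_{\infty,0}$.
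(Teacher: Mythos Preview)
Your argument is correct. Parts (i) and the forward implication of (ii) match the paper's approach, only with more detail supplied; the paper dismisses (i) as ``well known properties of convolution of measures'' and for (ii)$\Rightarrow$ simply writes down the identity $\|\arf_\lambda(f)-\arf_{\lambda'}(f)\|_\rho=\int_0^\infty e^{-\alpha\xi}|h_\lambda(\xi)-h_{\lambda'}(\xi)|\,d\xi$ and asserts continuity.

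For the converse (ii)$\Leftarrow$, however, you take a genuinely different route. The paper's one-line proof is a mollification argument: if $\lambda\mapsto\arf_\lambda(f)$ is continuous, then for an approximate identity $(h_n)$ on the multiplicative group $\R_+^\times$ one has $\int h_n(\lambda)\arf_\lambda(f)\,d\lambda\to f$ in the Fr\'echet topology; a Fubini computation shows that each such average is, away from the fixed point $0$, absolutely continuous with density $\xi\mapsto\int \eta^{-1}h_n(\xi/\eta)\,d\mu(\eta)$, so the averages lie in $B^+_{\infty,0}$, and since $B^+_{\infty,0}$ is closed one concludes. Your approach instead isolates the singular part via the Lebesgue decomposition and then uses a Fubini trick to show that for any singular $\mu'$ without atom at $0$ the dilates $\arf_{\lambda_n}\mu'$ are mutually singular with $\mu'$ along a sequence $\lambda_n\to 1$, forcing $\|\arf_{\lambda_n}\mu'-\mu'\|_\rho\to 2\|\mu'\|_\rho$. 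The paper's method is the classical ``smoothing by group averaging'' device and transports verbatim to other continuous group actions; your argument is more hands-on, avoids vector-valued integration, and yields an explicit quantitative lower bound on the discontinuity of the orbit of a singular measure.
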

    \proof $(i)$~Follows from the well known properties of convolution of measures.

    $(ii)$~For $f\in B^+_{\infty,0}$ the associated measure $\mu$ is of the form $a\delta_0+h d\xi$ where $h$  is locally integrable and fulfills
    \begin{equation}\label{hh}
   \int_0^\infty  e^{-\alpha\xi}|h(\xi)|d\xi<\infty \qqq \alpha >0.
   \end{equation}
   The measure associated to $\arf_\lambda(f)$ is $a\delta_0+h_\lambda d\xi$ where
   $h_\lambda(\xi)=\frac 1\lambda h(\xi/\lambda)$ by \eqref{frobnorm} and one has
   \begin{equation*}
    \|\arf_\lambda(f)-\arf_{\lambda'}(f)\|_\rho=\int_0^\infty  e^{-\alpha\xi}|h_\lambda(\xi)-h_{\lambda'}(\xi)|d\xi
   \end{equation*}
   It follows that the map $\lambda\mapsto \arf_\lambda(f)\in B^+_\infty$ is continuous for the Frechet topology of $B^+_\infty$. The converse is proven using $\int h_n(\lambda)\arf_\lambda(f)d\lambda\to f$ for suitable functions $h_n$.\endproof

\subsection{Embedding in  the Mikusinski field $\mikfield$}\label{mikfieldr}

In operational calculus one introduces the Mikusinski ring $\mik(\R_+)$  whose elements are functions on $\R_+$ with locally integrable derivative, and where the product law is the Duhamel product:
\begin{equation}\label{convol2}
F\star G(t)=\frac{d}{dt}\int_0^t F(u)G(t-u)du
\end{equation}
(\cf \cite{GPS}). This ring   plays a main role in analysis, in view of some of its interesting properties among which we recall that $\mik(\R_+)$ is an {\em integral ring}  by the Titchmarsh's Theorem (\cf \cite{GPS}) and hence it has an associated {\em field of fractions} $\mikfield$ called the Mikusinski field.

The following proposition states the existence of a direct relation between the Frechet algebra $B^+_\infty$ (\cf Definition~\ref{Binf+}) and the Mikusinski field $\mikfield$.

For $f\in B^+_{\infty,0}$, $f(z)=
    \int_{0}^\infty e^{-\xi z} d\mu(\xi)$, we let
\begin{equation}\label{chirepmiku}
 \ffm(f)(\xi)=\mu([0,\xi]) \qqq \xi\geq 0.
 \end{equation}
 We follow the notation of \cite{GPS} and denote by $I$ the function $I(\xi)=\xi$ viewed as an element of $\mik(\R_+)$.

\begin{prop} \label{propextend}
$(i)$~The map $\btpluszero\ni f\mapsto \ffm(f)$ defines an isomorphism of $B^+_{\infty,0}$ with a sub-ring of
 $\mik(\R_+)$.\vspace{0.1in}

$(ii)$~There exists a unique function $\iota\in B^+_{\infty,0}$ such that $\ffm(\iota)=I$,  and one has $\iota(z)=\frac 1 z$, $\forall z>0$.\vspace{0.1in}

$(iii)$~The isomorphism $\ffm$, as in $(i)$, extends uniquely to an injective homomorphism of the Frechet algebra $B^+_\infty$ into
the field $\mikfield$.
\end{prop}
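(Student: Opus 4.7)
For part~(i), I would begin by writing any $f \in B^+_{\infty,0}$ as the Laplace transform of a measure $\mu = a\delta_0 + h \, d\xi$ with $a \in \R$ and $h \in L^1_{\mathrm{loc}}([0,\infty))$ satisfying the growth condition~\eqref{hh}; then $\ffm(f)(\xi) = a + \int_0^\xi h(t)\, dt$ is absolutely continuous with locally integrable derivative $h$, so it lies in $\mik(\R_+)$. Injectivity of $\ffm$ is immediate since a finite Borel measure on $[0,\infty)$ is determined by its distribution function. For the ring homomorphism property, the product $fg \in B^+_{\infty,0}$ corresponds to the convolution $\mu \star \nu$ of the underlying measures, and the core identity to check is
\begin{equation*}
(\mu \star \nu)([0,\xi]) \,=\, \frac{d}{d\xi} \int_0^\xi F(u) G(\xi - u) \, du, \qquad F = \ffm(f), \; G = \ffm(g),
\end{equation*}
so that $\ffm(fg)$ coincides with the Duhamel product $\ffm(f) \star \ffm(g)$. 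I would establish this by splitting $\mu$ and $\nu$ into their Dirac and absolutely continuous parts, expanding both sides into the resulting four terms, and matching them one by one via Fubini and differentiation under the integral.

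For part~(ii), the requirement $\ffm(\iota)(\xi) = \xi$ forces the measure $\mu_\iota$ to have distribution function $\xi$, hence $\mu_\iota$ must be the Lebesgue measure on $[0,\infty)$, of the form $0 \cdot \delta_0 + 1 \cdot d\xi$; this trivially satisfies~\eqref{convolrho}, so $\iota \in B^+_{\infty,0}$ and $\iota(z) = \int_0^\infty e^{-\xi z}\, d\xi = 1/z$.

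For part~(iii), the key technical observation is that for every $f \in B^+_\infty$ with associated measure $\mu$, the product $\iota \cdot f$ corresponds to $d\xi \star \mu$, which is absolutely continuous with density $\xi \mapsto \mu([0,\xi])$; the bound $|\mu([0,\xi])| \leq |\mu|([0,\xi])$ combined with Fubini shows that this density is locally integrable and satisfies~\eqref{hh}, so $\iota \cdot f \in B^+_{\infty,0}$. I would then define
\begin{equation*}
\hat{\ffm}(f) \;:=\; \ffm(\iota \cdot f) \,/\, I \;\in\; \mikfield
\end{equation*}
and verify four properties: $(a)$~$\hat\ffm$ extends $\ffm$, since for $f \in B^+_{\infty,0}$ part~(i) gives $\ffm(\iota f) = I \cdot \ffm(f)$; $(b)$~$\hat\ffm$ is multiplicative, because $\iota f$, $\iota g$ and $\iota^2 fg$ all lie in $B^+_{\infty,0}$, and the identity $\iota^2 fg = (\iota f)(\iota g)$ together with multiplicativity of $\ffm$ on $B^+_{\infty,0}$ gives $I \cdot \ffm(\iota fg) = \ffm(\iota f) \ffm(\iota g)$; $(c)$~$\hat\ffm$ is injective, because $B^+_\infty$ is an integral domain (the Titchmarsh support argument of Theorem~\ref{titsmarsh} extends verbatim to measures satisfying~\eqref{convolrho}), so $\iota f = 0$ forces $f = 0$; $(d)$~uniqueness is automatic, since any ring homomorphism $B^+_\infty \to \mikfield$ extending $\ffm$ must send $\iota$ to $I$ and therefore agree with $\hat\ffm$ on every $f = (\iota f)/\iota$.

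The main obstacle is the Fubini-and-differentiation identity underpinning part~(i). While classical in operational calculus, its rigorous verification requires careful bookkeeping of the four cross terms produced by the decompositions $\mu = a\delta_0 + h\, d\xi$ and $\nu = b\delta_0 + k\, d\xi$, and in particular the differentiation of the iterated integral $\int\!\!\int_{s+t \leq \xi} h(s) k(t) (\xi - s - t) \, ds \, dt$ to recover the convolution $h*k$.
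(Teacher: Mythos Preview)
Your proposal is correct and follows essentially the same route as the paper: decompose the measure as $a\delta_0 + h\,d\xi$, identify convolution of measures with the Duhamel product, take $\iota$ to be Lebesgue measure, and extend to $B^+_\infty$ via $f \mapsto \ffm(\iota f)/I$. You are more explicit than the paper in part~(iii), where you spell out the verification of multiplicativity and injectivity of the extension (the paper simply asserts these), and in part~(i) you propose a four-term expansion where the paper just remarks that the Duhamel product is the primitive of the convolution of derivatives; but these are differences of detail, not of strategy.
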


\proof
$(i)$~It is easy to check that the product \eqref{convol2} gives the primitive of the convolution product of the derivatives of $F$ and $G$. For $f\in \btpluszero$ the associated measure $\mu$ is of the form $F(0)\delta_0+dF$ where $dF=F'd\xi$ and $F'$ is locally integrable. Thus the convolution of the measures $\mu$ corresponds to the Duhamel product \eqref{convol2}, in terms of $\ffm(f)$. Hence, the map $f\mapsto \ffm(f)$ is an algebra homomorphism and it is injective by construction.

$(ii)$~The Lebesgue measure $d\xi$ fulfills \eqref{convolrho}, and one has
\begin{equation}\label{iotadefn}
\int_{0}^\infty e^{-z\xi }d\xi=\frac 1 z \qqq z>0
\end{equation}
thus $d\xi$  defines an element $\iota\in B^+_{\infty,0}$ such that  $\ffm(\iota)=I$.

$(iii)$~We first prove the following implication
\begin{equation}\label{idealagain}
  f\in B^+_\infty\implies \iota\cdot f\in B^+_{\infty,0}.
\end{equation}
Let $f\in B^+_{\infty}$, $f(z)=
    \int_{0}^\infty e^{-\xi z} d\mu(\xi)$
with $\int_{0}^\infty e^{-\alpha\xi}|d\mu|(\xi)<\infty$  $\forall\alpha>0$.
Let $\psi(u)=\int_0^u d\mu(\xi)$. Then one has
\[
\int_{0}^\infty e^{-uz}\psi(u)du=\int_0^\infty\left(\int_\xi^\infty
e^{-uz}du \right)d\mu(\xi)=\frac 1z \int_{0}^\infty e^{-\xi  z}d\mu(\xi)
\]
where the interchange of integration is justified by Fubini's theorem. It follows that
\[
(\iota\cdot f)(z)=\int_{0}^\infty e^{-u z}\psi(u)du
\]
and, since $\psi(u)$ is locally integrable, $\iota\cdot f\in B^+_{\infty,0}$. Next one defines
\begin{equation}\label{ffmdefn}
\ffm(f)=\frac{\ffm(\iota\cdot f)}{I}\in \mikfield\qqq f\in B^+_\infty.
\end{equation}
Since $\ffm(\iota)=I$ this is the unique extension of $\ffm$ as  a homomorphism from $B^+_\infty$ to $\mikfield$. It is well defined and yields the required injective homomorphism.\endproof

\section{Ideals and spectra of the  algebras $B^+_\infty$ and $B^+_{\infty,0}$}
\label{sec:4thsection}

We denote by $B^+_\C=B^+_\infty\otimes_\R \C$ and  $B^+_{\C,0}=B^+_{\infty,0}\otimes_\R \C$ the complexified algebras of the rings $B^+_\infty$ and $B^+_{\infty,0}$ (\cf Section~\ref{sec:Frechet}). In this section we investigate their ideals and Gelfand spectrum.

\subsection{The principal  ideals $\ker\,\theta_z$} In this section we show that for $z_0\in \C$, with $\Re(z_0)>0$
the kernel of the evaluation map $f\mapsto f(z_0)$ defines a {\em principal} ideal of  the
algebras $B^+_\C$ and $B^+_{\C,0}$.

We begin by stating the following lemma which allows one to divide $f$ by the polynomial $z-z_0$, when $f(z_0)=0$.

\begin{lem}\label{kertheta1}  Any $f\in  B^+_\C$ extends uniquely to an holomorphic function $z\mapsto f(z)$ of $z$, $\Re(z)>0$.

Let $z_0\in \C$ with $\Re(z_0)>0$. There exists a function $\mathfrak k\in B^+_{\C,0}$ such that
\begin{equation}\label{chifstart}
    f- \theta_{z_0}(f)=(z_0-z)\mathfrak k.
\end{equation}

\end{lem}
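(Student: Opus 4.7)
The plan is to work throughout with the representation of elements of $B^+_\C$ as Laplace transforms of complex Borel measures $\mu$ on $[0,\infty)$ satisfying $\int_0^\infty e^{-\alpha\xi}\,|d\mu|(\xi) < \infty$ for every $\alpha > 0$ (the complexification of \eqref{convolrho}). For part $(1)$, since $|e^{-\xi z}| = e^{-\xi\Re(z)}$, the integral $f(z) = \int_0^\infty e^{-\xi z}\,d\mu(\xi)$ converges absolutely for every $z$ in the half-plane $\{\Re(z) > 0\}$, and holomorphy there follows by differentiation under the integral sign: the formal derivative $-\int_0^\infty \xi e^{-\xi z}\,d\mu(\xi)$ converges locally uniformly because $\xi e^{-\xi\Re(z)}$ is dominated by a constant multiple of $e^{-\xi(\Re(z)-\epsilon)}$ for any small $\epsilon > 0$, which is integrable against $|d\mu|$. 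Uniqueness of the extension is automatic, since $\R_+^\times$ is a set of uniqueness for holomorphic functions on the half-plane.

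For part $(2)$, I would write the natural candidate $\mathfrak k(z) := (f(z_0) - f(z))/(z_0 - z)$ (extended at $z = z_0$ by the holomorphic value $-f'(z_0)$) explicitly as a Laplace transform and then show that the resulting density fits inside the ideal $\jj \subset B^+_{\C,0}$ of Proposition~\ref{ideal}. The key elementary identity is
\[
\frac{e^{-\xi z} - e^{-\xi z_0}}{z - z_0} \;=\; -\int_0^\xi e^{-uz - (\xi - u) z_0}\,du,
\]
obtained as the integral of $\tfrac{d}{du} e^{-u(z - z_0)}$. Substituting into the expression for $\mathfrak k(z)$ and interchanging the order of integration by Fubini yields
\[
\mathfrak k(z) \;=\; \int_0^\infty e^{-uz}\,\psi(u)\,du, \qquad \psi(u) \;:=\; \int_u^\infty e^{-(\xi - u) z_0}\,d\mu(\xi),
\]
so $\mathfrak k$ will lie in $\jj$ once $\psi$ is shown to be locally integrable and to satisfy $\int_0^\infty e^{-\alpha u} |\psi(u)|\,du < \infty$ for every $\alpha > 0$.

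The substantive step is exactly this estimate. Using $|\psi(u)| \leq \int_u^\infty e^{-(\xi - u)\Re(z_0)}\,|d\mu|(\xi)$ and swapping integrals on the positive function reduces the bound to
\[
\int_0^\infty e^{-\xi\Re(z_0)} \Bigl(\int_0^\xi e^{u(\Re(z_0) - \alpha)}\,du\Bigr)\,|d\mu|(\xi),
\]
which I would handle by splitting into three cases according to the sign of $\Re(z_0) - \alpha$. In each case the inner integral is bounded by $C_\alpha e^{\xi\max(\Re(z_0) - \alpha,\,0)}$ (with an additional factor of $\xi$ in the boundary case $\alpha = \Re(z_0)$), so the total is controlled by $\int_0^\infty e^{-\beta\xi}\,|d\mu|(\xi)$ for some $\beta > 0$, finite by the hypothesis on $\mu$. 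With $\mathfrak k$ thus placed in $\jj \subset B^+_{\C,0}$, the identity $f - \theta_{z_0}(f) = (z_0 - z)\mathfrak k$ holds by construction on $\{\Re(z) > 0\}$ (and in particular on $\R_+^\times$). The main obstacle is purely organizational, namely the bookkeeping behind the two applications of Fubini and the case split on $\alpha$ versus $\Re(z_0)$; this is where one uses the full strength of the Frechet integrability condition $\int e^{-\alpha\xi}\,|d\mu| < \infty$ for all $\alpha > 0$ rather than some fixed weight.
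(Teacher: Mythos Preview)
Your argument is correct and follows essentially the same route as the paper: the same Laplace-transform representation, the same integral identity for $(e^{-\xi z}-e^{-\xi z_0})/(z-z_0)$, the same function $\psi(u)=\int_u^\infty e^{-(\xi-u)z_0}\,d\mu(\xi)$, and the same Fubini estimate. Two small remarks: there is a harmless sign slip in your definition of $\mathfrak k$ (with your convention one gets $f-\theta_{z_0}(f)=(z-z_0)\mathfrak k$ rather than $(z_0-z)\mathfrak k$), and your three-case split on $\alpha$ versus $\Re(z_0)$ is more than strictly necessary, since $\int_0^\infty e^{-\alpha u}|\psi(u)|\,du$ is monotone decreasing in $\alpha$, so the paper only treats the range $0<\alpha<\Re(z_0)$.
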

\proof Since $f\in B^+_\C$, there exists a complex Radon measure $\mu$ on $\R_+$ such that
  \begin{equation}\label{expressf}
f(u)=\int_0^\infty e^{-\xi u}d\mu(\xi)\qqq u>0,\qquad \ \int_0^\infty e^{-\alpha\xi}|d\mu(\xi)|<\infty\qqq \alpha>0.
 \end{equation}
 The integral
  \begin{equation}\label{chif0}
    \theta_{z_0}(f)=\int_0^\infty e^{-\xi z_0}d\mu(\xi)
\end{equation}
is finite and bounded in absolute value by the norm $\|f\|_\rho$, for $\rho=e^{-1/\Re(z_0)}$. Let $z\neq z_0$, then one has
\begin{equation}\label{expequbis}
 \frac{e^{-z\xi}-e^{-z_0\xi}}{z_0-z}= \int_0^\xi e^{-(z-z_0)u-z_0\xi}du
 \end{equation}
 and when $z\to z_0$ both sides of the above equality converge to the function $\xi e^{-z_0\xi}$.
The equality
 \begin{equation}\label{psiubis}
 \psi(u)=\int_u^\infty e^{z_0(u-\xi)}\, d\mu(\xi)\qqq u\in \R_+
 \end{equation}
 defines a complex valued function whose size is controlled by
 \begin{equation}\label{sizepsi}
 |\psi(u)|\leq\int_u^\infty e^{\Re(z_0)(u-\xi)}\, |d\mu(\xi)|.
 \end{equation}
 Next, we show that $\int_0^\infty e^{-\alpha u}|\psi(u)|du<\infty$, for $\alpha>0$.
 When $\alpha>0$ and $\alpha< \Re(z_0)$, by implementing \eqref{expequbis} (for $z=\alpha$ and with $\Re(z_0)$ instead of $z_0$) and Fubini's theorem to interchange the integrals, one has
 \begin{align*}
 \int_0^\infty e^{-\alpha u}|\psi(u)|du&\leq
 \int_0^\infty\int_u^\infty e^{-\alpha u}e^{\Re(z_0)(u-\xi)}|d\mu(\xi)|du=\\
 &=(\Re(z_0)-\alpha)^{-1}\int_0^\infty\left(e^{-\alpha\xi}-e^{-\Re(z_0)\xi}\right)|d\mu(\xi)|.
 \end{align*}
  This proves that the formula
 \begin{equation}\label{chik}
    \mathfrak k(z)=\int_0^\infty e^{-u z}\psi(u)du
 \end{equation}
 defines an element $\mathfrak k\in B^+_{\C,0}$ whose norm satisfies, for $\rho_0=e^{-1/\Re(z_0)}$
 \[
 \|\mathfrak k\|_\rho\leq  \int_0^\infty\int_u^\infty e^{u/\log(\rho)}e^{\Re(z_0)(u-\xi)}|d\mu(\xi)|du
 =(\|f\|_\rho-\|f\|_{\rho_0})/(1/\log(\rho)-1/\log(\rho_0)).
 \]
 Moreover, using again \eqref{expequbis} with $z \neq z_0$, one obtains
  \[
 \frac{f(z)-f(z_0)}{z_0-z}=\int_0^\infty \frac{e^{-z\xi}-e^{-z_0\xi}}{z_0-z}d\mu(\xi)=
 \int_0^\infty \int_0^\xi e^{-(z-z_0)u-z_0\xi}du\, d\mu(\xi)
 \]
 which gives
 \begin{equation}\label{chik1}
    \mathfrak k(z)=\frac{f(z)-f(z_0)}{z_0-z}
 \end{equation}
 and the equality \eqref{chifstart} follows.  \endproof

\begin{prop}\label{kertheta}
$(i)$~Let $z_0\in \C$ with $\Re(z_0)>0$. Then
\[
\theta_{z_0}: B^+_\C\to \C,\quad \theta_{z_0}(f)=f(z_0)
\]
defines a complex character  of the algebra $B^+_\C$. One has $\theta_1=\theta$.\vspace{0.1in}

$(ii)$~The ideal $\ker(\theta_{z_0})\subset B^+_\C$ is  generated by the function $\iota-z_0^{-1}$, with
\begin{equation}\label{defniota}
   \iota(z)=\int_0^\infty e^{-\xi z}d\xi=z^{-1}\qqq z>0.
\end{equation}

$(iii)$~$\iota-z_0^{-1}\in B^+_{\C,0}$ and $\iota-z_0^{-1}$ generates the ideal
$\ker(\theta_{z_0})\cap B^+_{\C,0}\subset B^+_{\C,0}$.
\end{prop}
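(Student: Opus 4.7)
The plan is to reduce everything to the division identity furnished by Lemma \ref{kertheta1}. For part $(i)$, writing $f \in B^+_\C$ as the Laplace transform of a complex Radon measure $\mu$ satisfying the growth condition \eqref{expressf}, the integral $\theta_{z_0}(f) = \int_0^\infty e^{-\xi z_0} d\mu(\xi)$ converges absolutely with $|\theta_{z_0}(f)| \leq \|f\|_\rho$ for $\rho = e^{-1/\Re(z_0)}$, so $\theta_{z_0}$ is a continuous $\C$-linear functional. Multiplicativity and $\theta_{z_0}(1) = 1$ follow because the algebra structure on $B^+_\C$ is pointwise multiplication of the associated holomorphic functions on $\{\Re(z) > 0\}$ (equivalently, the Laplace transform intertwines convolution of measures with pointwise product of functions); and $\theta_1 = \theta$ is immediate from \eqref{22} since $\arf_1 = \mathrm{id}$.

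For $(ii)$, the inclusion $(\iota - z_0^{-1})B^+_\C \subseteq \ker(\theta_{z_0})$ holds since $\iota(z_0) = 1/z_0$. For the reverse, take $f \in \ker(\theta_{z_0})$ and invoke Lemma \ref{kertheta1} to obtain $\mathfrak{k} \in B^+_{\C,0}$ with $f(z) = (z_0 - z)\mathfrak{k}(z)$. The crucial observation is that this identity can be rearranged as $z\mathfrak{k}(z) = z_0\mathfrak{k}(z) - f(z)$: although the naive operation ``multiplication by $z$'' is not an operation on $B^+_\C$, the right hand side manifestly defines an element of $B^+_\C$. This motivates setting
\begin{equation*}
    g := z_0^2\,\mathfrak{k} - z_0 f \in B^+_\C .
\end{equation*}
A direct computation gives $g(z) = z_0^2\mathfrak{k}(z) - z_0(z_0 - z)\mathfrak{k}(z) = zz_0\,\mathfrak{k}(z)$, so that
\begin{equation*}
(\iota(z) - z_0^{-1})\, g(z) = \frac{z_0 - z}{z z_0}\cdot zz_0\,\mathfrak{k}(z) = (z_0 - z)\mathfrak{k}(z) = f(z).
\end{equation*}
Since the map assigning to an element of $B^+_\C$ its underlying holomorphic function on $\{\Re(z) > 0\}$ is injective (a consequence of the uniqueness clause in Proposition \ref{propnbv}), this pointwise equality promotes to the genuine factorization $f = (\iota - z_0^{-1})\cdot g$ inside $B^+_\C$.

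For $(iii)$, note that $\iota$ is the Laplace transform of Lebesgue measure on $[0,\infty)$, hence $\iota \in \jj$, and therefore $\iota - z_0^{-1} \in \jj + \C = B^+_{\C,0}$. When $f$ additionally lies in $B^+_{\C,0}$, the element $g = z_0^2\mathfrak{k} - z_0 f$ is a $\C$-linear combination of elements of $B^+_{\C,0}$ (Lemma \ref{kertheta1} delivers $\mathfrak{k}$ already in $B^+_{\C,0}$), so $g \in B^+_{\C,0}$ and the same factorization exhibits $\iota - z_0^{-1}$ as a generator of $\ker(\theta_{z_0}) \cap B^+_{\C,0}$ inside $B^+_{\C,0}$. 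The only conceptual subtlety in the whole argument is keeping in mind that ``multiplication by $z$'' is merely shorthand for the combination $z_0\mathfrak{k} - f$; this is legitimate precisely because the Laplace transform realizes $B^+_\C$ as a subalgebra of the holomorphic functions on the right half plane, and it is this reinterpretation that bootstraps Lemma \ref{kertheta1} from a statement about $z_0 - z$ to a factorization by the genuine algebra element $\iota - z_0^{-1}$.
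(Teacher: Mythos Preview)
Your proof is correct and follows essentially the same route as the paper. In particular your factoring element $g = z_0^2\mathfrak{k} - z_0 f$ coincides exactly with the paper's $h = -z_0(f - f(z_0)) + z_0^2\mathfrak{k}$ once one sets $f(z_0)=0$, and the algebraic identity $(\iota - z_0^{-1})g = f$ is verified the same way; the paper simply derives this from the partial-fraction identity $\dfrac{1}{z^{-1}-z_0^{-1}} = -z_0 + \dfrac{z_0^2}{z_0-z}$ rather than from your ``multiply by $z$'' heuristic.
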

\proof $(i)$~follows from the first statement of Lemma~\ref{kertheta1}.

$(ii)$~ Since one knows that
 \begin{equation*}
 \int_0^\infty e^{-\xi z}d\xi=\frac 1z,\qquad \int_0^\infty e^{\xi/\log(\rho)}d\xi<\infty \qqq\rho\in (0,1)
\end{equation*}
one derives that $\iota\in B^+_{\C,0}$.
Let $f\in  B^+_\C$, then by applying Lemma~\ref{kertheta1}, one sees that there exists a function $\mathfrak k\in B^+_{\C,0}$ such that \eqref{chifstart} holds. One then obtains
\begin{equation}\label{chif1}
f(z)=f(z_0)+(\frac 1z-\frac{1}{z_0})h(z)
\,, \ \ h=-z_0(f- f(z_0))+z_0^2 \mathfrak k
\end{equation}
since
\[
\frac{1}{z^{-1}-z^{-1}_0}=-z_0+\frac{1}{z_0-z}z_0^{2}.
\]

$(iii)$~By assuming that $f\in \ker(\theta_{z_0})$ we obtain the factorization $f=(\iota-z_0^{-1}) h$.
$(iii)$ then follows since $\iota\in B^+_{\C,0}$.  \endproof

\begin{lem}\label{kertheta2} Let $\alpha:B_\C^+\to \C$ be a ring homomorphism. Then, if $T_0=\alpha(\iota)\neq 0$ one has $\Re(T_0)>0$ and
\begin{equation}\label{to}
    \alpha=\theta_{z_0}, \ \ z_0=1/T_0.
\end{equation}
Similarly, the maps $\theta_{z_0}: B^+_{\C,0}\to \C$, with $\Re(z_0)>0$, define all the characters $\alpha$ of $B^+_{\C,0}$ with $\alpha(\iota)\neq 0$.
\end{lem}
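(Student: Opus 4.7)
The strategy is to leverage the principal-ideal description of Proposition~\ref{kertheta}: once $z_0:=1/T_0$ is known to lie in the half-plane $\{\Re(z_0)>0\}$, the ideal $\ker(\theta_{z_0})$ is generated by $\iota-z_0^{-1}$, and since $\alpha(\iota-z_0^{-1})=T_0-z_0^{-1}=0$, the homomorphism $\alpha$ will automatically factor through $\theta_{z_0}$. All of the real work therefore lies in proving positivity of $\Re(z_0)$ (equivalently, of $\Re(T_0)$). Throughout I may assume $\alpha$ is unital: a ring homomorphism into the field $\C$ satisfies $\alpha(1)^2=\alpha(1)$, and the alternative $\alpha(1)=0$ would force $\alpha\equiv 0$, contradicting $T_0\neq 0$.

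\textbf{Step 1 (the main obstacle): positivity $\Re(z_0)>0$.} The key device is the family of ``exponential resolvents''. For each complex $\lambda$ with $\Re(\lambda)\geq 0$, the measure $e^{-\lambda\xi}\,d\xi$ satisfies the integrability bound \eqref{convolrho}, hence it defines an element $e_\lambda\in B^+_{\C,0}$ whose Laplace transform is $e_\lambda(z)=(z+\lambda)^{-1}$. The elementary convolution computation
\begin{equation*}
d\xi \,\star\, e^{-\lambda\xi}d\xi \;=\;\lambda^{-1}\bigl(1-e^{-\lambda\xi}\bigr)\,d\xi
\end{equation*}
translates, via the correspondence between the product in $\btplusarc$ and convolution of measures (Theorem~\ref{mainwitt}), into the ring identity $\lambda\,\iota\cdot e_\lambda = \iota - e_\lambda$, i.e.\
\begin{equation*}
e_\lambda\,(1+\lambda\iota)\;=\;\iota .
\end{equation*}
Applying $\alpha$ yields $\alpha(e_\lambda)(1+\lambda T_0)=T_0$. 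Suppose, for contradiction, $\Re(z_0)\leq 0$ and set $\lambda:=-z_0$; then $\Re(\lambda)\geq 0$, so $e_\lambda\in B^+_{\C,0}$ is a genuine element, yet $1+\lambda T_0=1-z_0T_0=0$, forcing $T_0=0$, a contradiction. Hence $\Re(z_0)>0$, which is the same as $\Re(T_0)>0$.

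\textbf{Step 2: identifying $\alpha=\theta_{z_0}$.} With $\Re(z_0)>0$ secured, the character $\theta_{z_0}$ is defined and Proposition~\ref{kertheta}(ii) applies. For arbitrary $f\in B^+_\C$ pick $g\in B^+_\C$ with $f-f(z_0)=(\iota-z_0^{-1})\,g$; then
\begin{equation*}
\alpha(f)-f(z_0)\;=\;(T_0-z_0^{-1})\,\alpha(g)\;=\;0,
\end{equation*}
so $\alpha(f)=f(z_0)=\theta_{z_0}(f)$, as desired. The statement for $B^+_{\C,0}$ is obtained by the identical argument, invoking Proposition~\ref{kertheta}(iii) in place of (ii): the factorization of $f-f(z_0)$ takes place inside $B^+_{\C,0}$ because $\iota\in B^+_{\C,0}$. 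The only delicate point in the whole proof is the positivity argument of Step~1; everything else is formal manipulation of the principal-ideal structure established in Proposition~\ref{kertheta}.
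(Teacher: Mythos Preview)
Your proof is correct and follows essentially the same route as the paper's: the paper also introduces the element $h_\lambda\in B^+_{\C,0}$ corresponding to the measure $e^{-\lambda\xi}d\xi$ (your $e_\lambda$), derives the identity $(\lambda\iota+1)h_\lambda=\iota$, and applies $\alpha$ to conclude $1+\lambda T_0\neq 0$ for all $\lambda$ with $\Re(\lambda)\geq 0$, $\lambda\neq 0$; it then finishes via the factorization $f=f(z_0)+(\iota-T_0)h$ from Proposition~\ref{kertheta}, exactly as you do. The only cosmetic difference is that you phrase Step~1 as a contrapositive ($\lambda=-z_0$) whereas the paper states the non-vanishing directly.
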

\proof For any $\lambda\in \C$ with $\Re(\lambda)\geq 0$, $\lambda\neq 0$, one has
\[
 \frac{\varh}{\lambda \varh+1}=\int_0^\infty e^{-\xi/\varh}e^{-\lambda\xi}d\xi.
 \]
Thus there exists a function  $h_\lambda\in B^+_{\C,0}$ such that $(\lambda \iota+1) h_\lambda =\iota$. Then one obtains
\[
\alpha(\lambda \iota+1)\alpha(h_\lambda)=\alpha(\iota)\neq 0
\]
and $\lambda\alpha(\iota)+1\neq 0$ which shows that $\Re(\alpha(\iota))>0$. It follows from Lemma \ref{kertheta1} that the map $\theta_{z_0}$, $ z_0=1/T_0$ defines a character of $B_\C^+$. Moreover for any $f\in B_\C^+$ there exists $h\in B^+_\C$ such that \eqref{chif1} holds \ie
\[
    f=f(z_0)+(\iota-T_0)h.
\]
One then obtains $\alpha(f)=f(z_0)$ since $\alpha(\iota-T_0)=0$. \endproof

\subsection{Gelfand spectrum of $B^+_{\C,0}$}

We are now ready to compute the Gelfand spectrum of the Frechet algebra $B^+_{\C,0}$.

\begin{thm}\label{kertheta3} The Gelfand spectrum $\Spec(B^+_{\C,0})$ is the one point compactification $Y=\C^+\cup\{\infty\}$  of the open half-plane $\C^+=\{z\in \C\mid \Re(z)>0\}$. The one parameter group $\arf_\lambda$ acts on $\C^+$ by scaling  $z\to \lambda z$ and it fixes $\infty\in Y$.
\end{thm}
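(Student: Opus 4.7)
The plan is to identify $\Spec(B^+_{\C,0})$ first set-theoretically and then topologically. By Lemma~\ref{kertheta2} the continuous characters $\alpha$ with $\alpha(\iota)\neq 0$ are precisely the evaluations $\theta_{z_0}$ at $z_0 = 1/\alpha(\iota)\in\C^+$. The $\C$-linear extension of the augmentation $\epsilon$ from Proposition~\ref{augmentation} is a continuous character on $B^+_{\C,0}$ with $\epsilon(\iota)=0$, supplying the point $\infty$: continuity follows from $|\epsilon(f)|\leq\|f\|_\rho$ for every $\rho\in(0,1)$, since for $f=a\delta_0+h\,d\xi$ one has $\epsilon(f)=a$ and $|a|\leq\|f\|_\rho$, while $\epsilon(\iota)=0$ because $\iota$ corresponds to Lebesgue measure.

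The decisive step is uniqueness: any continuous character $\alpha$ with $\alpha(\iota)=0$ must coincide with $\epsilon$. Since $\alpha$ is continuous, $\ker\alpha$ is closed and contains the principal ideal $\iota\cdot B^+_{\C,0}$. The plan is to prove
\[
\overline{\iota\cdot B^+_{\C,0}}\,=\,\jj_\C\,=\,\ker\epsilon,
\]
from which $\ker\alpha\supseteq\ker\epsilon$ and, by maximality of the codimension-one ideal $\ker\epsilon$, one gets $\alpha=\epsilon$. Now $\iota\cdot(a\delta_0+h\,d\xi)$ is the absolutely continuous measure with continuous density $u\mapsto a+\int_0^u h(\xi)\,d\xi$, so $\iota\cdot B^+_{\C,0}\subseteq\jj_\C$. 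For density: given $\phi\,d\xi\in\jj_\C$, approximate $\phi$ by smooth compactly supported $\phi_n$ (first truncate to $[0,n]$, then mollify). Each $\phi_n'\in C^\infty_c\subset\bigcap_{\alpha>0}L^1(e^{-\alpha\xi}d\xi)$, so
\[
\phi_n\,d\xi\,=\,\iota\cdot\bigl(\phi_n(0)\delta_0+\phi_n'\,d\xi\bigr)\,\in\,\iota\cdot B^+_{\C,0},
\]
and $\phi_n\to\phi$ in every weighted norm $\|\cdot\|_\rho$ by standard estimates on truncation and convolutional approximation in weighted $L^1$.

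For the topology, I would check that the bijection $z_0\mapsto\theta_{z_0}$, $\infty\mapsto\epsilon$ is a homeomorphism onto the Gelfand spectrum. On $\C^+$ the two topologies coincide: pointwise convergence of Laplace transforms $f(z_n)\to f(z_0)$ for all $f\in B^+_{\C,0}$ characterises $z_n\to z_0$ in the usual topology, using dominated convergence in one direction and the fact that $B^+_{\C,0}$ separates points of $\C^+$ (e.g.\ via $\iota(z)=1/z$) in the other. At $\infty$, the basic Gelfand neighborhoods $\{\alpha:|\alpha(f_i)-\epsilon(f_i)|<\eps_i\}$ are to be matched with complements of compact subsets of $\C^+$, using the asymptotics $f(z)\to\epsilon(f)$ captured in Proposition~\ref{augmentation}. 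Finally, the Frobenius action is immediate from \eqref{frobnorm}: since $\arf_\lambda(f)(z)=f(\lambda z)$, one has $\arf_\lambda^*(\theta_{z_0})(f)=f(\lambda z_0)=\theta_{\lambda z_0}(f)$, yielding the scaling $z\mapsto\lambda z$ on $\C^+$; and $\arf_\lambda^*(\epsilon)(f)=\lim_{z\to+\infty}f(\lambda z)=\epsilon(f)$, so $\infty$ is fixed.

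The main obstacle is the topological identification at $\infty$: verifying that the weak-$*$ Gelfand neighborhoods of $\epsilon$ correspond precisely to complements of compact subsets of $\C^+$ requires the full projective Frechet structure rather than any single Banach-algebra estimate, and one must control the Laplace transform uniformly as $z$ escapes compact subsets of $\C^+$ from any direction.
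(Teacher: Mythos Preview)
Your proof is correct and follows essentially the same route as the paper. Both arguments invoke Lemma~\ref{kertheta2} for the case $\alpha(\iota)\neq 0$, and for $\alpha(\iota)=0$ both show that the principal ideal $(\iota)$ is dense in $\ker\epsilon=\jj_\C$ via integration by parts on smooth compactly supported densities: your identity $\phi_n\,d\xi=\iota\cdot(\phi_n(0)\delta_0+\phi_n'\,d\xi)$ is exactly the paper's formula $\int_0^\infty e^{-\xi z}k'(\xi)\,d\xi=-k(0)+z\int_0^\infty e^{-\xi z}k(\xi)\,d\xi$ rewritten in measure language (the paper restricts to $k(0)=0$, which is unnecessary, as your formulation makes clear). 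The Frobenius computation is identical.

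The one point of difference is that you go further than the paper: the paper's proof establishes only the set-theoretic bijection between characters and $Y=\C^+\cup\{\infty\}$, and does not verify the Gelfand topology against the one-point compactification topology. Your discussion of the topology, including the honest flag about controlling weak-$*$ neighborhoods of $\epsilon$, is additional care that the paper simply omits.
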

\proof Let $\alpha\in\Spec\, B^+_{\C,0}$ be a continuous homomorphism $\alpha: B^+_{\C,0}\to \C$. Let us first assume that $T_0=\alpha(\iota)\neq 0$. Then by Lemma~\ref{kertheta2} one has $\alpha=\theta_{z_0}$, $z_0=1/T_0$.

Assume now that $\alpha(\iota)=0$. Then, for any smooth function $k(\xi)$ with compact support one has
$$
\int_0^\infty e^{-\xi z}k'(\xi)d\xi=k(0)+z \int_0^\infty e^{-\xi z}k(\xi)d\xi
$$
This shows that if $k(0)=0$ the associated element of $B^+_{\C,0}$ belongs to the ideal generated by $\iota$. Thus this ideal is dense (for the norm $\|f\|_\rho$, \cf \S\ref{rn}) in the kernel of the character
\begin{equation}\label{charF}
  \theta_\infty: B^+_{\C,0}\to\C,\quad  \theta_\infty(f)=\lim_{z\to \infty}f(z).
\end{equation}
Thus by continuity we get $\alpha=\theta_\infty$, if $\alpha(\iota)=0$. This shows that $\Spec(B^+_{\C,0})$ is the space of characters $\theta_z: B^+_{\C,0}\to\C$, for $z\in Y=\C^+\cup\{\infty\}$. The action of $\arf_\lambda$ is such that
\begin{equation}\label{chialpha}
    \theta_z(\arf_\lambda(f))=\theta_{\lambda z}(f)\qqq f\in B^+_{\C,0}, \ \lambda\in \R^\times_+, \
    z\in Y.
\end{equation}
\endproof

\begin{cor} \label{Ymax} The map $Y\ni z\mapsto \Ker(\theta_z)\subset B^+_{\C,0}$ defines a bijection of $Y$ with the space of maximal closed ideals of the Frechet algebra $B^+_{\C,0}$.
\end{cor}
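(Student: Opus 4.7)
The strategy combines the Gelfand spectrum computation of Theorem~\ref{kertheta3} with the principal ideal description of Proposition~\ref{kertheta}. For each $z\in Y$ the character $\theta_z:B^+_{\C,0}\to\C$ is continuous and surjective, so $\Ker(\theta_z)$ is a codimension-one closed ideal and hence automatically a maximal closed ideal. Distinct points of $Y$ yield distinct characters, and hence distinct kernels: two unital characters sharing a kernel both descend to the same codimension-one quotient $\C$, on which they must agree. This shows that $z\mapsto\Ker(\theta_z)$ is a well-defined injection into the space of maximal closed ideals.

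For surjectivity, fix a maximal closed ideal $M\subset B^+_{\C,0}$ and divide into cases according to whether $\iota\in M$. If $\iota\in M$, the principal ideal $(\iota)$ is contained in $M$; by the density argument at the end of the proof of Theorem~\ref{kertheta3} (based on the identity $\int_0^\infty e^{-\xi z}k'(\xi)\,d\xi=k(0)+z\int_0^\infty e^{-\xi z}k(\xi)\,d\xi$ applied to smooth compactly supported $k$ with $k(0)=0$), the ideal $(\iota)$ is $\|\cdot\|_\rho$-dense in $\Ker(\theta_\infty)$ for every $\rho\in(0,1)$, hence Frechet-dense. Since $M$ is closed, $\Ker(\theta_\infty)\subseteq M$, and because $\Ker(\theta_\infty)$ already has codimension one, maximality forces $M=\Ker(\theta_\infty)$. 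If instead $\iota\notin M$, the aim is to exhibit $z_0\in\C^+$ with $\iota-z_0^{-1}\in M$: Proposition~\ref{kertheta}(iii) will then provide $\Ker(\theta_{z_0})=(\iota-z_0^{-1})\subseteq M$ and maximality will upgrade this to $M=\Ker(\theta_{z_0})$. Equivalently, one must produce a continuous character $A:=B^+_{\C,0}/M\to\C$, which Theorem~\ref{kertheta3} will then identify with some $\theta_{z_0}$.

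The hard part is this last step: extracting scalarity of the image $\bar\iota\in A$ from the purely closed-ideal maximality of $M$. In a Banach algebra the classical Gelfand--Mazur theorem would finish once $A$ were known to have no nonzero proper closed ideals; in the present Frechet setting I would reduce to the Banach case. Pick $\rho\in(0,1)$, denote by $\hat B_\rho$ the Banach completion of $B^+_{\C,0}$ under $\|\cdot\|_\rho$, and let $\hat M$ be the closure of $M$ inside $\hat B_\rho$. The key technical point is that $\hat M\neq\hat B_\rho$, i.e., that $M$ is not $\|\cdot\|_\rho$-dense in $B^+_{\C,0}$; the holomorphic family $\lambda\mapsto h_\lambda$ from Lemma~\ref{kertheta2} on $\overline{\C^+}\setminus\{0\}$, satisfying $(\lambda\iota+1)h_\lambda=\iota$, should obstruct such density precisely when $\iota\notin M$, because $\iota$ would otherwise arise as a $\|\cdot\|_\rho$-limit of products lying in $M$. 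Gelfand--Mazur applied to the resulting Banach quotient $\hat B_\rho/\hat M$ then yields $\bar\iota=z_0^{-1}$ for some $z_0\in\C^+$, and $\iota-z_0^{-1}\in M$ concludes the argument.
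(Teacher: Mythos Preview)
Your injectivity argument and the case $\iota\in M$ are both fine. The genuine gap is in the case $\iota\notin M$: you correctly isolate the crucial step as showing that the $\|\cdot\|_\rho$-closure $\hat M$ of $M$ is proper in the Banach completion $\hat B_\rho$ for some $\rho$, but the justification you offer does not establish this. You write that if $\hat M=\hat B_\rho$ then ``$\iota$ would arise as a $\|\cdot\|_\rho$-limit of products lying in $M$'', implicitly contradicting $\iota\notin M$. But $M$ is only assumed closed for the \emph{Fr\'echet} topology, i.e., for the whole family $(\|\cdot\|_\rho)_{\rho\in(0,1)}$; closedness for a single seminorm $\|\cdot\|_\rho$ is strictly stronger, so a $\|\cdot\|_\rho$-limit of elements of $M$ need not lie in $M$. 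The holomorphic family $h_\lambda$ from Lemma~\ref{kertheta2} tells you about invertibility of $\lambda\iota+1$, which constrains the spectrum of $\bar\iota$ in the quotient, but does not by itself prevent $\|\cdot\|_\rho$-density of $M$.

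By contrast, the paper's proof is a single sentence: it invokes the generalized Gelfand--Mazur theorem for (commutative, locally $m$-convex) Fr\'echet algebras, which asserts directly that every closed maximal ideal is the kernel of a continuous character; combined with Theorem~\ref{kertheta3} this finishes. What you are attempting is essentially to reprove that theorem in this particular case. That is a legitimate goal, but the nontrivial content---that some Banach-completion image of $M$ remains proper, or equivalently that the quotient Fr\'echet algebra $B^+_{\C,0}/M$ admits a continuous character---is exactly the Arens--Michael machinery you have not supplied. If you want a self-contained argument, one route is to show that in the quotient $A=B^+_{\C,0}/M$ the element $\bar\iota$ has nonempty spectrum (using that $A$ is locally $m$-convex, so invertibles are open and the spectrum is the union of the Banach spectra), and then that for any $\mu$ in that spectrum the closed ideal generated by $\bar\iota-\mu$ is proper, hence zero, forcing $\bar\iota=\mu$.
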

\proof This follows from the generalized Gelfand-Mazur theorem which shows that for any closed maximal ideal $J\subset B^+_{\C,0}$ there exists a continuous character $B^+_{\C,0}\to\C$ whose kernel is  $J$. \endproof

\section{The complex case and oscillatory integrals}\label{complex}

In the real case it was simple to evaluate the asymptotic behavior of integrals of real exponentials as in Proposition \ref{augmentation}. On the other hand, in the complex case we shall see that oscillatory integrals with several critical points  provide typical examples of application of the (multi-valued) law of addition in hyperfields. Rather than developing the general case we focus  on a well-known  example of asymptotic behavior of integrals of imaginary exponentials, namely  
 the case of the Airy function (see \cite{Stokes,Berry,Dingle}).  This function is defined by the formula
\begin{equation}\label{airy1}
   \ai(x)=\frac{1}{2\pi}\int_{-\infty}^\infty e^{i\left(\frac{s^3}{3}+xs\right)}ds.
\end{equation}
The integral makes sense in the complex domain along a path slightly above the real axis, \ie of the form $C=[-\infty+i\epsilon ,\infty+i\epsilon]$ with $\epsilon >0$.
This function fulfills the differential equation
\begin{equation}\label{diffequ}
    y''-zy=0
\end{equation}
and it is entire and given by the series
\begin{equation}\label{series}
\ai(z)=3^{-2/3}\sum_{n=0}^\infty\frac{z^{3n}}{9^n n!\Gamma(n+2/3)}
-3^{-4/3}\sum_{n=0}^\infty\frac{z^{3n+1}}{9^n n!\Gamma(n+4/3)}
\end{equation}
$$    =\frac{1}{3^{2/3} \Gamma(\frac{2}{3})}-\frac{z}{3^{1/3} \Gamma(\frac{1}{3})}+\frac{z^3}{6\times 3^{2/3} \Gamma(\frac{2}{3})}-\frac{z^4}{12 \left(3^{1/3} \Gamma(\frac{1}{3})\right)}+\ldots
$$

\begin{figure}
\begin{center}
\includegraphics[scale=0.9]{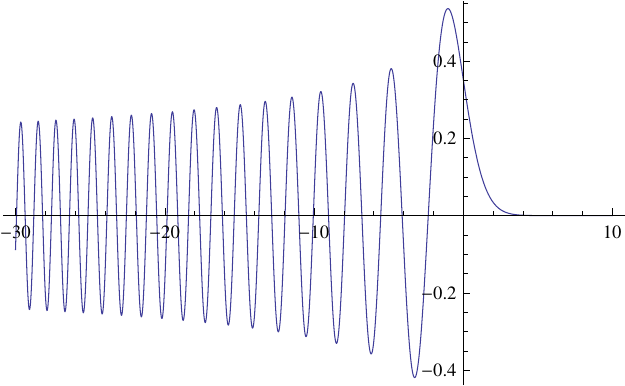}
\caption{Graph of the Airy function}
\end{center}
\end{figure}

We first consider the asymptotic expansion of $\ai(x)$ at infinity, on the positive real axis:
\begin{equation}\label{expandairy2}
\ai(z)\sim \frac{1}{4\pi^{3/2}}z^{-\frac 14} e^{-\frac 23 z^{3/2}}\sum\frac{\Gamma(n+\frac 56)
\Gamma(n+\frac 16)}{n!}(-3/4)^n z^{-\frac{3n}{2}}\,.
\end{equation}
The series on the right hand side is not convergent and the {\bf strong} meaning of the expansion is that the {\em ratio} of the left hand side by the truncated right hand side is ``under control'' \ie it is of the form $1+O(z^{-m})$, with $m$ depending on the truncation. For instance, the ratio of
$\ai(\frac 1T)$ and the approximation
$$
h_5(T)=e^{-\frac{2}{3} \left(\frac{1}{T}\right)^{3/2}} \left(\frac{T^{1/4}}{2 \sqrt{\pi }}-\frac{5 T^{7/4}}{96 \sqrt{\pi }}+\frac{385 T^{13/4}}{9216 \sqrt{\pi }}-\frac{85085 T^{19/4}}{1327104 \sqrt{\pi }}+\frac{37182145 T^{25/4}}{254803968 \sqrt{\pi }}\right)$$
is of the form $1+O(T^{15/2})$ since the next term in the expansion is
$$
-\frac{5391411025 \ T^{31/4}}{12230590464 \sqrt{\pi }}\sim -0.248702\ T^{31/4}
$$
while the first term is $\frac{T^{1/4}}{2 \sqrt{\pi }}$. The parameter $T$ in this approximation is real and positive and one lets $T\to 0+$. For each $\alpha>0$ we have a natural subgroup $G_\alpha$ of the multiplicative group of non-zero functions defined by the condition
\begin{equation}\label{subgroup}
G_\alpha=\{h\mid h(T)=1+O(T^\alpha)\ \ \text{for}\ T\to 0+\}.
\end{equation}
With this notation we can rewrite the above equivalence of the functions $\ai(\frac 1T)$ and $h_5(T)$ as
$$
\ai(\frac 1T)/h_5(T)\in G_\alpha \,, \ \alpha =\frac{15}{2}.
$$
\begin{figure}
\begin{center}
\includegraphics[scale=0.7]{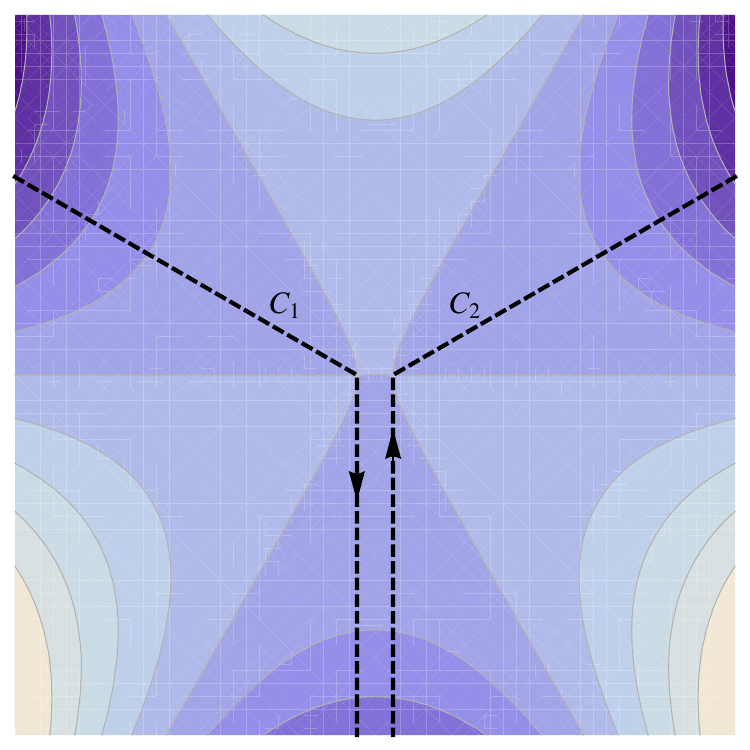}
\caption{For $z$ real negative one deforms the path $C=[-\infty,\infty]$ to the disjoint union of $C_1$ and $C_2$. The levels are those of the real part of the function $\Phi(s,z)$
where  $\Phi(s,z)=i\left(\frac{s^3}{3}+zs\right)$.}\label{fig1}
\end{center}
\end{figure}
It is then natural to ask what kind of algebraic object one obtains if one considers the quotient of a field $K$ of functions by the above equivalence relation (for fixed value of $\alpha$). By construction $G_\alpha$ is a subgroup of the multiplicative group $K^\times$ and thus the quotient $K/G_\alpha$ is a {\em hyperfield}. This implies in particular that having strong expansions for two functions does not uniquely determine a strong asymptotic expansion for their sum. We illustrate this conclusion by considering the expansion of the Airy function on
 the negative real axis. There, the function admits zeros and the expansion is more involved and usually written in the form
\begin{small}
\begin{equation*}
\ai(x)\sim  \frac{1}{2\pi^{3/2}}(-x)^{-1/4} \left(\cos\left(\frac{\pi }{4}+\frac{2 x\sqrt{-x} }{3}\right) \sum_{n\,{\rm even}}  \frac{\Gamma(n+\frac 56)
\Gamma(n+\frac 16)}{n!}(3/4)^n x^{-\frac{3n}{2}}\right. \end{equation*}
\begin{equation}\label{expandairy4}
   \left. -\sin\left(\frac{\pi }{4}+\frac{2 x\sqrt{-x} }{3}\right)\sum_{n\,{\rm odd}}   \frac{\Gamma(n+\frac 56)
\Gamma(n+\frac 16)}{n!}(3/4)^n(-1)^{(n-1)/2} (-x)^{-\frac{3n}{2}}\right).
\end{equation}
\end{small}
In this case we cannot expect that the ratio of the left hand side with a truncation of the right hand side belongs to $G_\alpha$ for some $\alpha>0$ (after changing variables to $x=-\frac 1T$) since the equivalence relation
preserves the zeros except for finitely many (since for $\alpha>0$ and $h\in G_\alpha$ one has $h(T)=0$ for only finitely many $T>0$ in a neighborhood of $T=0$). In fact, what the above asymptotic expansion suggests is that one can decompose the function $\ai(-\frac 1T)$ as a sum of two functions which  are equivalent, in the above strong sense, respectively to (with $x=-\frac 1T$)
$$
\frac{1}{2\pi^{3/2}}(-x)^{-1/4} \cos\left(\frac{\pi }{4}+\frac{2 x\sqrt{-x} }{3}\right) \sum_{n\,{\rm even}}  \frac{\Gamma(n+\frac 56)
\Gamma(n+\frac 16)}{n!}(3/4)^n x^{-\frac{3n}{2}}
$$
and
\begin{small}
$$
-\frac{1}{2\pi^{3/2}}(-x)^{-1/4}
\sin\left(\frac{\pi }{4}+\frac{2 x\sqrt{-x} }{3}\right)\sum_{n\,{\rm odd}}   \frac{\Gamma(n+\frac 56)
\Gamma(n+\frac 16)}{n!}(3/4)^n(-1)^{(n-1)/2} (-x)^{-\frac{3n}{2}}.
$$
\end{small}

To obtain the required decomposition of the function $\ai(x)$ one uses its definition as an oscillatory integral
\eqref{airy1}, \ie as an integral along a path slightly above the real axis, \ie of the form $C=[-\infty+i\epsilon ,\infty+i\epsilon]$ with $\epsilon >0$.
In order to obtain the decomposition  for $x=-\frac 1T$ real and negative, one deforms the path of integration $C$ in the complex domain to the disjoint union of two paths $C_1$ and $C_2$ as shown in Figure \ref{fig1}.
The integrals over the paths $C_j$ give complex conjugate numbers and this splitting as a sum $\int_C=\int_{C_1}+\int_{C_2}$ gives $\ai(x)=2\Re(\int_{C_2})$. In fact, the imaginary part $2\Im(\int_{C_2})$ gives the other Airy function $\bi(x)$ which is known to be a solution of the second order linear differential equation $y''-xy=0$. This function can be defined directly as the following oscillatory integral
\begin{equation}\label{airy2}
   \bi(x)=\frac{1}{\pi}\int_{0}^\infty \left(e^{-\left(\frac{s^3}{3}-xs\right)}+\sin\left(\frac{s^3}{3}+xs\right)\right)ds
\end{equation}
described by the two pieces of a path $C'_2$ going through the lower half of the imaginary axis and the right half of the real axis. $\bi(x)$ is characterized, among the solutions of $y''-xy=0$, by
$$
\bi(0)=\frac{1}{3^{1/6} \Gamma(\frac{2}{3})}\, , \ \
\bi'(0)=\frac{3^{1/6}}{\Gamma(\frac{1}{3})}\, .
$$
To obtain the required decomposition of $\ai(x)$ one uses the stationary phase method to evaluate $\int_{C_2}$ where $C_2$ goes through the critical point $\sqrt{-x}$ with an angle of $\pi/4$ with respect to the real axis, so that it follows the line of steepest descent. This shows that the argument of the complex number $\int_{C_2}=\frac 12(\ai(x)+i\bi(x))$ is close to $\alpha(x)=\frac{\pi }{4}+\frac{2 x\sqrt{-x} }{3}$ and thus one introduces the rotation matrix
$$
R(x)=\left[
\begin{array}{cc}
 \cos\,\alpha(x) & \sin\,\alpha(x) \\
 -\sin\,\alpha(x) & \cos\,\alpha(x)
\end{array}
\right]\,, \ \ \alpha(x)=\frac{\pi }{4}+\frac{2 x\sqrt{-x} }{3}
$$
which one applies to the column vector $\xi(x)$ with entries $(\ai(x),\bi(x))$. By using the inverse rotation matrix it follows that
$$
\ai(x)=\cos\,\alpha(x)\left(R(x)\xi(x)\right)_1-\sin\,\alpha(x)\left(R(x)\xi(x)\right)_2
=\ai_0(x)+\ai_1(x)
$$
It is exactly the decomposition of $\ai(x)$ as a sum of two terms $\ai_j(x)$ which gives the precise meaning to the asymptotic expansion. Indeed, the stationary phase method shows that
\begin{equation}\label{stat1}
2e^{-i\alpha(x)}\int_{C_2}\sim \frac{(-x)^{-1/4}}{2\pi^{3/2}}
 \sum_{n=0}^\infty   \frac{\Gamma(n+\frac 56)
\Gamma(n+\frac 16)}{n!}(3/4)^n x^{-\frac{3n}{2}}.
\end{equation}
 Thus since $e^{-i\alpha(x)}(\ai(x)+i\bi(x))=2e^{-i\alpha(x)}\int_{C_2}$, this shows that   one has
\begin{equation}\label{ai1}
\ai_0(x)\sim \frac{(-x)^{-1/4}}{2\pi^{3/2}} \cos\,\alpha(x)\sum_{n\,{\rm even}}  \frac{\Gamma(n+\frac 56)
\Gamma(n+\frac 16)}{n!}(3/4)^n x^{-\frac{3n}{2}}.
\end{equation}
It follows that for any $m>0$ the ratio of the left hand side with the right hand side truncated at $n\leq m$ is of the form $1+O(x^{-\frac{3m}{2}})$ for $x<0$, $x\to -\infty$ as above.
Similarly one shows that \begin{small}
\begin{equation}\label{ai2}
\ai_1(x)\sim -\frac{(-x)^{-1/4}}{2\pi^{3/2}}
\sin\,\alpha(x)\sum_{n\,{\rm odd}}   \frac{\Gamma(n+\frac 56)
\Gamma(n+\frac 16)}{n!}(3/4)^n(-1)^{(n-1)/2} (-x)^{-\frac{3n}{2}}
\end{equation}
\end{small}
in the above strong sense. Notice that the two equivalences \eqref{ai1} and \eqref{ai2} are stronger than the original one \eqref{expandairy4} for $\ai(x)$. In particular they determine {\em exactly} the positions of the zeros of $\ai_j(x)$ for $x<<0$ as the $x_n=-\frac{1}{4} 3^{2/3} (-\pi +4 n \pi )^{2/3}$for $\ai_0$ and $y_n=-\frac{1}{4} 3^{2/3} (\pi +4 n \pi )^{2/3}$for $\ai_1$. On the other hand the zeros of the Airy function are not given by an elementary formula. Moreover even the overall sizes of the two terms $\ai_j(x)$ are not the same since while $\ai_0(x)$ is of the order of $(-x)^{-1/4}$ the function $\ai_1(x)$ is of the order of $(-x)^{-7/4}$.

\begin{figure}
\begin{center}
\includegraphics[scale=0.8]{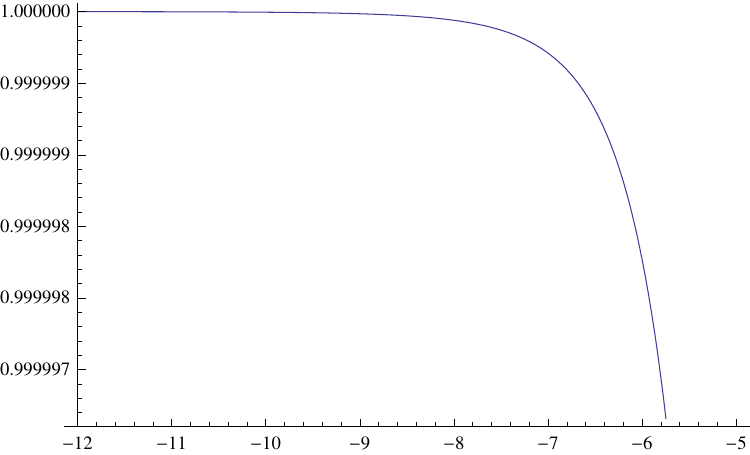}
\caption{The ratio of $\ai_1(x)$ with its approximation using the first $4$ terms of the asymptotic series}
\end{center}
\end{figure}

\subsection{Strong asymptotic expansion of $\int_{C_2}$}

We now work out the details of the stationary phase method, first for the asymptotic expansion of $\int_{C_2}=\frac 12(\ai(x)+i\bi(x))$ (when $x<0$). We  perform a change of variables in
\begin{equation}\label{airy1rep}
   \ai(x)=\frac{1}{2\pi}\int_{-\infty}^\infty e^{i\left(\frac{s^3}{3}+xs\right)}ds
\end{equation}
and let $x=-u^{\frac 23}$ with $u>0$ and $s=u^{\frac 13}t$. We then get
\begin{equation}\label{airy1rep1}
   \ai(x)=\frac{u^{\frac 13}}{2\pi}\int_{-\infty}^\infty e^{iu\left(\frac{t^3}{3}-t\right)}dt
\end{equation}
and we are looking for the expansion when $u\to +\infty$. After the above change of variables the two critical points correspond now to $t=\pm 1$. For the path $C_2$ we take the path, in the complex domain, through the critical point $t=1$ and such that the real part of $\frac{t^3}{3}-t$ remains constant (equal to $-\frac 23$) along the path. In this way the variation of the phase will only come from the term $dt$.
With $t=\xi +i\eta$ one has
\begin{equation*}
    \Re\left(\frac{t^3}{3}-t\right)=\frac{\xi^3}{3}-\xi\eta^2-\xi
\end{equation*}
and we take for $C_2$ the branch of the curve
\begin{equation*}
   \frac{\xi^3}{3}-\xi\eta^2-\xi+\frac 23=0
\end{equation*}
which is given by the formula, valid for $\xi>0$,
\begin{equation*}
    \eta=\frac{(-1+\xi ) \sqrt{2+\xi }}{\sqrt{3} \sqrt{\xi }}.
\end{equation*}

\begin{figure}
\begin{center}
\includegraphics[scale=0.5]{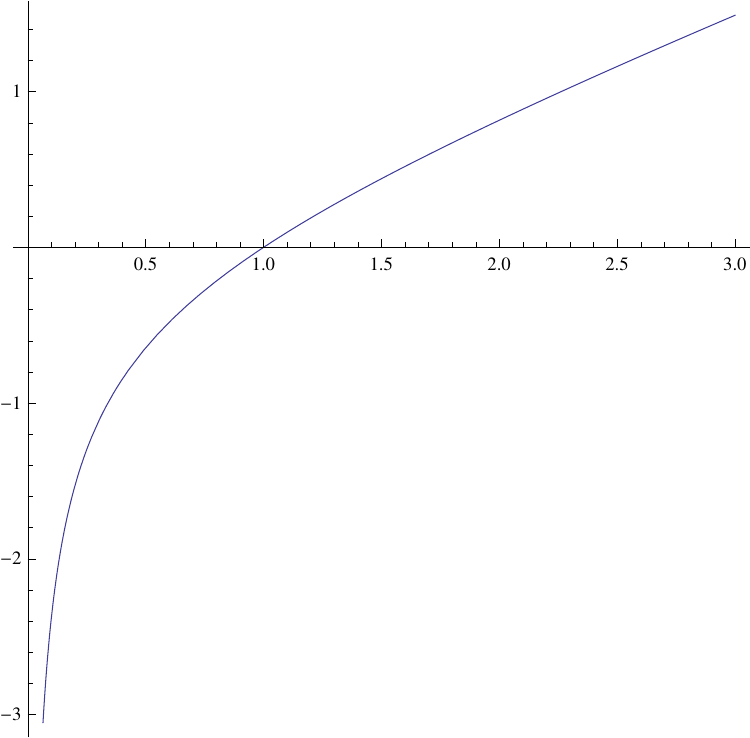}
\caption{The path $C_2$}
\end{center}
\end{figure}
Along the path $C_2$ one has the equality
\begin{equation}\label{constphase}
 i\left(\frac{t^3}{3}-t\right)=-\frac 23 i-\frac 12 w^2, \ \   w(\xi)=\frac{2 (-1+\xi ) (2+\xi )^{1/4} (1+2 \xi )}{3^{5/4} \xi ^{3/4}}
\end{equation}
and $w(\xi)$ varies from $-\infty$ (for $\xi=0$) to $+\infty$ for $\xi\to \infty$. One needs to compute $dt$ and one has
\begin{equation}\label{detadxi}
    d\eta/d\xi=\frac{1+\xi +\xi ^2}{\sqrt{3} \xi ^{3/2} \sqrt{2+\xi }}
\end{equation}
and
\begin{equation}\label{dwdxi}
    dw/d\xi=\frac{1+3 \xi ^2+2 \xi ^3}{3^{1/4} \xi ^{7/4} (2+\xi )^{3/4}}.
\end{equation}
For $\xi\to 0$ one has
\begin{equation}\label{equiv}
w\sim -(\frac 23)^{\frac 54}\xi^{-\frac 34}, \ \
d\eta/d\xi\sim 6^{-\frac 12}\xi^{-\frac 32}, \  \
dw/d\xi\sim 3^{-\frac 14} 2^{-\frac 34}\xi^{-\frac 74}
\end{equation}
so that
\begin{equation}\label{equiv0}
    d\xi/dw\sim c_1 |w|^{-\frac 74}, \ \ d\eta/dw\sim c_2 |w|^{\frac 14}
\end{equation}
For $\xi \to +\infty$ one has
\begin{equation}\label{equivinfty}
w\sim 4\times 3^{-\frac 54}\xi^{\frac 32}, \ \
d\eta/d\xi\sim 3^{-\frac 12}, \  \
dw/d\xi\sim 2\times 3^{-\frac 14} \xi^{\frac 12}
\end{equation}
so that
\begin{equation}\label{equiv2}
    d\xi/dw\sim c_3 |w|^{-\frac 13}, \ \ d\eta/dw\sim c_4 |w|^{-\frac 13}.
\end{equation}
We can now justify the asymptotic expansion of $\int_{C_2}$.
\begin{lem}\label{asexpc2} When $u\to +\infty$ one has
\begin{equation}\label{asexp}
 e^{-i( \frac \pi 4 -\frac 23 u)}   \int_{C_2}e^{iu\left(\frac{t^3}{3}-t\right)}dt\sim
  \frac{u^{-1/2}}{2\sqrt{\pi}}
 \sum_{n=0}^\infty  \frac{\Gamma(n+\frac 56)
\Gamma(n+\frac 16)}{n!}(\frac{3\, i}{4})^n u^{-n}
\end{equation}
\end{lem}
\proof Using \eqref{constphase} we have
\begin{equation}\label{gfunct0}
\int_{C_2}e^{iu\left(\frac{t^3}{3}-t\right)}dt= e^{-i\frac 23 u}   \int_{-\infty}^\infty e^{-u w^2/2}g(w)dw
\end{equation}
where the function $g(w)$ is given by
\begin{equation}\label{gfunct}
    g(w)=g_1(w)+ig_2(w)=d\xi/dw+i d\eta/dw
\end{equation}
where one expresses $\xi$ as a function $\xi(w)$ of $w\in \R$. The two functions $g_j(w)$ are smooth and $O(|w|^\ell)$ when $|w|\to \infty$ by \eqref{equiv0} and \eqref{equiv2}.
The Taylor expansion of $g(w)$ at $w=0$ is of the form
\begin{equation*}
   g(w)= \left(\frac{1}{2}+\frac{i}{2}\right)-\frac{i w}{6}-\left(\frac{5}{96}-\frac{5 i}{96}\right) w^2+\frac{w^3}{27}-\left(\frac{385}{27648}+\frac{385 i}{27648}\right) w^4+\frac{7 i w^5}{648}+\ldots
\end{equation*}
and the even part $\frac 12(g(w)+g(-w))$ takes the simpler form
\begin{equation*}
 \frac 12(g(w)+g(-w))=\frac{1+i}{2}\left( 1+\frac{5 i w^2}{48}-\frac{385 w^4}{13824}-\frac{17017 i w^6}{1990656}+\frac{1062347 w^8}{382205952}+\ldots\right)
\end{equation*}
In fact one has an equality of the form
\begin{equation}\label{gevendec}
    \frac 12(g(w)+g(-w))=e^{i\frac \pi 4}(h_0(w)+i h_1(w))
\end{equation}
where $h_0(w)$ is the real part of $e^{-i\frac \pi 4}\frac 12(g(w)+g(-w))$. By construction both $h_j$ are smooth even functions and $O(|w|^\ell)$ when $|w|\to \infty$. Moreover $h_j(w)=k_j(w^2)$ where again both $k_j$ are smooth, $k_0$ is even and $k_1$ is odd.

 Let
\begin{equation}\label{fjfunct}
    f_j(u):=\int_{-\infty}^\infty e^{-u w^2/2}h_j(w)dw
    =\int_0^\infty e^{-u v/2}k_j(v)\frac{dv}{\sqrt v}.
\end{equation}
The asymptotic expansion of $f_j(u)$ for $u\to \infty$ follows directly from the Taylor expansion of  $h_j(w)$ at $w=0$ (or of $k_j(v)$ at $v=0$, using \eg Watson's Lemma). It is given by the well known explicit formulas
\begin{equation}\label{f0}
    f_0(u)\sim
    \frac{u^{-1/2}}{2\sqrt{\pi}}
 \sum_{n\, \rm even} (-1)^{\frac n2}  \frac{\Gamma(n+\frac 56)
\Gamma(n+\frac 16)}{n!}(\frac{3\, }{4})^n u^{-n}
\end{equation}
and
\begin{equation}\label{f1}
    f_1(u)\sim
    \frac{u^{-1/2}}{2\sqrt{\pi}}
 \sum_{n\, \rm odd} (-1)^{\frac{n-1}{2}}  \frac{\Gamma(n+\frac 56)
\Gamma(n+\frac 16)}{n!}(\frac{3\, }{4})^n u^{-n}.
\end{equation}
Thus since by \eqref{gfunct0},
\begin{equation}\label{decompo}
  \int_{C_2}e^{iu\left(\frac{t^3}{3}-t\right)}dt= e^{i(\frac \pi 4-\frac 23 u)}(f_0(u)+if_1(u))
\end{equation}
one obtains \eqref{asexp}.\endproof

\begin{figure}
\begin{center}
\includegraphics[scale=0.7]{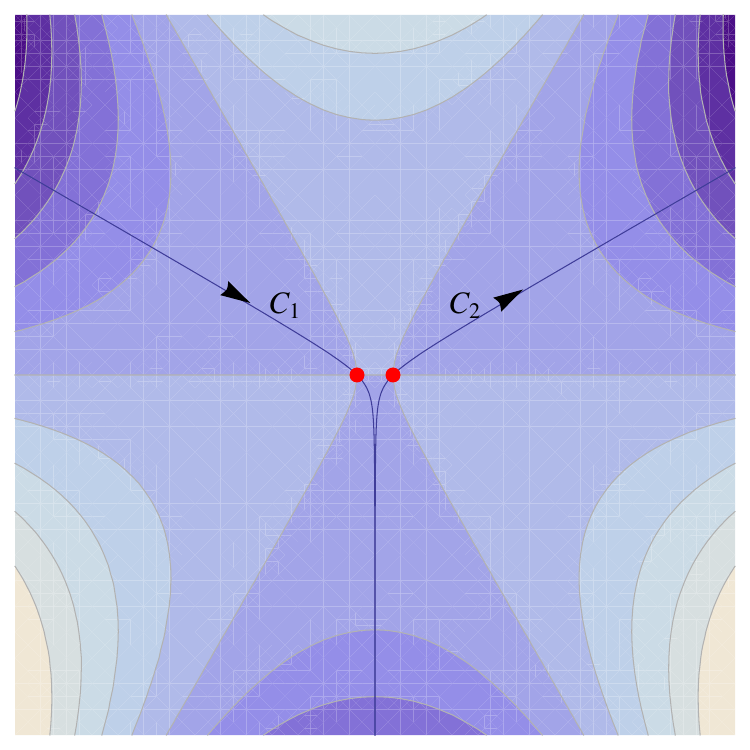}
\caption{The paths $C_j$}\label{fig4}
\end{center}
\end{figure}

The above asymptotic expansions hold in the classical sense as defined by Poincar\'e. We shall now see that when one passes to the real part an interesting phenomenon occurs. We first need to define more precisely the notion of {\em strong asymptotic expansion}.

Let us consider, for $\alpha>0$ the following  multiplicative subset of   functions of the variable $u$.
\begin{equation}\label{subgroupz}
G_\alpha=\{h\mid h(u)=1+O(u^{-\alpha})\ \ \text{for}\ u\to +\infty\}.
\end{equation}

\begin{defn}\label{defnstrong} Let $f(u)$, $t_n(u)$ be functions of the positive real variable $u$. The expansion $f(u)\sim \sum_1^\infty t_n(u)$ is called a {\em strong asymptotic expansion} when for any $\alpha>0$ there exists $n_\alpha$ such that
\begin{equation}\label{galp}
    f\in \left(\sum_1^n t_k\right)G_\alpha\qqq n\geq n_\alpha.
\end{equation}
\end{defn}

\begin{prop} \label{prophyperdec} For $x\in \R$, $x<0$, there exists a decomposition
\begin{equation}\label{hyperdec1}
    \ai(x)=\ai_0(x)+\ai_1(x)
\end{equation}
as a sum of two real analytic functions of $x$ with strong asymptotic expansions
\begin{equation}\label{ai1u}
   \ai_0(-u^{\frac 23})\sim  \frac{u^{-1/6}}{2\pi^{3/2}} \cos(\frac{\pi }{4}-\frac 23 u )\sum_{n\,{\rm even}}(-1)^{n/2}  \frac{\Gamma(n+\frac 56)
\Gamma(n+\frac 16)}{n!}(3/4)^n u^{-n}
\end{equation}
\begin{equation}\label{ai2u}
\ai_1(-u^{\frac 23})\sim -\frac{u^{-1/6}}{2\pi^{3/2}}
\sin(\frac{\pi }{4}-\frac 23 u )\sum_{n\,{\rm odd}}(-1)^{\frac{(n-1)}{2}}   \frac{\Gamma(n+\frac 56)
\Gamma(n+\frac 16)}{n!}(3/4)^n u^{-n}.
\end{equation}
\end{prop}
\proof One has, using \eqref{airy1rep1}, and deforming the path $(-\infty, \infty)+i\epsilon$ into the union of two paths $C_j$ as in Figure \ref{fig4}
\begin{equation}\label{dec1}
    \ai(-u^{\frac 23})=\frac{u^{\frac 13}}{2\pi}\sum_j\int_{C_j} e^{iu\left(\frac{t^3}{3}-t\right)}dt.
\end{equation}
The symmetry $s(t)=-\bar t$ transforms $C_2$ into $C_1$ but reverses the natural orientation. One has
\begin{equation*}
    iu\left(\frac{s(t)^3}{3}-s(t)\right)=\overline{\left(iu\left(\frac{t^3}{3}-t\right)\right)}
\end{equation*}
and thus the terms $\int_{C_j}$ in \eqref{dec1} are complex conjugate. With the notations of \eqref{fjfunct} we define
\begin{equation}\label{ai0def}
    \ai_0(x):=\frac{u^{\frac 13}}{\pi} \cos(\frac{\pi }{4}-\frac 23 u )f_0(u)\qqq x=-u^{\frac 23}
\end{equation}
and
\begin{equation}\label{ai1def}
    \ai_1(x):=\frac{u^{\frac 13}}{\pi} \sin(\frac{\pi }{4}-\frac 23 u )f_1(u)\qqq x=-u^{\frac 23}.
\end{equation}
By \eqref{dec1} one has
\begin{equation}\label{decomp1}
   \ai(-u^{\frac 23})=\frac{u^{\frac 13}}{\pi}\Re\left(\int_{C_2} e^{iu\left(\frac{t^3}{3}-t\right)}dt\right)
\end{equation}
and taking the real part of both sides of \eqref{decompo} one gets the decomposition \eqref{hyperdec1}. Using \eqref{f0} and \eqref{f1} one obtains the strong asymptotic expansions \eqref{ai1u} and \eqref{ai2u}. \endproof

\begin{prop} \label{propcomplexbb}
There exists an element $h$ of the algebra $B^+_{\C,0}=B^+_{\infty,0}\otimes_\R \C$ such that, for any $u>0$ one has
\begin{equation}\label{hyperdec2}
    \ai(-u^{\frac 23})=  u^{\frac 13}\Re\left(e^{i(\frac{\pi }{4}-\frac 23 u )}h(u)\right).
\end{equation}
\end{prop}
\proof Let $h(u)=\frac 1 \pi(f_0(u)+if_1(u))$, then by \eqref{decomp1} and \eqref{decompo} one has \eqref{hyperdec2}. It remains to show that each $f_j$ belongs to $B^+_{\infty,0}$.
By \eqref{fjfunct} one has
\begin{equation}\label{fjfunctbis}
    f_j(u)=\int_0^\infty e^{-u v/2}k_j(v)\frac{dv}{\sqrt v}
\end{equation}
where the function $k_j(v)$ is smooth and of polynomial growth at $\infty$. It follows that the measure $d\mu_j=k_j(v)\frac{dv}{\sqrt v}$ is a Radon measure such that
$\int_0^\infty e^{-\alpha v}|d\mu_j|<\infty$ for any $\alpha>0$ and one obtains
the conclusion using \eqref{fjfunctbis} and Definition \ref{Binf+}. \endproof

 \subsection{Source term and perturbative treatment of the Airy integral}

In this section we investigate what happens if we treat the Airy integral by introducing a source term and by performing perturbation theory around a Gaussian: this is a familiar method in the theory of Feynman integrals.
We consider the Airy integral in the form
$$
F(u)=u^{-\frac 13}\ai(-u^{\frac 23})=\frac{1}{2\pi}
\int_{-\infty}^\infty e^{iu\left(\frac{t^3}{3}-t\right)}dt
$$
and we introduce a source term
\begin{equation}\label{source0}
F(u,j)=\frac{1}{2\pi}
\int_{-\infty}^\infty e^{iu\left(\frac{t^3}{3}-t+ jt\right)}dt
\end{equation}
in  order to understand the relative roles of the variables $u$ and $j$. One has, for $t=
(1-j)^{1/2}s$,
$$
iu\left(\frac{t^3}{3}-t+ jt\right)=iu(1-j)^{3/2}\left(\frac{s^3}{3}-s\right)
$$
so that for $j<1$ one gets
\begin{equation}\label{source1}
F(u,j)=(1-j)^{1/2}F(u(1-j)^{3/2}).
\end{equation}
The formula \eqref{source1} gives us, for $j<1$, fixed the control of the behavior of the integral \eqref{source0} when $u\to \infty$. 

Next, we compare this with  the perturbative method around a critical point. We choose a critical point for the action without the source, we take $t=1$ and write $t=1+\phi$. We are then dealing with the exponent $iu\left(-\frac{2}{3}+j+j \phi +\phi ^2+\frac{\phi ^3}{3}\right)$, and thus with the integral
\begin{equation}\label{pert1}
F(u,j)=e^{iu (-\frac{2}{3}+j)}\frac{1}{2\pi}
\int_{-\infty}^\infty e^{iu\left(\phi ^2+\frac{\phi ^3}{3}+ j\phi\right)}d\phi.
\end{equation}
We now see how this integral is treated in the perturbative manner. One first introduces a coupling constant $g$ in front of the interaction term. The reason for doing that is to be able to proceed by integrating against a Gaussian. When $g=0$ the integral is Gaussian  and one then
expands around $g=0$ to obtain the result in general. Thus one deals with
 \begin{equation}\label{pert2}
F(u,j,g)=e^{iu (-\frac{2}{3}+j)}\frac{1}{2\pi}
\int_{-\infty}^\infty e^{iu\left(\phi ^2+g\frac{\phi ^3}{3}+ j\phi\right)}d\phi
\end{equation}
 so that $F(u,j)=F(u,j,1)$. One treats $g$ as small and one looks for an asymptotic expansion in powers of $g$. Since the interaction term is of higher order,  we get the equation $W_0={\rm Legendre}(S)$. The change of variables is that $u=\frac 1\hbar$, and the action $S(\phi)$ is given by
 \begin{equation}\label{sphi}
    S(\phi)=\phi ^2+g\frac{\phi ^3}{3}.
 \end{equation}
 One computes the Legendre transform of $S$ perturbatively. One has two solutions of the equation $\delta S/\delta \psi=-j$ which are given by
 \begin{equation*}
    \psi=\frac{-1\pm\sqrt{1-g j}}{g}
 \end{equation*}
and the solution which is selected by the perturbative expansion is
\begin{equation}\label{pertsol}
    \psi_+=\frac{-1+\sqrt{1-g j}}{g}=-\frac{j}{2}-\frac{g j^2}{8}-\frac{g^2 j^3}{16}-\frac{5 g^3 j^4}{128}+O(j)^5.
\end{equation}
One thus gets at the perturbative level
\begin{equation*}
    W_0(j)=S(\psi_+)+j\psi_+
\end{equation*}
and taking into account the term $e^{iu (-\frac{2}{3}+j)}$ one gets the following evaluation for the exponent
$$
iu\left( -\frac{2}{3}+j+S(\psi_+)+j\psi_+\right)=iu\left(-\frac{2}{3}+j-\frac{j^2}{4}-\frac{j^3}{24}+\frac{1}{64} \left(-2 g+g^2\right) j^4+O(j)^5\right)
$$
where in closed form one has
$$
S(\psi_+)+j\psi_+=\frac{\left(-1+\sqrt{1-g j}\right) \left(3 g^2 j+3 g \left(-1+\sqrt{1-g j}\right)+\left(-1+\sqrt{1-g j}\right)^2\right)}{3 g^3}.
$$
Taking $g=1$, the above expression simplifies and  one obtains
$$
-\frac{2}{3}+j+S(\psi_+)+j\psi_+=-\frac{2}{3} (1-j)^{3/2}
$$
which gives the exponent
$$
iu\left( -\frac{2}{3} (1-j)^{3/2}\right).
$$
This shows that the perturbative expansion corresponds to taking the integral over the path $C_2$ in the expression of the Airy function and gives a strong asymptotic expansion of this term but of course 
it completely ignores the contribution of $C_1$ which is nevertheless essential.

Let us now look at the non-perturbative behavior of the functional integral as a function of the source $j$. One uses the usual normalization which amounts to divide by the value at $j=0$ and thus we consider
\begin{equation}\label{truefunct}
    \left(\frac{1}{2\pi}
\int_{-\infty}^\infty e^{iu\left(\frac{t^3}{3}-t+ jt\right)}dt\right)/
 \left(\frac{1}{2\pi}
\int_{-\infty}^\infty e^{iu\left(\frac{t^3}{3}-t\right)}dt\right)=F(u,j)/F(u).
\end{equation}
One has, using  \eqref{source1}, and for $j<1$
\begin{equation}\label{truefunct1}
F(u,j)/F(u)=(1-j)^{1/2}F(u(1-j)^{3/2})/F(u)=\frac{\ai(-(1-j)u^{\frac 23})}{\ai(-u^{\frac 23})}.
\end{equation}
Since the denominator has many zeros which do not correspond to zeros of the numerator one obtains a function which oscillates wildly between the poles
as shown in Figure \ref{airysin}.

\begin{figure}
\begin{center}
\includegraphics[scale=0.8]{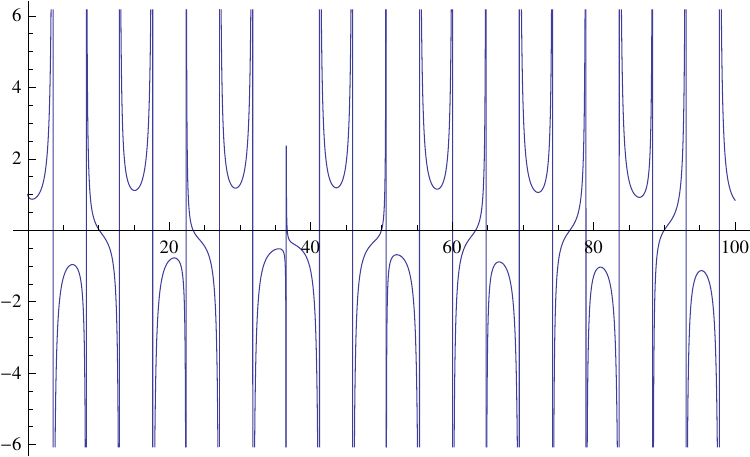}
\caption{The graph of $\frac{\ai(-(1-j)u^{\frac 23})}{\ai(-u^{\frac 23})}$
for $j=\frac 12$.}\label{airysin}
\end{center}
\end{figure}
We can thus summarize the treatment of the integral with a source \eqref{source0} as follows 
\begin{equation}\label{source0bis}
F(u,j)=\frac{1}{2\pi}
\int_{-\infty}^\infty e^{iu\left(\frac{t^3}{3}-t+ jt\right)}dt
\end{equation}
\begin{enumerate}
  \item The quotient $F(u,j)/F(u,0)$ has infinitely many poles in $u$ when $j\neq 0$ and its logarithm does not make sense.
  \item This behavior is due to the presence of two critical points which each contribute by a wave without any coherence between the two waves.
  \item The perturbative treatment chooses one of the critical points and makes an expansion of the contribution of this critical point, thus giving this term up to strong equivalence.
\item The presence of several critical points forces one to add the contributions of each critical point and replaces the exact knowledge of each of these terms up to strong equivalence by a hypersum in the  quotient hyperfield. 
\end{enumerate}
 Thus one can conclude that the way the perturbative method disregards the problem of the hypersum relies in the fact that it does not compute the full integral but only a portion of it corresponding to the choice of a critical point. Clearly, a complete understanding of the process requires  to consider the full integral by add up the various contributions, therefore  the appearance of the hypersum cannot be avoided. In the context of gauge theories in physics, the presence of several critical points is unavoidable and for this reason we expect that the formalism deployed by the theory of hyper-structures (hyperrings and hyperfields) might shed some light on the evaluation of Feyman integrals in that context.

\subsection{A toy model: the hyperfield $\cfl$}

The choice of a critical point in the asymptotic expansions which one performs in quantum physics to interpret the result in a classical manner is guided by the Wick rotation whose effect is to replace an integral of imaginary exponentials by an integral of real exponentials. This process is justified in quantum field theory where one then rotates back by analytic continuation from the Euclidean formulation to the Minkowski space physical description. This suggests to select the critical points using the following total ordering on $\C$.
Let $\C_+\subset \C$ be defined by
\begin{equation}\label{cplus}
    \C_+=\{z\in \C\mid \Re(z)\geq 0\; {\rm and}\, \Im(z)\geq 0\; {\rm if} \; \Re(z)=0\}
\end{equation}
We write $z\leq z'$ for $z'-z\in \C_+$. This defines a total order relation compatible with addition. 

\subsubsection{The hyperfield $\cfl$}

As a set  $\cfl=(\C\coprod\C)\cup\{0\}$ is the union of two copies of $\C$ and $\{0\}$. We write its non-zero elements as $\epsilon\, e(z)$ where $\epsilon=\pm 1$ and $z\in \C$. The multiplicative structure  is defined by 
$
\epsilon e(z)\cdot \epsilon' e(z'):=
\epsilon \epsilon' e(z+z')
$ and the additive structure is given by
\begin{equation}\label{hyperhp}
    \epsilon\, e(z)+ \epsilon'\, e(z')=
    \left\{
                 \begin{array}{ll}
                  \epsilon\, e(z), & \hbox{if $z'<z$;} \\
                   \epsilon'\, e(z'), & \hbox{if $z<z'$;} \\
                  \epsilon\, e(z)& \hbox{if $z=z'$, $\epsilon=\epsilon'$;}\\
  \{0\}\cup\{\epsilon''e(z'')\mid z''\leq z\}, & \hbox{if $z=z'$, $\epsilon=-\epsilon'$.}
                 \end{array}
               \right.
\end{equation}
One checks that these laws define a hyperfield structure on $\cfl$. Moreover as in the real case one obtains

\begin{prop}\label{perfprop}
The hyperfield $\cfl$ is perfect of characteristic one.
\end{prop}

\subsubsection{Description of $\cfl$ as perfection of $\C$}

 We shall use an analogous formula as in the real case where we take $\kappa=\frac 13$ in Theorem \ref{limitcase}. The only difference is that in the complex case we require that the sequence $(x^{(j)})_{j\in \Z}$ of complex numbers $x^{(j)}\in \C$, is doubly infinite and  {\em convergent} when $j\to -\infty$. This nuance  makes no difference in the real case since any sequence $(x^{(j)})_{j\geq 0}$ of real numbers such that $x^{(j+1)} = (x^{(j)})^3\qqq j$, uniquely extends to 
a doubly infinite sequence $(x^{(j)})_{j\in \Z}$ fulfilling $x^{(j+1)} = (x^{(j)})^3\qqq j$, and the obtained sequence is automatically convergent when $j\to -\infty$ (and its limit belongs to $\{-1,0,1\}$). 

\begin{thm}\label{nicecflat}
One has a canonical isomorphism of $\cfl$ with doubly infinite sequences $x^{(j)}\in \C$ as follows
\begin{equation}\label{projfon1}
\cfl\stackrel{\sqrt{}}{\to}\{ (x^{(j)})_{j\in \Z},\,{\rm convergent\  for }\ j\to -\infty \mid x^{(j+1)} = (x^{(j)})^3\qqq j\in\Z\}
\end{equation}
The map $\sqrt{}$ associates to $0$ the sequence $x^{(j)}=0$ and to $x=\epsilon\, e(z)\in \cfl$ the sequence
$
  x^{(j)}=\epsilon\; e^{3^j z}
$.
\end{thm}
\proof The map $\sqrt{}$ which associates to $x=\epsilon\, e(z)\in \cfl$ the sequence
$
  x^{(j)}=\epsilon\; e^{3^j\,  z}
$
is well defined since $(x^{(j)})^3 = x^{(j+1)}$ as $3$ is odd, while
$x^{(j)}\to \epsilon$ as $j\to -\infty$. We show that the map $\sqrt{}$ is bijective. It is injective because the sequence $
  x^{(j)}=\epsilon\; e^{3^j\,z}
$ determines both $\epsilon$ and $z$ by the equalities
\begin{equation*}
    \epsilon=\lim_{j\to -\infty} x^{(j)}\,, \ \ z=\lim_{j\to -\infty}3^{-j}\left(\epsilon \,
    x^{(j)}-1\right).
\end{equation*}
Let now $x^{(j)}$ be a doubly infinite sequence of complex numbers as in \eqref{projfon1}. If $x^{(0)}=0$ then all $x^{(j)}$ are $0$. Assume that $x^{(0)}\neq 0$ and let $\mu=|x^{(0)}|$. Then one has $|x^{(-j)}|=\mu^{1/3^j}\to 1$ when $j\to \infty$. The  limit $\epsilon=\lim_{j\to -\infty} x^{(j)}$ fulfills $\epsilon\neq 0$ and
\begin{equation*}
    \epsilon^3=\lim (x^{(j-1)})^3=\lim x^{(j)}=\epsilon
\end{equation*}
so that $\epsilon=\pm 1$. Replacing $x^{(j)}$ by $-x^{(j)}$ we can assume that $\epsilon=1$ \ie that $x^{(j)}\to 1$ when $j\to- \infty$. Since $x^{(j)}\to 1$ one has $|x^{(j)}-1|<1$ for $j\leq j_0$ and thus
\begin{equation}\label{jplusone}
    x^{(j)}=e^{z_0/3^{j_0-j}}\qqq j\leq j_0, \ \ z_0=\log(x^{(j_0)})
\end{equation}
where $\log(x^{(j_0)})$ is defined by the convergent series
\begin{equation*}
    \log(x^{(j_0)})=-\sum_1^\infty \frac{(1-x^{(j_0)})^n}{n}.
\end{equation*}
Thus one gets, with $z=3^{-j_0} z_0$ the equality
$
    x^{(j)}=e^{3^{j}\,z}\qqq j,
$
and hence the surjectivity of $\sqrt{}$.\endproof

It is important to have an explicit formula for the natural extension of the sequence $x^{(j)}$ to a continuous, one parameter family $x(t)\in \C$, $t\in \R$.
\begin{cor}\label{extend}
Let $x^{(j)}$ be a non-zero  sequence of complex numbers such that $ (x^{(j)})^3 = x^{(j+1)}$ for all $j\in \Z$
and which is convergent for $j\to -\infty$.
There exists a unique continuous one parameter family $x(t)\in \C$, $t\in \R$ such that
\begin{itemize}
  \item $x(3^{j})=x^{(j)}\qqq j\in \Z$.
  \item $x(kt)=x(t)^k$ for all odd $k\in \Z$.
\end{itemize}
Let $\epsilon=\lim_{j\to -\infty} x^{(j)}$. Then one has for any $t>0$
\begin{equation}\label{xt}
   x(t)=\epsilon\,\prod (\epsilon \,x^{(j)})^{a_j}\qqq a_j\in \{0,1,2\}, \ \sum a_j\,3^{j}=t.
\end{equation}
\end{cor}
\proof The existence of the $x(t)$ follows from Theorem \ref{nicecflat}. Its uniqueness follows from the density in $\R$ of the $a\, 3^{-k}$ where $a\in \Z$ is odd. To prove \eqref{xt} one can assume that
$\epsilon=\lim x^{(j)}$ is $1$. One then has $x(t)=e^{zt}$ for some $z\in \C$ and \eqref{xt} follows. Note that the infinite product is absolutely convergent since
$
    \sum_{j\leq 0} |\epsilon\,x^{(j)}-1|<\infty.
$
\endproof
The product of two convergent sequences is convergent and thus
it is immediate to get the product of two elements of $\cfl$ from their representation as doubly infinite sequences
\begin{equation}\label{producth}
   (x^{(j)})_{j\in \Z}. (y^{(j)})_{j\in \Z}=(x^{(j)}y^{(j)})_{j\in \Z}.
\end{equation}
For the addition, the natural formula to try  is (as in the $p$-adic and real cases)
\begin{equation}\label{padicbis}
(x+y)^{(i)}= \lim_{j \ra \infty} (x^{(i+j)}+y^{(i+j)})^{3^{-j}}
\end{equation}
however, one needs to handle here the ambiguity in the extraction of roots of order a power of $3$.
When $|x^{(0)}|>|y^{(0)}|$ this is easily done since, for $j\geq 0$
\begin{equation*}
    x^{(i+j)}+y^{(i+j)}=x^{(i+j)}\left(1+(y^{(i)}/x^{(i)})^{3^{j}}\right)
\end{equation*}
while for $j\to \infty$ one has, since  $|y^{(i)}/x^{(i)}|<1$
\begin{equation}\label{naturalguess}
  \left(1+(y^{(i)}/x^{(i)})^{3^{j}}\right)^{3^{-j}}\to 1
\end{equation}
using the unique extraction of roots in a neighborhood of $1$. Thus this gives
\begin{equation}\label{padicter}
(x+y)^{(i)}= x^{(i)} \ \ \text{if} \ |x^{(0)}|>|y^{(0)}|.
\end{equation}
What is new in the complex case  is that in the case when $|x^{(0)}|=|y^{(0)}|$ (\ie when the two sequences have the same modulus) it is the behavior of $x(t) + y(t)$  on the {\em imaginary} axis (\ie for $it\to +\infty$) which allows one to get the hypersum. The required analytic continuation in the parameter $t$ connects with the Wick rotation of quantum physics. Note that there is nevertheless also a direct manner  to decide, assuming $y\neq -x$, which between $x$ and $y$ is the hypersum $x+y$, this is achieved by considering the behavior of the sequences $(x^{(j)})_{j\geq 0}$ and $ (y^{(j)})_{j\geq 0}$  for  $j\to -\infty$. When $|x|\neq |y|$ it is the sequence of largest modulus. When $|x|= |y|$ one considers the sequence
\begin{equation*}
    u(j)=x^{(j)}/y^{(j)}, \ \ \ j\to -\infty,
\end{equation*}
of complex numbers of modulus $1$ and the hypersum is $x$ (resp. $y$) when the sequence rotates in a clockwise (resp. anticlockwise) manner for $j\to -\infty$.

  \subsubsection{$\C$ as the quotient of $\cfl$ by the Euler relation $e^{i\pi}=-1$.}

  We show that $\cfl$ appears naturally as the perfection of the  hyperfield $\troc$ of Viro tropical complex numbers (\cf \cite{V}). The multiplicative structure of $\troc$ is the same as for ordinary complex numbers and we recall the definition of the hypersum $a\smile b$ in the case of Viro tropical complex numbers. One sets
 \begin{enumerate}
   \item If $|a|<|b|$:~~$a\smile b=b$;~ if $|a|>|b|$:~~ $a\smile b=a$.
   \item If $|a|=|b|$ and $a\neq -b$, with $a = re^{i\alpha}, b= re^{i\beta}$ and $|\alpha-\beta|<\pi$
       \begin{equation*}
     a\smile b=  \{re^{i\varphi} |  \, |\alpha-\varphi| + |\varphi -\beta| = |\alpha-\beta|\}
       \end{equation*}
   \item If $a+b=0$:~~  $a\smile b$ is the closed disk $\{c\in \C\mid |c|\leq |a|\}$.
 \end{enumerate}
The hyperfield $\troc$ is not perfect. This conclusion is  obvious since the map $x\mapsto x^n$ is not bijective say for $n=3$.

 \begin{prop}\label{mapexp}
 $(i)$~The following map ${\rm ev}: \cfl\to \troc $ is a hyperfield homomorphism
 \begin{equation}\label{exp}
    {\rm ev}(\epsilon e(z)):=\epsilon e^z\qqq  \epsilon \in \{\pm 1\}, \ z \in \C,\ \  {\rm ev}(0)=0.
\end{equation}
Moreover (with the notations of Theorem \ref{nicecflat}) one has ${\rm ev}(x)=x^{(0)}$ for all $x\in \cfl$.

$(ii)$~The hyperfield homomorphism ${\rm ev}$ is surjective and at the level of the multiplicative groups one has the exact sequence
\begin{equation}\label{exsequ}
  1\to  (-e(i\pi))^\Z\to \cfl^\times\stackrel{{\rm ev}}{\to} \C^\times \to 1.
\end{equation}
 \end{prop}
 \proof The map ${\rm ev}: \cfl\to \troc $ is multiplicative, we need to check that it is compatible with the hyperaddition. 
 Let $x=\epsilon e(z)$, $x'=\epsilon e(z')$, we show that
 \begin{equation}\label{addev}
    {\rm ev}(x+x')\subset {\rm ev}(x)\smile {\rm ev}(x').
 \end{equation}
 Assume first that $\Re(z)<\Re(z')$. Then one has $z'-z\in \C_+$ and thus
  $x+x'=x'$ by \eqref{hyperhp}. One has $|{\rm ev}(x)|=e^{\Re(z)}$ and thus
$|{\rm ev}(x)|<|{\rm ev}(x')|$ so that $ {\rm ev}(x)\smile {\rm ev}(x')={\rm ev}(x')$.
This shows that \eqref{addev} holds when $\Re(z)\neq\Re(z')$. Assume now that
$\Re(z)=\Re(z')$. Then $|{\rm ev}(x)|=|{\rm ev}(x')|$ and the definition of the hypersum
$\smile$ shows that in this case $ {\rm ev}(x)\smile {\rm ev}(x')\supset\{{\rm ev}(x),{\rm ev}(x')\}$. This shows, using \eqref{hyperhp}, that \eqref{addev} holds when $x'\neq - x$. Assume now that $x'=-x$. Then ${\rm ev}(x)=- {\rm ev}(x')$ and
${\rm ev}(x)\smile {\rm ev}(x')$ is the closed disk $\{c\in \C\mid |c|\leq |{\rm ev}(x)|\}$.
With $x=\epsilon \,e(z)$ one has
\begin{equation*}
x+x'=\{0\}\cup\{\epsilon''e(z'')\mid z''\leq_P z\}.
\end{equation*}
But $z''\leq_P z$ implies $\Re(z'')\leq \Re(z)$ and hence
$|{\rm ev}(\epsilon''e(z''))|=e^{\Re(z'')}\leq e^{\Re(z)}=|{\rm ev}(x)|$  so that
 ${\rm ev}(\epsilon''e(z''))$ belongs to the closed disk $\{c\in \C\mid |c|\leq |{\rm ev}(x)|\}$. We thus get \eqref{addev} in this case also and this shows that the map ${\rm ev}$ is a hyperfield homomorphism. \newline
For the second statement note that the map ${\rm ev}$ is a group homomorphism $\cfl^\times\stackrel{{\rm ev}}{\to} \C^\times$ and its kernel is the cyclic group generated by the element $-e(i\pi)\in \cfl$.\endproof

\subsubsection{Universal $W$-model of $\cfl$}

 The  hyperfield $\cfl$ admits a universal $W$-model. Given  finitely many elements $z_j\in \C$ we denote by $\vee z_j$ the unique largest element for the  total order associated to $ \C_+$.
 The following formula defines a homomorphism $\rho$ from the group ring $R=\Q[\C]$ to $\cfl$
\begin{equation}\label{toctr}
    \rho(\sum_1^n a_j\epsilon_j u(z_j)):=\epsilon_k \, e(z_k)\, , \ z_k=\vee z_j
\end{equation}
which extends to a homomorphism of hyperfields from the field $K={\rm Frac}(R)$ to $\cfl$.

\begin{thm}\label{thmunic} The triple $(W=K,\rho, \tau_W)$ is the universal $W$-model for $H=\cfl$. The homomorphism $\rho$ induces an isomorphism of hyperfields  $W/G\stackrel{\sim}{\to} \cfl$, where $G = \ker(\rho: W^\times \to \cfl^\times)$.
\end{thm}
\proof The proof is similar to the proof of Theorem \ref{thmuni}  and is left to the reader.\endproof

\subsubsection{The map $\theta_\C$ and the ring $\C_\infty$}
We proceed as in the real case and construct the universal formal pro-infinitesimal thickening of the field $\C$.
Theorem~\ref{thmunic} gives not only the field $W(\cfl)$ but also the subalgebra
$W_\Q(\cfl)$  generated by the \te lifts $[x]$ for $x\in \cfl$. 
\begin{prop}\label{propthetac} There exists a unique ring homomorphism $\theta_\C: W_\Q(\cfl)\to \C$ such that ($[~]=\tau$)
\begin{equation}\label{22c}
\theta_\C([x]) =\theta_\C(\tau(x))= {\rm ev} (x),\quad\forall x\in \cfl.
\end{equation}
\end{prop}
\begin{proof} By construction $W_\Q(\cfl)$ is the subalgebra (over $\Q$) generated by the \te lifts $[x]$ for $x\in \cfl$. With $x=\epsilon e(z)$ one has $[x]=\epsilon \,u(z)$ and thus one gets that $W_\Q(\cfl)= \Q[\C]$. Thus the natural map $u(z)\mapsto e^z$ extends by linearity and uniquely to a ring homomorphism
\begin{equation}\label{23c}
\theta_\C(\sum_ia_i[x_i]) = \sum_ia_i{\rm ev}(x_i)\in\C.
\end{equation}
\end{proof}

As in the real case, this suggests to consider
the homomorphism  $\theta_\C: W_\Q(\cfl)\to \C$ of Proposition~\ref{propthetac} and introduce the following
  \begin{defn} The universal formal pro-infinitesimal thickening $\C_\infty$ of $\C$ is the $\Ker(\theta_\C)$-adic completion of $W_\Q(\cfl)$, \ie \[\C_\infty=\varprojlim_n W_\Q(\cfl)/\Ker(\theta_\C)^n.\]
  \end{defn}

  We shall now proceed as  in the real case to show that $\Ker(\theta_\C)/\Ker(\theta_\C)^2$ is an infinite dimensional complex vector space. Our main goal will be that to construct explicitly
a two dimensional complex space of linear forms on $\Ker(\theta_\C)/\Ker(\theta_\C)^2$.
We introduce the following vector spaces over $\C$
\begin{defn}\label{homcc} We let $\Hom_\Z(\C,\C)$ be the complex vector space of all {\em additive} maps $L:\C\to \C$, and $\Hom_\R(\C,\C)\subset\Hom_\Z(\C,\C)$ the two dimensional subspace of $\R$-linear maps.
\end{defn}
One has by definition
\begin{equation*}
    (a\phi+b\psi)(z):=a\phi(z)+b\psi(z)\in \C\qqq a,b,z\in \C\,, \ \phi,\psi\in \Hom_\Z(\C,\C).
\end{equation*}
Note that $\Hom_\R(\C,\C)\subset\Hom_\Z(\C,\C)$ is also the subspace of additive maps which are measurable and that it is only by the virtue of the axiom of choice that $\Hom_\Z(\C,\C)$ is infinite dimensional, while only the elements of the subspace $\Hom_\R(\C,\C)$ can be concretely exhibited. We write the elements of $\Hom_\R(\C,\C)$ in the form
\begin{equation}\label{hom}
    L(z)=az+b\bar z\qqq z\in \C
\end{equation}
so that $(a,b)$ are the natural coordinates in this complex vector space.
\begin{lem}\label{tayl}
$(i)$~Let $\ell\in \Hom_\Z(\C,\C)$,  then the  map
 \begin{equation}\label{taulc}
    \cT_\ell(X)(u):=\sum_ia_i e^{z_i+ u\ell(z_i)}\qqq X=\sum_ia_iu(z_i)\in W_\Q(\cfl)
\end{equation}
defines a ring homomorphism $\cT_\ell:W_\Q(\cfl)\to \cE$ to the ring of entire  functions
of the variable $u\in \C$ and
\begin{equation}\label{valatval}
   \theta_\C(X)=\cT_\ell(X)(0),~\forall X\in W_\Q(\cfl).
\end{equation}
$(ii)$~Let $\ell\in \Hom_\Z(\C,\C)$. Then  the following defines a linear form on $\Ker(\theta_\C)/\Ker(\theta_\C)^2$
\begin{equation}\label{ll}
\Ker(\theta_\C) \ni X\mapsto \delta_\ell(X)= \left(\frac{d}{du} \cT_\ell(X)(u)\right)_{u=0}
\end{equation}
\end{lem}
\proof $(i)$~For each $u\in \C$ the map $z\mapsto e^{z+ u\ell(z)}$ is a group homomorphism from the additive group $\C$ to the multiplicative group $\C^\times$. Thus this map extends to a ring homomorphism from the group ring $W_\Q(\cfl)= \Q[\C]$ to $\C$. This shows that $\cT_\ell$ is a homomorphism to the algebra of functions with pointwise operations. Since $\cT_\ell(X)$ is a finite linear combination of exponential functions of $u$ it is an entire function. One checks \eqref{valatval} using the definition \eqref{22c} of $\theta_\C$.\newline
$(ii)$~First the right hand side of \eqref{ll} vanishes when $X\in \Ker(\theta_\C)^2$ since the entire function $\cT_\ell(X)(u)$ admits a zero of order at least two at $u=0$ as can be seen  using \eqref{valatval}. This shows that $\delta_\ell$ is well defined. It is clearly additive. Let us show that it is $\C$-linear. For the structure of complex vector space on $W_\Q(\cfl)/\Ker(\theta_\C)=\C$,  the multiplication by a complex number $y\in \C$ is provided by the multiplication by any $s\in  W_\Q(\cfl)$ such that $\theta_\C(s)=y$.  We then have, with $X\in \Ker(\theta_\C)$, the expansion at $u=0$
\begin{equation*}
  \cT_\ell(sX)(u)=\cT_\ell(s)(u)\cT_\ell(X)(u)=\theta_\C(s)\delta_\ell(X)u+O(u^2).
\end{equation*}
This shows that $\delta_\ell$ is $\C$-linear.
\endproof

\subsubsection{The periods $\epsilon$ and $\pi_p$}

As in Theorem \ref{thmkerth} one can use  Lemma \ref{tayl} to show that the ``periods" of the form $\pi_p=[e(\log p)]-p$ are linearly independent elements of $\Ker(\theta_\C)/\Ker(\theta_\C)^2$. Next, we construct another ``period" which is purely complex. We start with the analogue of the element $\epsilon\in F(\C_p)$ of the $p$-adic Hodge theory. We define in our case
\begin{equation}\label{defeps}
    \varepsilon:= e(2i\pi)\in \cfl.
\end{equation}
The natural square root $\varepsilon^{(2)}$ of $\varepsilon$ is $\varepsilon^{(2)}= e(i\pi)\in \cfl$ and
one has
\begin{equation}\label{epseps}
    \theta_\C([\varepsilon])=1, \ \ \omega\in \Ker(\theta_\C), \ \omega=([\eps]-1)/([\varepsilon^{(2)}]-1).
\end{equation}
The last part follows from $\omega=1+[\varepsilon^{(2)}]$ and the fact that $e^{i\pi}=-1$. Now that we have these various ``periods" we can evaluate on them the natural linear forms $\delta_\ell$ on $\Ker(\theta_\C)/\Ker(\theta_\C)^2$ given by elements of $\Hom_\R(\C,\C)$. 
\begin{lem}\label{tayl2}
For $L\in \Hom_\R(\C,\C)$ given by \eqref{hom} one has
\begin{equation}\label{ll1}
    \delta_L(\pi_p)=(a+b)p\log(p),\ \  \delta_L(\omega)=i\pi(b-a).
\end{equation}
\end{lem}
\proof 
Let  $\ell=L$ with $L$ given by \eqref{hom}. One has
\begin{equation*}
   \delta_L(\pi_p)=\left(\frac{d}{du}\right)_{u=0} \cT_\ell([e(\log p)]-p)(u)
   =\left(\frac{d}{du}\right)_{u=0}e^{\log p+ u L(\log p)}=pL(\log p)
\end{equation*}
which gives $(a+b)p\log(p)$ by \eqref{hom}. Similarly
\begin{equation*}
   \delta_L(\omega)=\left(\frac{d}{du}\right)_{u=0}  \cT_\ell(1+[\varepsilon^{(2)}])(u)
   =\left(\frac{d}{du}\right)_{u=0} e^{i\pi+ u L(i\pi)}=-L(i\pi)
\end{equation*}
which gives $i\pi(b-a)$ by \eqref{hom}.\endproof

  \newpage

\appendix

\section{Table of correspondences with $p$-adic Hodge theory}\label{tabrec}

The following table reports the archimedean structures that we have defined and discussed in this paper and their $p$-adic counterparts (\cf \cite{FF1})
\vspace{.5in}

\begin{center}
  \begin{tabular}{|c|c|}
  \hline &\\
    $p$-{\bf adic case} & {\bf Archimedean case} \\
    &\\
     \hline
       &\\
    $\F_p$ & $\sign=\{-1,0,1\}$\quad hyperfield of signs \\
    &\\
    \hline
       &\\
    $F=F(\C_p)$ & $F(\R)=\trop \subset \cfl =F(\C)$ \\
    &\\
    \hline &\\
$\epsilon\in F(\C_p)$ & $ \varepsilon:= e(2i\pi)\in \cfl =F(\C)$ \\
    &\\
    \hline &\\
        $\etplus=\displaystyle{\varprojlim_{x\mapsto x^p}}\ \OO_{\C_p}$ & $\cO=\displaystyle{\varprojlim_{x\mapsto x^\kappa}} \ [-1,1]\subset \displaystyle{\varprojlim_{z\mapsto z^\kappa}} \ \{z, \vert z\vert \leq 1\}$ \\&\\
        \hline     &\\
    $ \wittOovpi=\wittO[1/\pi]
  $ & $\btplusarc=\{f(z)=\int_0^\infty e^{-\xi z} d\mu(\xi)\mid \mu$ finite real measure$\}$  \\
    &\\
    \hline
     &\\
       $x=\displaystyle{\sum_{n\gg-\infty}}[x_n]\pi^n\in\wittOovpi$& $f=\int_{s_0}^\infty [f_s]e^{-s}ds\in\btplusarc$, $f_s\smile f_t=f_s$ for $s\leq t$ \\&\\
    \hline
    &\\
    $\wittO =\{x\in\wittOovpi|~x=\displaystyle{\sum_{n\geq 0}}[x_n]\pi^n\}$ &   $\{f\in \btplusarc\mid \|f\|_0\leq 1\}=\{f\in \btplusarc\mid f=\int_{0}^\infty [f_s]e^{-s}ds\}$\\
    &\\
    \hline
     &\\
    $\theta: \wittOovpi \ra \Cp$ & $\theta: \btplusarc \ra \R$, $\   \   \theta_\C: W_\Q(\cfl)\to \C$ \\
    &\\
    \hline
    &\\
    $ \varphi(\displaystyle{\sum_{n\gg-\infty}}[x_n]\pi^n)=\displaystyle{\sum_{n\gg-\infty}}[x_n^q]\pi^n$&
    $\arf_\lambda(\int_{s_0}^\infty [f_s]e^{-s}ds)=\int_{s_0}^\infty [f_s^\lambda]e^{-s}ds$\\
    &\\
        \hline
         &\\
        $|x|_\rho^\alpha=\displaystyle{\max_\Z} |x_n|^\alpha q^{-n}$&  $\|f\|_\rho=\int_{s_0}^\infty |f_s|^\alpha e^{-s}ds$            \\
         &\\
        \hline
  \end{tabular}
  \end{center}\bigskip

\newpage

\section{Universal perfection}\label{uniperf}

In this appendix we give a short overview of the well-known construction of universal perfection in number theory: we refer to \cite{F1}, Chapter V \S 1.4; \cite{W}, \cite{F2} \S 2.1;  \cite{FF1}, \S 2.4 for more details.

The universal perfection is a procedure which associates, in a canonical way, a perfect field $F(L)$ of characteristic $p$ to a $p$-perfect field $L$. This construction is particularly relevant when $\text{char}(L)=0$, since it determines the first step toward the definition of a universal, Galois equivariant cover of $L$ (\cf Appendix~\ref{algebras}).

We recall that a field $L$ is said to be $p$-perfect if it is complete with respect to a non archimedean absolute value $|~|_L$, $L$ has a residue field of characteristic $p$ and the endomorphism of $\cO_L/p\cO_L$, $x\to x^p$ is surjective. Furthermore, the field $L$ is said to be strictly $p$-perfect if  $\cO_L$ is not a discrete valuation ring.

Starting with a $p$-perfect field $L$, one introduces the  set
\begin{equation}\label{F(L)}
F(L) = \{x = (x^{(n)})_{n\in\N}|~x^{(n)}\in L;~(x^{(n+1)})^p = x^{(n)}\}.
\end{equation}
If $x,y\in F(L)$, one sets
\begin{equation}\label{laws}
(x+y)^{(n)} = \lim_{m\to\infty}(x^{(n+m)}+y^{(n+m)})^{p^m};\qquad (xy)^{(n)} = x^{(n)}y^{(n)}.
\end{equation}
We recall from \cite{FF1} (\cf \S2.4) the following result
\begin{prop} Let $L$ be a $p$-perfect field. Then $F(L)$ with the above two operations is a perfect field of characteristic $p$, complete with respect to the absolute value defined by $|x|=|x^{(0)}|_L$. Moreover
if $\mathfrak a\subset\mathfrak m_L$ is a finite type (\ie principal) ideal of $\cO_L$ containing $p\cO_L$, then the map reduction mod.~$\mathfrak a$ induces an isomorphism of topological rings
\begin{equation}\label{bij}
\cO_{F(L)} \stackrel{\sim}{\longrightarrow} \varprojlim_{n\in\N}\cO_L/\mathfrak a,\qquad x=(x^{(n)})_{n\in\N}\mapsto \bar x=(x^{(n)}\text{mod.}~\mathfrak a)_{n\in\N}
\end{equation}
where the transition maps in the projective limit are given by the ring homomorphism $\bar x\to \bar x^p$.
\end{prop}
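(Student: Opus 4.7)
The strategy is to set up the elementary $p$-adic divisibility lemma, then bootstrap everything else from the isomorphism \eqref{bij}. The workhorse throughout is the following standard fact: since $p\in\mathfrak a$ and $\binom{p}{i}\equiv 0\pmod p$ for $1\le i\le p-1$,
\[
a\equiv b\pmod{\mathfrak a^k}\ \Longrightarrow\ a^{p^j}\equiv b^{p^j}\pmod{\mathfrak a^{k+j}},
\]
proved by induction on $j$ from $(a+c)^p=a^p+pc\cdot(\cdots)+c^p$. First I would use this to prove that the sum in \eqref{laws} is well defined. Setting $a_m=(x^{(n+m)}+y^{(n+m)})^{p^m}$, the identity $(x^{(n+m+1)}+y^{(n+m+1)})^p\equiv x^{(n+m)}+y^{(n+m)}\pmod{p\cO_L}$ combined with the lemma yields $a_{m+1}\equiv a_m\pmod{\mathfrak a^{m+1}}$, so $(a_m)$ is Cauchy in the complete ring $\cO_L$ (one reduces to the integral case by rescaling through powers of a uniformizer). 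Continuity of $z\mapsto z^p$ then gives $((x+y)^{(n+1)})^p=(x+y)^{(n)}$, so the sum lies in $F(L)$; the product is well defined trivially.

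Next I would construct the map of \eqref{bij}. The assignment $x\mapsto(x^{(n)}\bmod\mathfrak a)_n$ lands in the projective limit along the Frobenius transition maps, since $(x^{(n+1)})^p=x^{(n)}$. It is a ring homomorphism because, working in $\cO_L/\mathfrak a$ (where $p=0$), the limit defining the sum collapses to a componentwise sum: $(x^{(n+m)}+y^{(n+m)})^{p^m}\equiv x^{(n)}+y^{(n)}\pmod{\mathfrak a}$. Consequently all the ring axioms in $F(L)$ follow once \eqref{bij} is shown to be injective, since they hold termwise in $\varprojlim_n\cO_L/\mathfrak a$. Injectivity is immediate: if all $x^{(n)}\in\mathfrak a$, then $x^{(0)}=(x^{(n)})^{p^n}\in\mathfrak a^{p^n}$ for every $n$, and $\bigcap_n\mathfrak a^{p^n}=0$ by completeness. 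For surjectivity, given $(\bar x_n)$ in the projective limit, lift each $\bar x_n$ arbitrarily to $y_n\in\cO_L$ and set $x^{(n)}:=\lim_{k\to\infty}y_{n+k}^{p^k}$. The key lemma guarantees the Cauchyness of this sequence from $y_{n+k+1}^p\equiv y_{n+k}\pmod{\mathfrak a}$, gives independence from the choice of lifts, and yields $(x^{(n+1)})^p=x^{(n)}$ in the limit. Topological compatibility follows from $|x^{(n)}|=|x^{(0)}|^{1/p^n}$.

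With \eqref{bij} in hand, the remaining assertions are short. Characteristic $p$ of $F(L)$ is inherited from the projective limit, where $p\in\mathfrak a$. Perfection comes from the explicit inverse of Frobenius: the $p$-th power map sends $(x^{(n)})$ to $(x^{(n-1)})$ (a shift), whose inverse is $(x^{(n)})\mapsto(x^{(n+1)})$. The formula $|x|=|x^{(0)}|_L$ is multiplicative from the product law and ultrametric because
\[
|x+y|=\lim_{m\to\infty}|x^{(m)}+y^{(m)}|^{p^m}\le\max(|x^{(m)}|,|y^{(m)}|)^{p^m}=\max(|x|,|y|).
\]
$F(L)$ is a field since a nonzero $x$ has every component nonzero (from $x^{(0)}=(x^{(n)})^{p^n}$), whence $x^{-1}:=((x^{(n)})^{-1})_n$ lies in $F(L)$. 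Completeness follows because a Cauchy sequence $(x_k)$ in $F(L)$ induces a Cauchy sequence in each component (the component topology is a fixed power of the absolute value topology), with limit respecting $(z^{(n+1)})^p=z^{(n)}$ by continuity.

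The main obstacle in the whole argument is the simultaneous control of convergence and ring-axiom compatibility for the twisted sum \eqref{laws}. Handling them directly by multiply-indexed estimates is unpleasant, so the cleanest path is to reduce everything to the identification \eqref{bij}, which turns the limit gymnastics into standard facts about projective limits of characteristic $p$ rings; thus the real technical heart is the surjectivity step, namely the lift of a compatible system $(\bar x_n)$ back to $F(L)$ via the $p^k$-th power approximations.
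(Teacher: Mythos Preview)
Your proposal is correct and follows the standard approach; the paper does not actually prove this proposition but merely recalls it from \cite{FF1}, sketching only the surjectivity step (lift each $\bar x_n$ arbitrarily and take $x^{(n)}=\lim_m x_{n+m}^{p^m}$), which is precisely your construction. Two minor remarks: your phrase ``rescaling through powers of a uniformizer'' is slightly off since $\cO_L$ need not be a DVR---the correct reduction is to factor out the component of larger absolute value---and for injectivity you should note that the argument giving $x^{(0)}=0$ applies verbatim to every $x^{(m)}$.
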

In other words, the bijection \eqref{bij} allows one to transfer (uniquely) the natural (perfect) algebra structure on $\displaystyle{\varprojlim_{v\to v^p}} \cO_L/\mathfrak a$ over the inverse limit set $\cO_{F(L)}=\displaystyle{\varprojlim_{x\mapsto x^p}}\cO_{L}$ of $p$-power compatible sequences $x = (x^{(n)})_{n\ge 0}$, $x^{(n)}\in \cO_{L}$.  Indeed, one shows that for any $v = (v_n)\in \displaystyle{\varprojlim_{v\to v^p}} \cO_L/\mathfrak a$ and arbitrary lifts $x_n\in\cO_L$ of $v_n\in \cO_L/\mathfrak a$ $\forall n\ge 0$, the limit $x^{(n)}=\lim_{m\to\infty}x_{n+m}^{p^m}$ exists in $\cO_L$ $\forall n\ge 0$ and is independent of the choice of the lifts $x_n$. This lifting process is naturally multiplicative, whereas the additive structure on $\displaystyle{\varprojlim_{v\to v^p}} \cO_L/\mathfrak a$ lifts on $\cO_{F(L)}$ as  \eqref{laws}.

 \section{Witt construction and pro-infinitesimal thickening}\label{FW}

In this appendix we provide, for completeness, a proof of Proposition~\ref{propwittiso}. We recall that a pro-infinitesimal thickening of a ring $R$ (\cf \cite{F3}, \S 1.1.1 with $\Lambda=\Z$) is a surjective ring homomorphism $\theta: A\to R$, such that the ring $A$ is Hausdorff and complete for the $\Ker(\theta)$-adic topology \ie
\begin{equation}\label{projcond1}
    A=\varprojlim_n A/\Ker(\theta)^n.
\end{equation}
As a minor variant, we consider triples
 $(A,\theta,\tau)$, where $\theta:A\to R$ is a ring homomorphism with multiplicative section
$\tau:R\to A$ and condition \eqref{projcond1} holds.

A morphism from the triple $(A_1,\theta_1,\tau_1)$ to the triple $(A_2,\theta_2,\tau_2)$ is given by a ring homomorphism $\alpha: A_1\to A_2$ such that
\begin{equation}\label{morpwitt}
\tau_2=\alpha\circ\tau_1,\quad \theta_1=\theta_2\circ\alpha.
\end{equation}
Let $R$ be a perfect ring of characteristic $p$ and
let $W(R)$ be the $p$-isotypical Witt ring of $R$. Let  $\rho_R: W(R)\to R$ be the canonical homomorphism and $\tau_R:R\to W(R)$ the multiplicative section given by the \te lift.

By construction one has $\Ker(\rho_R)=pW(R)$ and condition \eqref{projcond1} holds.

We  show that for any triple
$(A,\rho,\tau)$ fulfilling \eqref{projcond1}, there exists a unique ring homomorphism from $(W(R),\rho_R,\tau_R)$ to $(A,\rho,\tau)$ (Compare with Theorem 4.2 of \cite{G} and Theorem 1.2.1 of \cite{F3}).
The ring $A$ with the sequence of ideals $\ffa_n=\Ker(\rho)^n$ fulfills the hypothesis of  \cite{S} (II, \S 4, Proposition 8). Thus it follows from \cite{S}  (II, \S 5, Proposition 10) that there exists a (unique) ring homomorphism
$\alpha:W(R)\to A$ such that $\rho\circ\alpha=\rho_R$. Moreover the uniqueness of the multiplicative section shown in \cite{S}  (II, \S 4, Proposition 8) proves that one  has
$\tau=\alpha\circ\tau_R$. This completes the proof of Proposition \ref{propwittiso}.

 Next we show  that the notion of thickening involving a multiplicative section $\tau$ is in general different from the classical notion.

  Consider $R=\Z$. Then,  for any surjective ring homomorphism $\theta: A\to R$, the map $\Z\ni n\mapsto n 1_A$ is the unique homomorphism from the pair $(\Z,id)$ to the pair $(A,\theta)$. It follows that the pair $(\Z,id)$ is the {\em universal pro-infinitesimal thickening of $\Z$}. This no longer holds when one involves the multiplicative section $\tau$.

Given a ring $R$, we consider $R$-triples $(A,\rho,\tau)$ where $\rho:A\to R$ is a ring homomorphism, $\tau:R\to A$ is a multiplicative section (\ie a morphism of monoids such that $\tau(0)=0$ and $\tau(1)=1$) and one also assumes \eqref{projcond1}.
A morphism between two triples is a ring homomorphism $\alpha: A_1\to A_2$ such that $\rho_1=\rho_2\circ \alpha$ and $\tau_2=\alpha\circ \tau_1$.

Proposition \ref{propwittiso}
shows that when $R$ is a perfect ring of characteristic $p$ there exists an initial object in the category of $R$-triples. For $R=\Z$ the triple $(\Z,id,id)$ is a $\Z$-triple but it is not the universal one. The latter is in fact obtained using the ring $\Z[[\{\delta_p\}]]\otimes(\Z\oplus \Z_2 e)$ of formal series with independent generators $\delta_p=[p]-p$, for each prime $p$ and an additional generator $e=[-1]+1$ such that $e^2=2e$. The augmentation defines a surjection $\epsilon:A\to \Z$, $\rho(e)=0$, and there exists a unique multiplicative  section $\tau, \,\tau(1)=1$, such that
\begin{equation}\label{taumap}
    \tau(p)=p+\delta_p\qqq p \ \text{prime}, \ \tau(-1)=-1+e.
\end{equation}
\begin{prop}\label{sumofuni}
The triple $(\Z[[\{\delta_p\}]]\otimes(\Z\oplus \Z_2 e),\epsilon,\tau)$ is the universal $\Z$-triple.
The map
\begin{equation}\label{mapD}
    D:\Z\to \Ker(\epsilon)/\Ker(\epsilon)^2, \ \ D(n):=\tau(n)-n
\end{equation}
fulfills the Leibnitz rule and its component on $\delta_p$ coincides with the map $\frac{\partial}{\partial p}:\Z\to \Z$ defined in \cite{KOW}.
\end{prop}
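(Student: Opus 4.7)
The plan is in three stages. First, I would check that $B := \Z[[\{\delta_p\}]] \otimes (\Z \oplus \Z_2 e)$, with the relation $e^2 = 2e$, is a bona fide $\Z$-triple. I interpret the tensor product as the $\Ker(\epsilon)$-adic completion of the polynomial ring $\Z[\{\delta_p\}, e]/(e^2 - 2e)$, where $\epsilon$ is the augmentation sending every $\delta_p$ and $e$ to $0$; condition \eqref{projcond1} is then automatic. The appearance of the $2$-adic integers in the $e$-coefficient is forced by this topology: from $2^n e = 2^{n-1} e^2 \in \Ker(\epsilon)^{n+1}$ one has $2^n e \to 0$, so the coefficient of $e$ extends naturally from $\Z$ to $\Z_2$. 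The section $\tau$ is defined by \eqref{taumap} on $\{0, \pm 1\}$ and on the primes; multiplicativity together with $\tau(1) = 1$ extends it uniquely to all of $\Z$, and the identity $(-1 + e)^2 = 1 - 2e + e^2 = 1$ ensures $\tau(-1)^2 = \tau(1) = 1$. The equality $\epsilon \circ \tau = \id_\Z$ holds on generators by inspection.

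Next, to establish the universal property, let $(A, \rho_A, \tau_A)$ be any $\Z$-triple. Define $\alpha: B \to A$ on generators by $\alpha(\delta_p) = \tau_A(p) - p$ and $\alpha(e) = \tau_A(-1) + 1$. Both images lie in $\Ker(\rho_A)$, and $\tau_A(-1)^2 = \tau_A(1) = 1$ gives $(\tau_A(-1) + 1)^2 = 2(\tau_A(-1) + 1)$, so the relation $e^2 = 2e$ is respected and $\alpha$ extends to a ring map on $\Z[\{\delta_p\}, e]/(e^2 - 2e)$. Any monomial of total degree $n$ in the generators lies in $\Ker(\epsilon)^n$, with image in $\Ker(\rho_A)^n$; combined with the completeness of $A$, this lets $\alpha$ extend uniquely to the formal completion $B$. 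The compatibilities $\rho_A \circ \alpha = \epsilon$ and $\tau_A = \alpha \circ \tau$ hold on multiplicative generators by construction, and uniqueness is forced because any morphism must send $\delta_p = \tau(p) - p \mapsto \tau_A(p) - p$ and $e = \tau(-1) + 1 \mapsto \tau_A(-1) + 1$.

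For the derivation $D$, multiplicativity of $\tau$ and the definition $D(n) = \tau(n) - n$ give
\[
D(mn) = \tau(m)\tau(n) - mn = (m + D(m))(n + D(n)) - mn = m D(n) + n D(m) + D(m) D(n),
\]
and reduction modulo $\Ker(\epsilon)^2$ kills the bilinear term $D(m) D(n)$, yielding the Leibniz rule. For the comparison with $\partial/\partial p$, write $n = \sigma \prod_p p^{v_p(n)}$ with $\sigma \in \{\pm 1\}$ and expand $\tau(n) = \tau(\sigma) \prod_p (p + \delta_p)^{v_p(n)}$ using \eqref{taumap}. Modulo $\Ker(\epsilon)^2$, the product on the right equals $|n| + \sum_p v_p(n) (|n|/p)\, \delta_p$, while $\tau(\sigma) \equiv \sigma \pmod{\Ker(\epsilon)}$ with $\tau(-1) + 1 = e$ the only $e$-contribution; multiplying out and noting that cross-terms such as $e \cdot \delta_p$ lie in $\Ker(\epsilon)^2$ yields
\[
D(n) \;\equiv\; \sum_p v_p(n) \frac{n}{p}\, \delta_p \pmod{\Ker(\epsilon)^2 + \Z \cdot e}.
\]
Projecting onto the $\delta_p$-component gives $v_p(n) \cdot n/p$ when $p \mid n$ and $0$ otherwise, matching the defining formula for $\partial n/\partial p$ in \cite{KOW}.

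I expect the main obstacle to be the first stage: correctly formalising the completed tensor product so that the $\Z_2$-factor on $e$ emerges as a consequence of $\Ker(\epsilon)$-adic completeness rather than being imposed by hand, and checking that $B$ is genuinely complete for this topology. Once the topological structure and the relation $e^2 = 2e$ are consistently aligned, the universality argument mirrors the proof of Proposition~\ref{propwittiso}, and the derivation and arithmetic-derivative identifications become direct computations.
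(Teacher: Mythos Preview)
Your proposal is correct and follows essentially the same route as the paper: construct $\alpha$ on the polynomial ring $\Z[\{\delta_p\}][e]/(e^2-2e)$ by sending the generators to $\tau_A(p)-p$ and $\tau_A(-1)+1$, extend by continuity to the $\Ker(\epsilon)$-adic completion, and derive the Leibniz rule from the identity $\tau(mn)-mn=(\tau(m)-m)n+m(\tau(n)-n)+(\tau(m)-m)(\tau(n)-n)$. You supply more detail than the paper does---in particular your explanation of why the $\Z_2$-coefficient on $e$ is forced by $2^n e=e^{n+1}\in\Ker(\epsilon)^{n+1}$, and your explicit computation of the $\delta_p$-component of $D(n)$ where the paper simply cites \cite{KOW}---but the architecture is the same.
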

\proof By construction one has $\delta_p\in \Ker(\epsilon)$  and thus $\epsilon\circ \tau=id$. Consider first the subring $\Z[\{\delta_p\}][e]$ freely generated by the $\delta_p=[p]-p$ for each prime $p$ and an additional generator $e=[-1]+1$ such that $e^2=2e$. Given a $\Z$-triple $(A,\rho,\tau)$, there exists a unique ring homomorphism
\begin{equation}\label{alpha}
   \alpha: \Z[\{\delta_p\}][e]\to A\,, \ \ \alpha(\delta_p)=\tau(p)-p\,, \ \alpha(e)=\tau(-1)+1.
\end{equation}
This ring homomorphism extends uniquely, by continuity, to a homomorphism
\begin{equation*}
    \alpha:\varprojlim_n \Z[\{\delta_p\}][e]/\Ker(\epsilon)^n\to A=\varprojlim_n A/\Ker(\rho)^n
\end{equation*}
 and this shows that $(\Z[[\{\delta_p\}]]\otimes(\Z\oplus \Z_2 e),\epsilon,\tau)$  is the universal $\Z$-triple.

 The second assertion follows from \cite{KOW} (\cf Theorem 1) and the identity
 \begin{equation*}
  [nm]-nm=([n]-n)m+n([m]-m)+([n]-n)([m]-m)\qqq n, m\in \Z.
 \end{equation*}

\section{Relevant structures in $p$-adic Hodge theory}\label{algebras}

In this appendix we shortly review  some relevant constructions in $p$-adic Hodge theory which lead to the definition of the rings of $p$-adic periods. The main references are \cite{FF} and \cite{FF1}.

We fix a non-archimedean locally compact field $K$ of characteristic zero with a {\em finite} residue field $k$ of characteristic $p$: $q = |k|$. Let $v_K$ be the (discrete) valuation of $K$ normalized by $v_K(K^*)=\Z$.

Let $F$ be  any {\em perfect} field containing $k$. We assume that $F$ is complete for a given (non-trivial) absolute value $|~|$. By $W(F)$ and $W(k)$ we denote the rings of isotypical Witt-vectors.

There exists a {\em unique} (up-to a unique isomorphism) field extension $\mathfrak E_{F,K}$ of $K$, complete with respect to a {\em discrete} valuation $v$ extending $v_K$ such that:\vspace{.05in}

$\bullet$~$v(\mathfrak E_{F,K}^*)=v_K(K^*)=\Z$\vspace{.05in}

$\bullet$~$F$ is the residue field of $\mathfrak E_{F,K}$.\vspace{.05in}

%\fixme{Analogy with $\C\otimes_\R B_\infty^+$}
One sees that $\mathfrak E_{F,K}$ can be identified with $K\otimes_{W(k)}W(F)$. Thus, if $\pi$ is a {\em chosen} uniformizing parameter of $K$, then an element of $\mathfrak E_{F,K}$ can be written {\em uniquely} as $\mathfrak e=\sum_{n\gg-\infty}[a_n]\pi^n$, $a_n\in F$. In particular $\mathfrak e\in K$ if and only if $a_n\in k$ $\forall n$.

Let $\cO_{\mathfrak E_{F,K}}$ be the (discrete) valuation ring of $\mathfrak E_{F,K}$. Each element of the ring $\cO_{\mathfrak E_{F,K}}$ can be written {\em uniquely} as $\sum_{n\ge 0}[a_n]\pi^n$, $a_n\in F$. The projection $\cO_{\mathfrak E_{F,K}}\twoheadrightarrow F$ has a unique multiplicative section \ie the \te map $a\mapsto [a]=1\otimes (a,0,0,\ldots,0,\ldots)$.

 There is a {\em universal} (local) subring $W_{\cO_K}(\cO_F)\subset\cO_{\mathfrak E_{F,K}}$ which describes the {\em unique} $\pi$-adic  torsion-free lifting of the perfect $\cO_K$-algebra $\cO_F$.
 If $K_0$ denotes the maximal unramified extension of $\Q_p$ inside $K$,  there is a {\em canonical} isomorphism: $\cO_K \otimes_{\cO_{K_0}}W(\cO_F)\stackrel{\sim}{\longrightarrow}W_{\cO_K}(\cO_F)$,  $1\otimes[a]_F\mapsto [a]$.

 If $A$ is any separated and complete $\pi$-adic $\cO_K$-algebra with field of fractions $L$ and $F=F(L)$ (\cf Appendix~\ref{FW} for notation), there is a ring homomorphism
\begin{equation}\label{thetahom}
\theta: W_{\cO_K}(\cO_{F(L)})\longrightarrow A,\qquad \sum_{n\ge 0}[x_n]\pi^n \mapsto \sum_{n\ge 0}x_n^{(0)}\pi^n.
\end{equation}
In the particular case of the algebra $A=\cO_F=\cO_{F(\mathbf C_K)}$ ($\mathbf C_K=$ completion of a fixed algebraic closure of $K$), the surjective ring homomorphism $\theta_0: \cO_F \twoheadrightarrow \cO_{\mathbf C_K}/(p)$, $\theta_0((x^{(n)})_{n\ge 0})=x^{(0)}$ lifts to a surjective ring homomorphism of $\cO_K$-algebras
\begin{equation}\label{the}
\theta: W_{\cO_K}(\cO_{F(\mathbf C_K)}) \twoheadrightarrow \cO_{\mathbf C_K}, \qquad \sum_{n\ge 0}[x_n]\pi^n \mapsto \sum_{n\ge 0}x_n^{(0)}\pi^n
\end{equation}
which is {\em independent} of the choice of the uniformizer $\pi$.\vspace{.05in}

The valued field $(\mathfrak E_{F,K}, |\cdot|)$ ($|\cdot|$ non discrete) contains two further sub-$\mathcal O_K$-algebras which are also independent of the choice of a uniformizer $\pi\in\cO_K$. They are
\begin{equation}\label{Bb}
\wittOovpi:=W_{\cO_K}(\cO_F)[\frac{1}{\pi}]=\{x=\sum_{n\gg-\infty}[x_n]\pi^n\in \mathfrak E_{F,K}|x_n\in\cO_F,\forall n\}
\end{equation}
and if $a\in\mathfrak m_F\setminus\{0\}\subset\cO_F$, the ring
$\inverta:=\wittOovpi[\frac{1}{[a]}]$
which can be equivalently described as
\begin{equation}\label{B+}
\inverta =B^b_{F,K}=\{f=\sum_{n\gg -\infty}[x_n]\pi^n\in \mathfrak E_{F,K}|\exists C>0, |x_n|\le C,\forall n\}.
\end{equation}
If a $p$-perfect field $L$ contains $K$ as a closed subfield, the ring homomorphism \eqref{thetahom} extends to a surjective homomorphism of $K$-algebras
\begin{equation}\label{teta}
\theta: B^b_{F(L),K} \to L,\qquad \theta(\sum_{n\gg-\infty}[x_n]\pi^n) = \sum_{n\gg-\infty}x_n^{(0)}\pi^n
\end{equation}
which is independent of the choice of $\pi$. If moreover $L$ is a strictly $p$-perfect field, then $|F(L)| = |L|$ and the kernel of the map $\theta$ in \eqref{teta} is a prime ideal of $B^b_{F(L),K}$ of degree one. One has
\[
\theta(B^{b,+}_{F(L),K})=L\quad\text{and}\quad \theta(W_{\cO_K}(\cO_{F(L)}))=\cO_L.
\]

Let $\mathfrak E_0=\mathfrak E_{k_F,K}$. Then the projection $\cO_F\to k_F$, $x\to \bar x$ induces an augmentation map
\begin{equation}\label{aug}
\varepsilon: B^{b,+} \to \mathfrak E_0,\qquad \varepsilon(\sum_{n\gg-\infty}[x_n]\pi^n)=\sum_{n\gg-\infty}[\bar x_n]\pi^n
\end{equation}
with $\varepsilon(W_{\cO_K}(\cO_F)) = \cO_{\mathfrak E_0}$. $E=\{\sum_{n\gg-\infty}[x_n]\pi^n|~x_n\in k_F,\forall n\}$ is a local sub-field of $\wittOovpi$.\vspace{.1in}

One introduces for $r\in\R_{\ge 0}$ the family of valuations on $\wittOovpi$:
 \[
 x=\sum_{n\gg-\infty}[x_n]\pi^n,\qquad v_r(x) = \text{inf}_{n\in\Z}\{v(x_n)+nr\}\in\R\cup\{+\infty\}
 \]
 and defines $\rigid$ as the completion of $\wittOovpi$ for the family of norms
$(q^{-v_r})_{r>0}$ ($q = |k|$), $r\in v(F)$.

An equivalent definition of these multiplicative norms  is given as follows: for $\R\ni\rho\in[0,1]$ one defines
\begin{align}\label{norms}
|x|_\rho &= \displaystyle{\max_{n\in\Z}}|x_n|\rho^n\\
|x|_0 &= q^{-r},~\text{$r$ smallest integer, $x_r\neq 0$};\qquad |x|_1 =\displaystyle{\sup_{n\in\Z}}|x_n|.\notag
\end{align}
In view of the description of $\inverta$ given in \eqref{B+}, the norms \eqref{norms} are well-defined on the larger ring $\inverta$. The completion $B$ of $\inverta$ for these norms, as $\rho\in(0,1)$, contains $\rigid[\frac{1}{[a]}]$ for any chosen $a\in\mathfrak m_F\setminus\{0\}$.\newline $B$ is the analogue in mixed characteristics of the ring of rigid analytic functions on the punctured unit disk in equal characteristics.

The subalgebra $\rigid\subset B$ is characterized by the condition
\begin{equation}\label{bsub}
    \rigid=\{ b\in B\mid |b|_1\leq 1\}.
\end{equation}
The  extension of  rings $\rigid\subset \rigid[\frac{1}{[a]}]$ gives a perfect control of the divisibility as explained in \cite{FF} Theorem 6.55.

A remarkable property of this construction is that the Frobenius endomorphism on $\wittOovpi$
\[
\varphi: \wittOovpi \to \wittOovpi,\quad \varphi(\sum_{n\gg -\infty}[x_n]\pi^n) = \sum_{n\gg -\infty}[x_n^q]\pi^n
\]
extends to a Frobenius {\em automorphism} on $\inverta$ and on $\rigid$ thus to a {\em continuous} Frobenius {\em automorphism}  $\varphi: B\stackrel{\sim}{\longrightarrow} B$ (\ie the unique $K$-automorphism which induces $x\mapsto x^q$ on $F$) which satisfies $|\varphi(f)|_{\rho^q} = (|f|_\rho)^q$, $\forall \rho\in(0,1)$.

The homomorphism \eqref{teta} (in particular for $L = \mathbf C_K$) extends to a {\em canonical continuous} universal (Galois equivariant) cover of $L$
\[
\theta: B \twoheadrightarrow L.
\]

\section{Archimedean Witt construction for semi-rings}\label{tropical}

In this appendix we explain the connection of the above construction with the archimedean analogue of the Witt construction in the framework of perfect semi-rings of characteristic one (\cite{CC1} and \cite{C}).
Given a multiplicative cancellative perfect semi-ring $R$ of characteristic $1$, one keeps the same multiplication but deforms the addition into the following operation \begin{equation}\label{pertop}
    x+_wy=\sum_{\alpha\in I}w(\alpha)x^\alpha y^{1-\alpha}\,,  \ I=(0,1)\cap \Q,
    \end{equation}
    which is commutative provided
$
w(1-\alpha)=w(\alpha),\ \forall\alpha\in I,
$
 and associative provided that the following equation holds
\begin{equation}\label{funequa}
    w(\alpha)w(\beta)^\alpha=w(\alpha \beta)w(\gamma)^{(1-\alpha \beta)}\,, \ \ \gamma=\frac{\alpha(1-\beta)}{1-\alpha \beta}\qquad\forall\alpha,\beta\in I.
\end{equation}
By applying Theorem~5.4 in \cite{C}, one sees that the positive symmetric solutions to  \eqref{funequa} are parameterized by $\rho\in R$, $\rho>1$, and they are given by the following formula involving the entropy $S$,
\begin{equation}\label{23n}
w(\alpha)=\rho^{ S(\alpha)},\qquad S(\alpha) = -\alpha\log(\alpha)-(1-\alpha)\log(1-\alpha)\qqq\alpha\in I.
\end{equation}
We apply this result to the semi-field $\rmax$ of tropical geometry.
We write the elements $\rho\in \rmax$, $\rho> 1$, in the convenient form $\rho=e^\varh$ for some $\varh>0$. In this way, we can view $w(\alpha)$ as the function of $\varh$ given by
\begin{equation}\label{walphat}
  w_\varh(\alpha)= w(\alpha,\varh)=e^{\varh S(\alpha)}\qquad\forall\alpha\in I\,.
\end{equation}
By performing a direct computation one obtains for $x,y>0$ that the perturbed sum $x+_{w_\varh}y$ is given by
\begin{equation}\label{entrop2}
  x+_{w_\varh}y=\left(x^{1/\varh}+y^{1/\varh}\right)^\varh\,.
\end{equation}
The formula \eqref{entrop2} shows that the sum of two elements of $\rmax$, computed by using $w_\varh$, is a function which depends explicitly on the variable $\varh$.
The functions $[x](\varh)=x$ (for $x\in \rmax$) which are constant in $\varh$ describe the \te lifts. The sum of  such functions is no longer constant in $\varh$.
In particular one can compute the sum of $n$ constant functions all equal to $1$:
\begin{equation}\label{1ntimes}
    1+_{w_\varh} 1+_{w_\varh}\cdots +_{w_\varh}1=n^\varh
\end{equation}
which shows that the sum of $n$ terms equal to the unit of the structure is given by the function of the variable $\varh$: $\varh\mapsto n^\varh$.
\begin{prop} \label{chirepprop}
The following map $\chi$ is a homomorphism from the semi-ring $R$ generated by the functions $\varh\mapsto \alpha^\varh$, $\alpha\in \Q_+$, and the \te lifts to the algebra of real-valued functions from $(0,\infty) $ to $\R_+$ with pointwise sum and product
\begin{equation}\label{chimap}
    \chi(f)(\varh)=f(\varh)^{1/\varh}\qqq \varh>0.
\end{equation}
The range of the map $\chi$ is the semi-ring of  finite linear combinations, with positive rational coefficients, of \te lifts of elements $x\in \rmax$ given in the $\chi$-representation by
\begin{equation}\label{teichrep}
    \chi([x])(\varh)=x^{1/\varh}\qqq \varh>0\qqq x\in \rmax
\end{equation}
The following defines a homomorphism from $R$ to $W_\Q(\trop)$,
\begin{equation}\label{mapback}
    f\mapsto \beta(f), \ \ \beta(f)(z)=\chi(f)(\frac 1z)\,.
\end{equation}
\end{prop}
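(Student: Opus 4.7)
\proof
The plan is to reduce all three assertions to the single identity~\eqref{entrop2}, which says that the deformed sum $+_{w_\varh}$ in characteristic one becomes ordinary addition after the change of variable $x\mapsto x^{1/\varh}$; this is precisely the operation performed by $\chi$.

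First I would verify that $\chi$ is a semi-ring homomorphism from $R$, equipped with pointwise product and the deformed sum $+_{w_\varh}$, to the algebra of positive real functions on $(0,\infty)$ with pointwise operations. Multiplicativity is immediate since $\chi(fg)(\varh) = (f(\varh)g(\varh))^{1/\varh} = \chi(f)(\varh)\chi(g)(\varh)$. Additivity follows by raising both sides of~\eqref{entrop2}, applied pointwise in $\varh$, to the power $1/\varh$: one obtains $\chi(f+_{w_\varh} g)(\varh) = f(\varh)^{1/\varh} + g(\varh)^{1/\varh} = \chi(f)(\varh) + \chi(g)(\varh)$.

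Next I would compute $\chi$ on the two families of generators: $\chi([x])(\varh) = x^{1/\varh}$ for $x \in \rmax$ (the \te lifts), and $\chi(\alpha^\varh)(\varh) = \alpha$ for $\alpha \in \Q_+$. Since $\bigl(\sum_i \alpha_i x_i^{1/\varh}\bigr)\bigl(\sum_j \beta_j y_j^{1/\varh}\bigr) = \sum_{i,j} \alpha_i\beta_j (x_i y_j)^{1/\varh}$, the image under pointwise operations is stable under both sum and product, hence the range of $\chi$ is exactly the semi-ring of finite $\Q_+$-linear combinations of the $\chi$-representatives $\chi([x])$, as claimed.

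Finally, for $\beta$: the substitution $\varh = 1/z$ is an isomorphism between the pointwise algebras of functions of $\varh$ and of $z$, so $\beta$ remains a semi-ring homomorphism into its image. By the previous step one has $\beta(f)(z) = \sum_i \alpha_i x_i^z$ for suitable $\alpha_i\in\Q_+$ and $x_i \in \R_+^\times$, which is precisely $\Phi\bigl(\sum_i \alpha_i\tau_W(x_i)\bigr)(z)$ for the field embedding $\Phi: W \hookrightarrow \mathcal M(\C)$ introduced in~\eqref{8}. Injectivity of $\Phi$ (it is a field homomorphism, and distinct $x_i\in \R_+^\times$ yield asymptotically distinct exponentials as $z\to +\infty$) promotes $\beta(f)$ to a well-defined element of $\Q_+[\R_+^\times]\subset W_\Q(\trop)$, and the homomorphism property transports through $\Phi^{-1}$. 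The main conceptual point, rather than a serious technical obstacle, is that $\chi$ identifies the characteristic one deformation $+_{w_\varh}$ with genuine addition; once this identification is secured, the rest reduces to bookkeeping via the injective evaluation map $\Phi$ from Section~\ref{sec:2ndsection}.
\endproof
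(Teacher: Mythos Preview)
Your argument is correct and is precisely the ``straightforward'' verification to which the paper alludes (its own proof reads, in full, ``The proof is straightforward using \cite{C}''). You have made explicit the two points that do the work: first, that the identity~\eqref{entrop2} turns the deformed addition $+_{w_\varh}$ into ordinary pointwise addition after applying $\chi$; second, that the resulting functions $z\mapsto\sum_i\alpha_i x_i^z$ are identified with elements of $W_\Q(\trop)=\Q[\R_+^\times]$ via the injective evaluation homomorphism $\Phi$ of~\eqref{8}. There is no genuine difference in approach---you have simply unpacked what the paper leaves implicit.
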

\proof
The proof is straightforward using \cite{C}.\endproof

Thus \eqref{mapback} gives the translation from the framework of \cite{CC1} and \cite{C} to the framework of the present paper.

\end{document}